\newtheorem{theo}{Theorem}[section]
\newtheorem{coro}[theo]{Corollary}
\newtheorem{prop}[theo]{Proposition}
\newtheorem{lemm}[theo]{Lemma}
\newtheorem{claim}[theo]{Claim}
\newtheorem{teo}[theo]{Theorem}
\newtheorem{add}[theo]{Addendum}
\newtheorem{sch}[theo]{Scholium}
\newtheorem{quest}[theo]{Question}
\newtheorem{problem}[theo]{Problem}
\theoremstyle{definition}
\newtheorem{rema}[theo]{Remark}
\newtheorem{defi}[theo]{Definition}
\numberwithin{equation}{section}
   \def\DD{{\mathbb D}}
 \def\RR{{\mathbb R}}  \def\TT{{\mathbb T}}
 \def\ZZ{{\mathbb Z}}
  \def\cG{\mathcal{G}} \def\cM{\mathcal{M}} 
  \def\cH{\mathcal{H}}  \def\cT{\mathcal{T}}
\def\cC{\mathcal{C}}    \def\cU{\mathcal{U}}
   \def\cP{\mathcal{P}} 
\def\cF{\mathcal{F}}
\newcommand{\Ecs}{E^{cs}}
\newcommand{\Euu}{E^{uu}}
\newcommand{\Ess}{E^{ss}}
\newcommand{\Ecu}{E^{cu}}
\newcommand{\inv}{^{-1}}
\newcommand{\subof}{\subset}
\newcommand{\lam}{\lambda}
\newcommand{\gam}{\gamma}
\newcommand{\Coneuu}{\cC^{uu}}
\newcommand{\Conecs}{\cC^{cs}}
\newcommand{\qandq}{\quad \text{and} \quad}
\newcommand{\eps}{\varepsilon}
\newcommand{\ie}{{\it i.e., } }
\newcommand{\eg}{{\it e.g., } }
\renewcommand{\xrightarrow}[1][1]{\underset{#1}\rightsquigarrow}  
\author[C. Bonatti]{Christian Bonatti} 
\address{Institut de Math. de Bourgogne CNRS - URM 5584, Universit\'e de Bourgogne Dijon 21004, France}
\author[A. Gogolev]{Andrey Gogolev} 
\address{The Ohio State University, Columbus, OH 43210, USA}
\author[A. Hammerlindl]{Andy Hammerlindl}
\address{ School of Mathematics, Monash University, Victoria, Australia}
\author[R. Potrie]{Rafael Potrie}
\address{CMAT, Facultad de Ciencias, Universidad de la Rep\'ublica, Uruguay}
\curraddr{Institute for Advanced Study, Princeton, NJ 08540, USA}
\title[Anomalous partially hyperbolic examples III]{Anomalous partially hyperbolic diffeomorphisms III: abundance and incoherence}
\thanks{C.B. was partially supported by IFUM. A.G. was partially supported by NSF grant DMS-1266282 and Simons grant 427063; R.P. was partially supported by CSIC group 618.
This research was partially supported by the Australian Research Council}
\begin{document}
\maketitle

\begin{abstract}
Let $M$ be a closed 3-manifold which admits an Anosov flow. In this paper we develop a technique for constructing partially hyperbolic representatives in many mapping classes of $M$.
We apply this technique both in the setting of geodesic flows on closed hyperbolic surfaces and for Anosov flows which admit transverse tori.
We emphasize the similarity of both constructions through the concept of \emph{$h$-transversality}, a tool which allows us to compose different mapping classes while retaining
partial hyperbolicity.

In the case of the geodesic flow of a closed  hyperbolic surface $S$ we build stably ergodic, partially hyperbolic diffeomorphisms whose mapping classes form  a subgroup of
the mapping class group $\cM(T^1S)$ which is isomorphic to  $\cM(S)$. At the same time we show that the totality of mapping classes which can be realized by partially hyperbolic diffeomorphisms does not form a subgroup of $\cM(T^1S)$.  

Finally, some of the examples on $T^1S$ are absolutely partially hyperbolic, stably ergodic and robustly non-dynamically coherent, disproving a conjecture in 
\cite{HHU2}.

{ \medskip \noindent \textbf{Keywords:} partially hyperbolic
diffeomorphisms, 3-manifolds, classification, stable
ergodicity, dynamical coherence. 

\noindent \textbf{2010 Mathematics Subject Classification:}
Primary:  37D30 ,37C15 }
\end{abstract}

\section{Introduction}

A diffeomorphism $f\colon M \to M$ of a closed Riemannian manifold is \emph{partially hyperbolic} 
if the tangent bundle $TM$ splits into three (non-trivial) $Df$-invariant continuous subbundles 
$TM= E^{ss}\oplus E^c \oplus E^{uu}$ such that for some $\ell>0$, for all $x \in M$ and all unit vectors $v^\sigma \in E^{\sigma}(x)$ ($\sigma=ss,c,uu$) one has 
$$ \|Df^\ell v^{ss} \| < \min \{1, \|Df^\ell v^c\|\} \leq
\max \{1, \|Df^\ell v^c\| \} < \|Df^\ell v^{uu}\|.
$$

Sometimes, one uses the stronger notion of \emph{absolute partial hyperbolicity}. 
This means that $f$ is partially hyperbolic and there exist constants $\lambda_1 < 1 < \lambda_2$ such that

$$ \|Df^\ell v^{ss} \| < \lambda_1<  \|Df^\ell v^c\| < \lambda_2 < \|Df^\ell v^{uu} \|. $$
\medskip
The subbundles $E^{ss}$, $E^c$ and $E^{uu}$ depend on $f$ and we will indicate this, when needed, by using a subscript, \eg $E^{ss}_f$.

Partially hyperbolic diffeomorphisms appear naturally in many contexts and provide natural classes to study phenomena 
such as robust transitivity and stable ergodicity (see \eg \cite{BDV,Wilkinson,CHHU,HP-survey}). For a long time, 
the known examples in dimension three were rather restrictive, and efforts were made to try to show that the behavior seen in that restricted family of 
examples was indeed general: such results hold, for instance, on manifolds with solvable fundamental group (see \cite{HP}). 
Recently new examples have started to appear (\cite{HHU2,BPP,BGP}).

In this paper, we consider a closed $3$-manifold and look for mapping classes  
(\ie the diffeomorphisms up to homotopy) which contain 
partially hyperbolic representatives. In fact, the constructions we present here all start with an Anosov flow, 
and therefore we consider $3$-manifolds which support Anosov flows. For some of the new examples we study specific properties of 
the dynamics and geometry that makes them different from the previously known behaviors (see Theorem~\ref{t.incoh} below).  

A general fact about a $3$-manifold $M$ is that its mapping class group $\cM(M)\colon=\pi_0(\mathrm{Diff}^1_+(M))$ is strongly 
related to the group generated by the Dehn twists associated
to incompressible tori (see~\cite{Johannson}). We will consider two cases which correspond to different positions of the tori
with respect  to the Anosov flow. 

In Theorem~\ref{t.transverse}, the tori will be transverse to the flow. 
In Theorem~\ref{t.geodesic} the Anosov flow will be a geodesic flow and we will perform Dehn twist along tori over closed geodesics: 
these tori are not transverse to the geodesic flow, but they are \emph{Birkhoff sections} for the flow.

These two cases already appear in \cite{BGP}.
The novelty here is that we are able to compose an arbitrary number of such 
Dehn twists, and that this involves a conceptual understanding of the mechanism underlying the examples in \cite{BPP,BGP}. Note that a priori the mapping classes which admit partially hyperbolic representatives do not form a subgroup of $\cM(M)$ (\eg when $M= \TT^3$, it is not a subgroup). In both settings described above, this paper exhibits infinite 
subgroups of $\cM(M)$ such that each element admits a partially hyperbolic representative.


In recent years, much of the study of partially hyperbolic systems 
has focused on questions of robust transitivity and stable ergodicity.
When analyzing such systems,
an important first property to establish
is whether or not there exists
an invariant foliation tangent to the center direction.
Indeed, a long-standing open question, recently answered by Rodriguez Hertz,
Rodriguez Hertz and Ures was whether a partially hyperbolic system
with one-dimensional center necessarily had a center foliation (\cite{HHU2}).
They showed this was not the case by constructing a counterexample on the
3-torus.
This surprising and important result introduced new techniques
of constructing partially hyperbolic example in the non-transitive setting.
Their construction fundamentally relies on having an embedded 2-torus
tangent to the center-stable direction along which the dynamical incoherence occurs.
As this torus is normally attracting,
it cannot be used in the construction of a stably ergodic
or robustly transitive example.
Further, they conjectured (\cite{HHU2,CHHU}) that for transitive, partially hyperbolic systems
in dimension three, invariant center foliation must always exist.

In fact, this conjecture is false as a consequence of some of the new examples we present in this paper. Our results imply the following:


 \begin{displayquote}
     \emph{ There is a $C^1$-open set of partially hyperbolic diffeomorphisms which are both transitive and dynamically incoherent.}
  \end{displayquote}

These examples can also be made conservative and stably ergodic.
See  Theorem \ref{t.incoh} for a precise statement.
These systems do not have center-stable or center-unstable foliations.
However, they do possess unique invariant branching foliations
as defined by Burago and Ivanov \cite{BI}.
Because of this,
we may discuss the branching center foliation
defined by intersecting the center-stable and center-unstable branching
folations. 

As a consequence of transitivity,
this ``branching'' occurs everywhere.
In particular,
for any non-empty open subset $U$ of $M,$
one can find distinct center leaves $L_1$ and $L_2$
such that $L_1 \cap L_2 \cap U$ is non-empty. 

Despite the branching,
these center leaves are comparable to the orbits of the Anosov geodesic flow 
defined on $M;$
there is a bijective correspondence between the leaves of the branching
center foliation and the orbits of the flow.
In particular, compact center leaves are associated to periodic orbits of the
flow.
However, the partially hyperbolic dynamics on these leaves
behaves very differently from the flow.

\begin{displayquote}
\emph{There is a $C^1$open set of partially hyperbolic diffeomorphisms such that the union of circles tangent to $E^c$ is dense in $M$, but none of these circles is periodic. }
\end{displayquote}

In light of this result,
any center leaf which has a periodic point must be a  line.
In previously-known examples,
the dynamics on such periodic center lines is either
contracting on large scales or expanding on large scale 
as depicted in Figure~\ref{fig:arrows} (a) and (b).
The new systems studied in the current paper
have periodic center lines which are contracting on one end
and expanding on the other, as shown in Figure \ref{fig:arrows}~(c).
This large-scale behaviour of the leaves is a key tool in proving
dynamical incoherence. 

\begin{figure}[t]
\vspace{0.5cm}
\begin{center}
\includegraphics[scale=1.0]{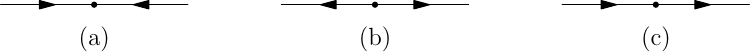}
\end{center}
\vspace{0.2cm}
\caption{\small{Possible dynamics on invariant center curves.
    Previous examples, such as on the 3-torus,
    have large scale dynamics as in (a) and (b).
    Examples in the current paper also have dynamics as shown in (c). Notice that there may be more than one fixed point in the leaves, but they should all lie in a compact interval.}\label{fig:arrows}}
\end{figure}
We also point out, that while in ``small manifolds'' a stronger notion called \emph{absolute} partial hyperbolicity is enough to guarantee dynamical coherence (see \cite{BBI-coherence,HP}) our new examples are absolutely partially hyperbolic, thus producing the first such examples of dynamical incoherence in the three-dimensional setting.

We begin by statings our result in the geometric setting.  


\subsection{Partially hyperbolic diffeomorphisms on the unit tangent bundle of a hyperbolic surface}

Let $S$ denote the surface of genus $g\ge2$, and let $T^1S$ denote its unit tangent bundle (equivalently, the circle bundle over $S$ whose Euler class is $2-2g$,  
the Euler characteristic of $S$).  Let $\cM(S)$ denote the group of homotopy (or equivalently, isotopy) classes of orientation preserving diffeomorphisms of 
$S$; in other words, 
$\cM(S)=\pi_0(\mathrm{Diff}_+(S))$.  

Given a diffeomorphism $\varphi: S \to S$, its normalized differential $T\varphi(v) = \frac{D\varphi v}{\|D\varphi v\|}$ is a diffeomorphism of $T^1S$ and so $\mathrm{Diff}_+(S)$ acts on the unit tangent bundle $T^1S$, yielding an injective homomorphism 
$\iota\colon\cM(S)\hookrightarrow \cM(T^1S)$ (cf. \cite[Proposition 25.3]{Johannson}). 

The main theorem in this setting is the following.

\begin{theo}\label{t.geodesic} For any $\varphi\in \cM(S)$ there exists a diffeomorphism $f\colon T^1S\to T^1S$  such that:
\begin{itemize}
 \item $f$ belongs to $\iota(\varphi)$,
 \item $f$ is absolutely partially hyperbolic, 
 \item $f$ preserves the volume and is stably ergodic.
\end{itemize}
\end{theo}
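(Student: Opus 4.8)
The plan is to reduce to a finite word of ``Dehn twist'' diffeomorphisms adapted to the geodesic flow, to compose them using the $h$-transversality mechanism, and then to switch on a large time of the geodesic flow to obtain an absolutely partially hyperbolic, volume-preserving representative; stable ergodicity will come from the Burns--Wilkinson criterion. Fix a hyperbolic metric on $S$ and let $g_t\colon T^1S\to T^1S$ be its geodesic flow: it is absolutely partially hyperbolic with one-dimensional center $E^c$ tangent to the flow, and it preserves the Liouville volume $m$. By the Dehn--Lickorish theorem, $\cM(S)$ is generated by Dehn twists $\tau_{\gam_1},\dots,\tau_{\gam_N}$ along a fixed finite family of simple closed curves, which we take to be closed geodesics; write $\varphi=\tau_{\gam_{i_1}}^{\eps_1}\circ\cdots\circ\tau_{\gam_{i_k}}^{\eps_k}$ with $\eps_j=\pm1$. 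Since $\iota\colon\cM(S)\hookrightarrow\cM(T^1S)$ is a homomorphism, it suffices to understand the blocks $\iota(\tau_\gam^{\pm1})$ together with their composition.

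For a simple closed geodesic $\gam$, the torus $\TT_\gam:=\pi\inv(\gam)\subset T^1S$ (with $\pi\colon T^1S\to S$) is a Birkhoff section for $g_t$. As in \cite{BGP}, the diffeomorphism $T\tau_\gam\in\iota(\tau_\gam)$ is isotopic to a diffeomorphism $\Psi_\gam$ supported in an arbitrarily thin tubular neighborhood of $\TT_\gam$, acting there as a Dehn twist along $\TT_\gam$ in the circle-fiber direction of $T^1S\to S$; choosing the local coordinates compatibly with $m$ we may take $\Psi_\gam$ to preserve $m$. The decisive point, to be established in the body of the paper, is that each $\Psi_\gam^{\pm1}$ is \emph{$h$-transverse} to the invariant cone family of $g_t$: it sends the center-unstable cone around $\Ecu=E^c\oplus\Euu$ into itself and its inverse sends the center-stable cone around $\Ecs=E^c\oplus\Ess$ into itself, with quantitative estimates that are preserved under composition. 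Setting $\Psi:=\Psi_{\gam_{i_1}}^{\eps_1}\circ\cdots\circ\Psi_{\gam_{i_k}}^{\eps_k}$, which represents $\iota(\varphi)$, we get that $\Psi$ is again $h$-transverse to the cone family of $g_t$; note that here the tori $\TT_{\gam_{i_j}}$ need \emph{not} be pairwise disjoint, which is exactly the novelty over \cite{BPP,BGP}.

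Now fix $T>0$ large and put $f:=g_T\circ\Psi$. Then $f$ is homotopic to $\Psi$, because $\{g_t\}_{t\in[0,T]}$ joins $g_T$ to the identity inside $\Diff_+(T^1S)$, so $f\in\iota(\varphi)$; and $f$ preserves $m$. For $T$ large, $g_T$ strictly contracts the center-unstable cone and $g_{-T}$ strictly contracts the center-stable cone, while $\Psi^{\pm1}$ only distort them boundedly and $h$-transversely; hence $f$ strictly preserves a center-unstable cone and $f\inv$ strictly preserves a center-stable cone, giving a dominated splitting $\Ess_f\oplus E^c_f\oplus\Euu_f$ with $E^c_f$ one-dimensional. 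Comparing rates (the stable/unstable rates of $f$ are comparable to $e^{\mp cT}$ times the bounded distortion of $\Psi$, while the center rate stays bounded between two constants independent of $T$), one checks that $f$ is in fact \emph{absolutely} partially hyperbolic. This establishes the first two bullets and volume preservation.

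For stable ergodicity, note $f$ is $C^\infty$, $m$-preserving, absolutely partially hyperbolic with one-dimensional center, and center bunched (the stable rate is as small as we wish while the center rate is bounded away from $0$). It remains to get accessibility: away from the thin supports of the $\Psi_\gam$ the strong stable and strong unstable foliations of $f$ coincide with the horocyclic foliations of $g_t$, so the standard spreading argument for the geodesic flow still applies and yields the (robust) accessibility property; alternatively one replaces $f$ by a $C^\infty$-small conservative perturbation --- still absolutely partially hyperbolic, center bunched, and, being nearby, in the same mapping class $\iota(\varphi)$ --- for which accessibility holds by $C^1$-density of accessibility. Either way the Burns--Wilkinson criterion shows $f$ is ergodic, and the hypotheses persist $C^1$-robustly, so $f$ is stably ergodic. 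The main obstacle is the material of the second paragraph: isolating the correct notion of $h$-transversality, verifying it for the geodesic-flow Dehn twists, and proving it is closed under composition and strong enough to force absolute partial hyperbolicity of $g_T\circ\Psi$ for an arbitrary word $\Psi$ with no disjointness assumption on the underlying geodesics; verifying robust accessibility of the twisted examples is a secondary technical point.
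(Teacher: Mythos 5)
There is a genuine gap in the second paragraph, at the step where you compose all the Dehn twists first and only afterwards apply one long flow time. You assert that each $\Psi_{\gamma}^{\pm1}$ ``sends the center-unstable cone around $E^{c}\oplus E^{uu}$ into itself'' and that therefore the full word $\Psi=\Psi_{\gamma_{i_1}}^{\eps_1}\circ\cdots\circ\Psi_{\gamma_{i_k}}^{\eps_k}$ still has this property, so that $f=g_T\circ\Psi$ works. But $h$-transversality is a much weaker statement than cone preservation: it only says that $D\Psi_\gamma(E^{uu})$ is \emph{not contained in} $E^{cs}$ (and dually for $E^{ss}$). A fiberwise Dehn twist shears $E^{uu}$ far outside any prescribed narrow cone around $E^{uu}$ --- the image may be close to the fiber direction --- and it is only after flowing for a long time that this image is pulled back into a narrow unstable cone. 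Consequently $h$-transversality is \emph{not} closed under composition of the twists alone: $D(\Psi_2\circ\Psi_1)(E^{uu})=D\Psi_2\bigl(D\Psi_1(E^{uu})\bigr)$, and the hypothesis on $\Psi_2$ controls only $D\Psi_2(E^{uu})$, not $D\Psi_2$ applied to an arbitrary line transverse to $E^{cs}$; the composed image may well land inside $E^{cs}$. This is exactly why the paper's transitivity lemma for $h$-transversality requires inserting large powers $f_i^{n_i}$ of the time-one maps \emph{between} consecutive twists, and why the final diffeomorphism has the interleaved form $f_1^m\circ g_\ell\circ f_\ell^m\circ h_\ell\circ\cdots\circ h_2\circ f_2^m\circ g_1\circ f_1^m\circ h_1$ rather than $g_T\circ(h_\ell\circ\cdots\circ h_1)$. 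Your $f=g_T\circ\Psi$ need not be partially hyperbolic for any $T$.

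Two secondary points. First, the input from \cite{BGP} produces, for each curve $\gamma_i$, a hyperbolic metric \emph{adapted to} $\gamma_i$ together with the twist $h_i$; working with a single fixed metric for all curves requires an extra argument (connectedness of Teichm\"uller space plus Proposition~\ref{p.h-transverse}) to pass between the flows, which is part of the machinery you are skipping. Second, in the ergodicity discussion the claim that the strong foliations of $f$ coincide with the horocyclic foliations away from the supports of the twists is false (these are global objects); however, your fallback of perturbing to achieve accessibility and invoking the standard stable ergodicity criteria is in line with what the paper does, so this is a minor issue compared with the compositional gap above.
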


In section \ref{ss.obstruction} we use the results from \cite{HaPS} to see that not every mapping class of $T^1S$ can be realised and moreover, that the set of mapping classes which are realizable do not form a subgroup of $\cM(T^1S)$. 

\subsection{Dynamical incoherence}

For the examples given by Theorem~\ref{t.geodesic} applied to a \emph{pseudo-Anosov} mapping class $\varphi$   (see \eg \cite{FaMa} for a definition of pseudo-Anosov homeomorphism)
we are able to obtain properties for $f$ which were unexpected. 
Recall that a partially hyperbolic diffeomorphism is called \emph{dynamically coherent} if there exist $f$-invariant foliations $\cF^{cs}$ and $\cF^{cu}$ 
tangent to $E^{cs}:=E^{ss}\oplus E^c$ and $E^{cu}:=E^c\oplus E^{uu}$ respectively (and therefore, there is also an invariant center foliation obtained by taking intersection).  Otherwise, we say that $f$ is {\it dynamically incoherent.}

Recall that the strong stable and unstable bundles are always uniquely integrable for dynamical reasons (see \eg \cite{HP-survey} and references therein). In higher dimensions, when the center bundle has dimension at least two, 
there are examples where the center bundle is smooth but does not satisfy the Frobenius integrability condition (see \cite{BW-coherence}). When the center bundle is one-dimensional
the problem of integrability only comes from the lack of regularity. Assuming absolute partial hyperbolicity, geometric conditions on the 
strong foliations (which are always satisfied on the torus $\TT^3$) are known to imply dynamical coherence (see \cite{Brin,BBI-coherence}).
For certain families of 3-manifolds, exact conditions for dynamial coherence
are known
(see \cite{CHHU,HP-survey} and references therein).
However, there is no known general criterion for deciding whether the center
bundle is integrable.

This motivated \cite{BI} to construct objects called \emph{branching
foliations}
which exist for any 3-dimensional partially hyperbolic system
and serve as substitutes for the true invariant foliations.
Originally, these branching foliations were used only as a tool to
establish dynamical coherence.
Later, \cite{HHU2} constructed a concrete example which has branching
foliations, but not true foliations.
For this example the branching occurs
exactly on a finite collection of attracting or repelling  $2$-tori (which is incompatible with transitivity and absolute partial hyperbolicity).
Further, \cite{HHU2} conjectured that the existence of such tori is the unique obstruction for dynamical coherence (see also~\cite{CHHU}). 

Theorem~\ref{t.incoh} provides counterexamples to this conjecture:

\begin{theo}\label{t.incoh} For any pseudo-Anosov mapping class $\varphi\in \cM(S)$ there exists a diffeomorphism $f\colon T^1S\to T^1S$ such that:
\begin{itemize}
 \item $f$ belongs to $\iota(\varphi)$,
 \item $f$ is absolutely partially hyperbolic, 
 \item $f$ preserves the volume and is stably ergodic,
 \item $f$ is robustly transitive, 
 \item $f$ is robustly dynamically incoherent.
\end{itemize}
\end{theo}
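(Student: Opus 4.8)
The plan is to deduce Theorem~\ref{t.incoh} from Theorem~\ref{t.geodesic} by showing that, when $\varphi$ is pseudo-Anosov, the partially hyperbolic representative $f\in\iota(\varphi)$ produced by our construction is automatically robustly transitive and robustly dynamically incoherent. The starting point is that $f$ is absolutely partially hyperbolic, volume preserving and stably ergodic; stable ergodicity already yields transitivity on an open set of perturbations, so robust transitivity will follow once we note that all diffeomorphisms $C^1$-close to $f$ are again partially hyperbolic with the same kind of splitting (the cone condition is open) and volume preserving perturbations stay ergodic by Theorem~\ref{t.geodesic} applied in a neighborhood. The substantive point is robust dynamical incoherence, and here the argument should be essentially topological/homotopical rather than perturbative: I would argue that \emph{any} partially hyperbolic diffeomorphism in the mapping class $\iota(\varphi)$, with $\varphi$ pseudo-Anosov, fails to be dynamically coherent, simply because the existence of the invariant branching foliations of \cite{BI} together with the leaf conjugacy/classification machinery would force $\iota(\varphi)$ to act trivially (or periodically) on an object incompatible with the pseudo-Anosov dynamics.

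Concretely, the key steps are as follows. First, recall (from \cite{BI}) that every $3$-dimensional partially hyperbolic $g$ has $g$-invariant branching foliations $\cF^{cs}_{bran}$, $\cF^{cu}_{bran}$ tangent to $E^{cs}$, $E^{cu}$; dynamical coherence for $g$ is equivalent to these branching foliations being genuine foliations. Second, I would analyze the horizontal/vertical structure of these branching foliations on $T^1S$: using that $T^1S$ is a Seifert fibered (circle bundle) space over $S$ and that $g$ lies in $\iota(\varphi)$ with $\varphi$ pseudo-Anosov, one shows that the branching leaves, after lifting to the universal cover and projecting to $\widetilde S=\HH^2$, must be compatible with the singular foliations of the pseudo-Anosov map; in particular the center leaves projected to $S$ would have to be $\varphi_*$-invariant up to isotopy, which is impossible for a genuine (non-branching, non-singular) foliation when $\varphi$ is pseudo-Anosov. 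Third, I would observe that this obstruction is open in $g$: the homotopy class $\iota(\varphi)$ is locally constant, the branching foliations vary semicontinuously, and no nearby $g$ can suddenly acquire genuine invariant $cs$ and $cu$ foliations — hence $f$ is \emph{robustly} dynamically incoherent. Finally, I would record that, unlike the examples of \cite{HHU2}, there are here no invariant tori: absolute partial hyperbolicity plus the branching analysis rules out attracting or repelling $2$-tori (c.f. \cite{BBI-coherence,HP}), consistent with robust transitivity.

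I expect the main obstacle to be the second step: turning ``$\varphi$ is pseudo-Anosov'' into a clean, self-contained contradiction with the existence of genuine invariant center-stable/center-unstable foliations on $T^1S$. The delicate points are controlling the lifted branching foliations in $\widetilde{T^1S}$, dealing with the circle-bundle direction (the center bundle of the unperturbed geodesic flow is the fiber direction, and one must track how the Dehn-twist composition distorts it), and making sure the argument is insensitive to the branching phenomenon itself — i.e.\ that even branching leaves cannot be reconciled with pseudo-Anosov dynamics once one asks for \emph{true} foliations. I would handle this by passing to the universal cover, using the $\pi_1$-action and the Gromov/Thurston picture of pseudo-Anosov flows, and invoking the leaf-space/branching analysis already developed in \cite{BGP} for this exact family of examples, where the relevant incoherence mechanism was implicitly present; the role of this paper's abundance results is precisely to make that mechanism available for \emph{every} pseudo-Anosov $\varphi$, and in a robust, transitive, absolutely partially hyperbolic, stably ergodic package.
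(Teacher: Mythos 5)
Your high-level plan (start from Theorem~\ref{t.geodesic}, use the pseudo-Anosov hypothesis to rule out coherence, and get robustness from openness of the hypotheses) matches the paper's, but both of the substantive steps have genuine gaps. First, robust transitivity does \emph{not} follow from stable ergodicity: stable ergodicity only controls \emph{volume-preserving} perturbations, whereas robust transitivity must survive arbitrary (dissipative) $C^1$-perturbations. The paper has to work for this (Proposition~\ref{p.minimal}): one perturbs conservatively to create blenders at suitable periodic points on a saddle-node center leaf, uses the $u$-section/$s$-section mechanism of~\cite{BDU} to get minimality of both strong foliations for conservative perturbations, and then invokes~\cite{PS} to upgrade this to robust minimality (hence robust transitivity) among \emph{all} nearby diffeomorphisms. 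None of this is in your proposal.

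Second, your incoherence mechanism is not a proof and, as stated, would not work. There is no contradiction between ``$\varphi$ is pseudo-Anosov'' and ``there exist invariant $cs$/$cu$ foliations whose isotopy class is $\varphi_*$-invariant'': the Burago--Ivanov branching foliations \emph{do} exist, are horizontal, and are invariant, so invariance up to isotopy cannot be the obstruction; the issue is only whether they branch. The paper's actual argument is quantitative and dynamical, not homotopical: it requires the bundles of $f$ to be $\eta$-close to the splitting of the geodesic flow (this is exactly why the statement of Theorem~\ref{t.incoherent} carries that hypothesis, and also why incoherence is robust --- the closeness condition is open), so that every center leaf is shadowed by a unique geodesic and vice versa (Theorem~\ref{t.uniqueshadowing}, Corollary~\ref{c.shadowingpartial}). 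One then finds an $\tilde f$-invariant, coarsely contracting center leaf $(c,\vec c)$ shadowing a geodesic joining two \emph{expanding} fixed points of $\partial\tilde\Phi$ at infinity, takes a point $y$ on the local stable manifold of a fixed point $x\in(c,\vec c)$ inside the center-unstable leaf $L$ (this is where dynamical coherence is used: $L$ is a plane projecting diffeomorphically to $\DD^2$), and shows that the boundary points of the center leaves through $\tilde f^n(y)$ converge monotonically to a \emph{contracting} fixed point at infinity; the limiting center leaf then separates $c$ from the orbit of $y$, contradicting $d(\tilde f^n x,\tilde f^n y)\to 0$. Your proposal neither establishes the shadowing input nor identifies where coherence enters, and it claims a stronger statement (incoherence for \emph{every} partially hyperbolic representative of $\iota(\varphi)$) than the paper proves.
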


For these examples we also establish minimality of the strong stable and unstable foliations. This implies that the branching sets of the branching foliations are dense in
the whole manifold. Distributions with this kind of behavior were already known to exist but not associated to partially hyperbolic dynamics~\cite{BonattiFranks}. 

It is natural to expect
that given a maximal curve $\eta$ tangent to center distribution
$E^c$
the stable saturation $W^s(\eta)$ is a center-stable leaf.
However, it was realized early on~\cite{BW} that theoretically the stable saturation $W^s(\eta)$ may fail 
to coincide with the full center-stable leaf. This may happen if $W^s(\eta)$ is not \emph{complete}. The example in~\cite{HHU2} also showed that such hypothetical behavior indeed may occur: $W^s(\eta)$ are not complete and their non-empty boundary is contained in the repelling tori. One could believe that the minimality of the strong foliations could imply completeness, but our examples show that this is not so.

Another important tool for the study of the dynamics of a partially hyperbolic diffeomorphism is the understanding of the dynamics in the center leaves. 
Our examples exhibit a new type of behaviour:  one can show that there are many compact center curves but  none of them is periodic, and that there exists simultaneously contracting, 
repelling and saddle node center leaves (see \eg~\cite[Section 7.3.4]{BDV}). In particular, the examples give a complete answer to~\cite[Problem 7.26]{BDV} (see also~\cite[Section 3.8]{BGP}). See Remark~\ref{rem.centerleafdyn} for a detailed explanation of these phenomena.

\subsection{Partially hyperbolic diffeomorphisms associated to a family of tori transverse to an Anosov flow}

 Given a torus $T$ embedded in a $3$-manifold $M$, we associate an element $\tau(T,\gamma)\in\cM(M)$ called \emph{the Dehn twist 
along $T$ directed by $\gamma$} to each homotopy class of closed curves $\gamma\in\pi_1(T)$ (see section~\ref{ss-dehn} for a precise definition). 

Given a torus $(T,\cF,\cG)$ endowed with a pair of transverse $C^1$-foliations $\cF$ and $\cG$, we consider the subset $G(T,\cF,\cG)$ of 
homotopy classes $\gamma\in\pi_1(T)$ such that there exist a $C^1$-continuous loop $\{\psi_t\}_{t\in \RR/\ZZ}$ of diffeomorphisms of 
$T$ such that:
\begin{itemize}
 \item $\psi_0=id$
 \item $\psi_t(\cF)$ is transverse to $\cG$ for all $t\in\RR/\ZZ$
 \item the homotopy class of the loop $\{\psi_t(x)\}_{t\in \RR/\ZZ}$, $x\in T$, is $\gamma$.  
\end{itemize}
It was shown in \cite{BZ} that either $G(T,\cF,\cG)=\pi_1(T)\simeq \ZZ^2$ or 
both foliations contain circle leaves in the same homotopy class and $G(T,\cF,\cG)$ is the cyclic group $\ZZ$ generated by this homotopy class. 

Now, if $X$ is a $C^2$-Anosov flow on a $3$-manifold $M$ and if $T\subset M$ is a torus transverse to $X$ then the center-stable and unstable foliations
of $X$ induce a pair of transverse $C^1$-foliation $(F^s_T,F^u_T)$ on $T$. Then we denote by $G(T,X)$ the group $G(T,F^s_T,F^u_T)$. 

Given an Anosov flow $X$ on a $3$-manifold $M$ we denote by $\cT(X)\subset \cM(M)$ the subgroup generated by the Dehn 
twists $\tau(T,\gamma)$ where $T$ is a torus transverse to $X$ and $\gamma$ belongs to $G(T,X)$.

\begin{theo}\label{t.transverse}
Let $X$ be an Anosov flow on a closed $3$-manifold $M$.  Then every element of $\cT(X)$ contains an absolutely
partially hyperbolic representative. 
\end{theo}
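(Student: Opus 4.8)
The plan is to prove Theorem~\ref{t.transverse} by reducing it to the case of a single Dehn twist $\tau(T,\gamma)$ with $\gamma\in G(T,X)$ and then assembling these via $h$-transversality, so that an arbitrary word in $\cT(X)$ keeps an absolutely partially hyperbolic representative.  The starting point is the suspension picture: if $X$ is an Anosov flow on $M$, then its time-$t_0$ map (or more precisely a perturbation thereof into the "partially hyperbolic skew-product over the flow" regime) is absolutely partially hyperbolic with one-dimensional center $E^c=\RR X$, strong stable $E^{ss}=E^{ss}_X$ and strong unstable $E^{uu}=E^{uu}_X$.  Given a torus $T$ transverse to $X$, I would localize near $T$: using the transversality I can find a flow box $T\times(-\epsilon,\epsilon)$ on which $X$ is the vertical vector field, and on which the center-stable/unstable foliations of the flow restrict to the pair $(F^s_T,F^u_T)$ on each slice.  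This is exactly the setup in \cite{BGP}; the new content is making the construction compatible with composition.

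The key notion is $h$-transversality (as advertised in the abstract and, presumably, set up in the body of the paper before this theorem): two partially hyperbolic diffeomorphisms $f_1,f_2$ supported in disjoint or appropriately nested regions, each preserving a common pair of cone fields adapted to $(E^{ss},E^c,E^{uu})$, can be composed so that $f_2\circ f_1$ is again partially hyperbolic.  The plan is: (1) For a single transverse torus $T$ and $\gamma\in G(T,X)=G(T,F^s_T,F^u_T)$, use the loop $\{\psi_t\}$ of diffeomorphisms of $T$ with $\psi_t(F^s_T)\pitchfork F^u_T$ to build, inside the flow box $T\times S^1$ (thinking of $X$ as having a cross-section through $T$, or working in a local product chart), a diffeomorphism $g_T$ realizing the Dehn twist $\tau(T,\gamma)$, defined by $(x,s)\mapsto(\psi_{\beta(s)}(x),s)$ for a suitable bump function $\beta$ going from $0$ to $1$; the transversality condition on the loop is precisely what guarantees that the image of the unstable cone field under $Dg_T$ still lies in the unstable cone and the stable cone is contracted, so that $g_T\circ(\text{time-}t_0\text{ map})$ is absolutely partially hyperbolic.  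This is the content already present in \cite{BGP} for one twist — I would cite it.  (2) Observe that if $T_1,\dots,T_k$ are transverse tori and $\gamma_i\in G(T_i,X)$, the corresponding loops can be chosen with support in small flow boxes around the $T_i$, and since these flow boxes can be taken pairwise disjoint (or, when two tori are isotopic and one must work near a single torus, the loops in $\pi_1(T)$ generated by a common circle leaf still commute and can be concatenated), the composition $g_{T_k}\circ\cdots\circ g_{T_1}$ remains $h$-transverse to the flow's cone fields.  (3) Conclude that this composition, post-composed with the flow's time-$t_0$ map, is absolutely partially hyperbolic and lies in the mapping class $\tau(T_k,\gamma_k)\cdots\tau(T_1,\gamma_1)$, which is a general element of $\cT(X)$ since these twists generate.

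The main obstacle, and where I expect the real work to be, is step (2): controlling the interaction of several Dehn twists when the tori are not disjoint — in particular when several twists are performed along isotopic copies of the same torus, or along tori that intersect.  One needs the $h$-transversality relation to be \emph{transitive} in the appropriate sense, i.e. that $h$-transversality of $f_1$ and $f_2$ to a common pair of cone fields implies $f_2\circ f_1$ is $h$-transverse to (a possibly slightly wider) pair, so that arbitrarily long words compose without cone degeneration.  A second, more technical point is verifying that the cone fields of the ambient Anosov flow are genuinely preserved (not merely the splitting), since $h$-transversality is a statement about cones rather than bundles; this requires choosing the cone fields with enough room, uniformly, which is where the hypotheses that $X$ is $C^2$ and that $\gamma\in G(T,X)$ (rather than an arbitrary class in $\pi_1(T)$) enter decisively.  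Once these are in place, absolute partial hyperbolicity — as opposed to the pointwise notion — follows because both the flow's time-$t_0$ map and each $g_{T_i}$ can be taken to act on center vectors with norm uniformly bounded away from the stable and unstable rates, the $g_{T_i}$ being isotopies supported transverse to $X$ and hence acting trivially (up to bounded distortion) on $E^c=\RR X$.
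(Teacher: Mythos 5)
Your overall strategy --- build one twist in a flow box using the transversality of $\varphi_s(F^u)$ with $F^s$, then assemble twists via $h$-transversality --- is indeed the paper's, but step (2) has a genuine gap, and it sits exactly where you yourself locate ``the real work''. Disjoint supports do handle disjoint tori, but an arbitrary element of $\cT(X)$ is a word in twists along possibly coincident or intersecting tori, and there your proposed fixes fail. Concatenating the loops $\{\psi_t\}$ does not work: the concatenated loop need not keep $\psi_t(F^u)$ transverse to $F^s$ at intermediate times (and when $G(T,X)\simeq\ZZ^2$ there is no common circle leaf to appeal to). Nor does a ``transitivity with slightly wider cones'': after applying $Dh_1$, the bundle $Dh_1(E^{uu})$ is merely \emph{transverse} to $E^{cs}$, not close to $E^{uu}$, so $Dh_2$ applied to it is uncontrolled, and widening cones at each step cannot survive arbitrarily long words. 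The missing idea is to interleave high powers of the time-one maps between consecutive twists: the representative actually constructed is $F=f_1^m\circ g_\ell\circ f_\ell^m\circ h_\ell\circ\cdots\circ h_2\circ f_2^m\circ g_1\circ f_1^m\circ h_1$, where $Df_i^m$ pushes the image of $E^{uu}$ under the preceding maps back into an arbitrarily narrow cone about $E^{uu}_{f_i}$ before the next twist acts (Lemma~\ref{l.h-transverse} and Proposition~\ref{prop-criterion}). This costs nothing homotopically since each $f_i^m$ and each $g_i$ is isotopic to the identity.

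A second, more technical gap is in your step (1): in a flow box $T\times(-\epsilon,\epsilon)$ of fixed width the strong bundles $E^{ss},E^{uu}$ have a definite component along $X$ and are \emph{not} close to tangent to the slices $T\times\{s\}$, so the transversality $\varphi_s(F^u)\pitchfork F^s$ of the induced foliations on $T$ does not by itself control $Dh(E^{uu})$ against $E^{cs}$. The paper resolves this by reparametrizing the flow, equivalently inserting a flow box $[0,\eta]\times T$ and letting $\eta\to\infty$: only in that limit do the strong bundles become horizontal in the straightened coordinates (Lemma~\ref{l.limit}), which converts the two-dimensional transversality on $T$ into the three-dimensional transversality demanded by $h$-transversality. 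This is also why the theorem is proved for time-one maps of $\rho_{T_i}X$ with torus-dependent reparametrizations, and why Corollary~\ref{coro-htransv} is needed to pass from one reparametrized flow to the next along the chain.
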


In certain cases, this theorem produces partially hyperbolic representatives in virtually all mapping classes.

\begin{coro}
\label{corollary}
For each $n\ge 1$ there exists a closed 3-dimensional graph manifold $M_n$ and a finite index abelian subgroup $G\simeq \ZZ^{2n}\oplus \ZZ/n\ZZ$ of the mapping class group $\cM(M_n)$ such that each mapping class in $G$ can be represented by an absolutely partially hyperbolic, volume preserving diffeomorphism.
\end{coro}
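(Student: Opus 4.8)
This corollary is obtained by applying Theorem~\ref{t.transverse} to a well-chosen family of Anosov flows, and the work lies in the choice of the flows and in the bookkeeping of the mapping class group. For each $n$ I would first produce a closed orientable graph manifold $M_n$, an Anosov flow $X_n$ on it, and a diffeomorphism $R$ of order exactly $n$, with the following features. The JSJ tori of $M_n$ are $T_1,\dots,T_n$, each transverse to $X_n$, and for each $i$ one has $G(T_i,X_n)=\pi_1(T_i)\cong\ZZ^2$; by the dichotomy of \cite{BZ} this last point only requires that the induced transverse foliations $F^s_{T_i}$ and $F^u_{T_i}$ carry no common homotopy class of circle leaves, which can be arranged in the construction. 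The diffeomorphism $R$ preserves $X_n$, fixes each $T_i$ setwise, and acts trivially on $H_1(T_i;\ZZ)$; finally $X_n$ preserves a smooth volume, which after averaging over $\langle R\rangle$ we take to be $R$-invariant. Such flows would be built in the spirit of the Franks--Williams and Bonatti--Langevin examples (and Barbot's generalisations), assembling $n$ blocks into an $R$-symmetric configuration, or alternatively by passing to a suitable $n$-fold cyclic cover of a fixed such manifold so that $R$ becomes the deck transformation.

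Next I would identify $\cM(M_n)$. By Johannson's structure theory (\cite{Johannson}), $\cM(M_n)$ is, up to finite index, generated by the Dehn twists along its JSJ tori together with contributions from the Seifert pieces and from symmetries of the JSJ graph; choosing the building blocks of $M_n$ so that these last contributions are finite (apart from $R$), $\cM(M_n)$ becomes virtually abelian. For each $i$, the two generators of $\pi_1(T_i)$ give Dehn twists along $T_i$; these $2n$ twists commute with one another (twists along disjoint tori commute, and twists along a fixed torus commute since $\pi_1(T_i)$ is abelian) and they are independent in $\cM(M_n)$, hence span a free abelian subgroup of rank $2n$. Since $R$ fixes each $T_i$ and acts trivially on $H_1(T_i)$, conjugation by $R$ fixes each of these twists. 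Therefore the subgroup $G$ generated by the $2n$ Dehn twists and by $R$ is isomorphic to $\ZZ^{2n}\oplus\ZZ/n\ZZ$ and has finite index in $\cM(M_n)$.

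It remains to realise every element of $G$ by an absolutely partially hyperbolic, volume-preserving diffeomorphism. For each $w$ in the rank-$2n$ Dehn-twist subgroup, Theorem~\ref{t.transverse} gives an absolutely partially hyperbolic representative; since all the ingredients of its proof (the flow $X_n$, the tori $T_i$, and the $h$-transversality loops used on the $T_i$) can be chosen $R$-invariant — possible because $R$ fixes each $T_i$ and acts trivially on $H_1(T_i)$ — the construction may be carried out $R$-equivariantly, producing a representative $g_w$ which commutes with $R$, is absolutely partially hyperbolic for an $R$-invariant metric, and preserves the $R$-invariant volume (transporting the volume form along the construction by Moser's trick). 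For $0\le k<n$ put $f_{k,w}:=R^{k}\circ g_w$; its isotopy class is $R^{k}\cdot w$, so the $f_{k,w}$ exhaust $G$. Equivariance gives $(f_{k,w})^{m}=R^{km}\circ g_w^{m}$ for all $m$, and as $R$ is an isometry of the $R$-invariant metric the expansion and contraction estimates of $(f_{k,w})^m$ along the $R$-invariant partially hyperbolic splitting of $g_w$ agree with those of $g_w^m$; hence $f_{k,w}$ is absolutely partially hyperbolic and preserves the volume. In particular, taking $w=0$, the composition of $R^{k}$ with a time-$\ell$ map of $X_n$ (with $\ell$ large) is absolutely partially hyperbolic with center bundle the flow direction, which covers the torsion classes.

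The main obstacle is the first step: producing, for every $n$, a graph manifold that carries an Anosov flow transverse to all of its JSJ tori with the full group $\ZZ^2$ of admissible twist directions on each torus, while at the same time having virtually abelian mapping class group and admitting the central order-$n$ symmetry $R$. The $R$-equivariant version of Theorem~\ref{t.transverse} and the mapping class group computation are then comparatively routine.
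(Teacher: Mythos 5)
Your overall architecture --- Johannson's characteristic submanifold theory to pin down a finite-index subgroup of $\cM(M_n)$ generated by Dehn twists along tori transverse to an Anosov flow, Theorem~\ref{t.transverse} to realize those twists, and a cyclic symmetry commuting with the flow to account for the $\ZZ/n\ZZ$ factor --- is exactly the paper's. But there is a genuine gap precisely where you yourself locate ``the main obstacle'': you never produce the manifold $M_n$, the flow $X_n$, or the symmetry $R$, and that existence statement is the actual content of the corollary. Everything downstream (that $G(T_i,X_n)\simeq\ZZ^2$, that the Seifert pieces contribute only finite groups to $\cM(M_n)$, that the twists can be made volume preserving) is conditional on this unconstructed example. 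The paper supplies it concretely: $M_1$ is the Bonatti--Langevin manifold \cite{BL}, obtained by self-gluing the circle bundle $\Sigma$ over $B=\RR P^2$ minus two disks along a transverse torus, and $M_n$ is its $n$-fold cyclic cover. The points you wave at then require real verification: maximality of $G(T,X)$ follows from the explicit picture of the induced foliations on the gluing torus in \cite{BL} combined with \cite{BZ}; the finiteness of the Seifert contributions is a computation via \cite[Lemma 25.2]{Johannson} giving $\bar\cM^0(\Sigma,\partial\Sigma)\simeq\ZZ/2\ZZ$ and $\bar\cM(B,\partial B)\simeq\ZZ/2\ZZ$ --- this is where the specific base $\RR P^2$ minus two disks matters, since for a generic Seifert piece these relative mapping class groups are infinite and Proposition~\ref{prop_K} does not apply; and volume preservation of the Dehn twists uses \cite[Lemma 3.8]{BGP}.

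A second, related problem: the properties you impose on $R$ (fixing each $T_i$ setwise and acting trivially on $H_1(T_i;\ZZ)$, so that conjugation by $R$ fixes every twist generator) are \emph{not} satisfied by the deck transformation of an $n$-fold cyclic cover, which cyclically permutes the $n$ lifted tori; so your fallback ``let $R$ be the deck transformation'' is inconsistent with the hypotheses your group-theoretic argument needs, while a symmetry fixing every JSJ torus and every twist direction is yet another constraint on the still-missing construction. The paper works directly with the permuting deck transformation, realizing its classes by using that the time-one map of the flow is $h$-transverse to itself for $h$ a deck transformation. If you follow that route you must revisit your identification of the subgroup generated by the $2n$ twists together with $R$, since conjugation by $R$ then permutes the twist generators rather than centralizing them.
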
 

Manifolds $M_n$ above are $n$-fold cyclic covers of the ambient manifold of the Bonatti-Langevin Anosov flow~\cite{BL}.

\subsection{Questions}
It is our opinion that this paper has opened even more questions than the previous one~\cite{BGP}. In particular, it remains a challenge to better understand these new examples which have several new features which contrast our previous beliefs and should be now taken into account in the study of the dynamics of partially hyperbolic diffeomorphisms. 

We state here some questions which we believe arise naturally from our work, but the reader is of course invited to add more. 

We have constructed many mapping classes which admit partially hyperbolic representatives on manifolds which admit Anosov flows. It is natural to ask for characterization of all such mapping classes.

\begin{problem}\label{prob1} Given a closed 3-manifold $M$ admitting an Anosov flow, determine which mapping classes admit partially hyperbolic representatives.\end{problem}

In this direction, the following question is natural:

\begin{quest}
%
    For a manifold $M$, let $\cP$ be those elements of $\cM(M)$
    admitting a partially hyperbolic representative.
    When is $\cP$ a subgroup of $\cM(M)$?
    When is $\cP$ a finite-index group?
    When does it equal $\cM(M)$?
\end{quest} 

In this direction, Corollary \ref{corollary} shows that on some 3-manifolds, the classes which can be realized by partially hyperbolic diffeomorphisms contain a finite index subgroup. See also subsection \ref{ss.obstruction} for results in this direction.

This paper reduces the realization of mapping classes admitting partially hyperbolic diffeomorphisms to understanding geometry of the subbundles and how can they be made $h$-transverse (see Definition~\ref{defi-htran}). Under certain assumptions on the Anosov flow, we have tools to check that certain Dehn twists along certain tori respect transversality between the subbundles of the Anosov flow. In particular, our understanding seems to be complete in the case when tori are transverse to the Anosov flow. However, there are other configurations where $h$-transversality is not completely understood. In particular, we address the case of Birkhoff tori only in the geometric setting --- we perform Dehn-twists using the geometric considerations of~\cite{BGP}.  The general case of Birkhoff tori remains to be understood.

\begin{problem} Determine which Dehn twists 
respect transversalities between the bundles of an Anosov flow. 
\end{problem}

Understanding the above problem may yield even more mapping classes with partially hyperbolic representatives and provide a way to attack Problem~\ref{prob1}. Notice that there are some general results about Anosov flows which indicate that the study of Birkhoff tori might be sufficient (see~\cite{Barbot}). 

Further questions arise about the geometry and dynamics of these new examples.
Recall that for certain examples which realize pseudo-Anosov mapping class we are able to establish dynamical incoherence as well as non-completeness of stable manifolds of center leaves (see Corollary~\ref{c.noncomplete}). Also these examples possess saddle-node center leaves. However, for the general examples, many questions arise:

\begin{quest}
    Which of the new examples are dynamically coherent (compare \cite{BPP})?
    Are the dynamically coherent examples plaque-expansive?
    Can the transitive examples transverse to tori be always made volume preserving? Robustly transitive?
\end{quest} 

The examples given by Theorem~\ref{t.incoh} come from the pseudo-Anosov mapping class and, hence, have some strong and better understood dynamical and geometric properties. A natural question that one can pose is:

\begin{quest}
    Is every absolutely partially hyperbolic diffeomorphism of $T^1S$ homotopic to the lift of a pseudo-Anosov diffeomorphism of $S$ transitive (or at least chain-recurrent)? 
\end{quest}    

Compare with \cite{BG,Shi,Pot}. 

\subsection{Outline of the paper} 

The paper is organized as follows. In section~\ref{s.criterion} we present a flexible criterion, which conceptualizes the results of~\cite{BPP,BGP} and allows to compose maps while retaining the partially hyperbolic property. 

We apply this criterion in sections \ref{s.geodesic} and \ref{s.transverse} to prove Theorems \ref{t.geodesic} and \ref{t.transverse}, respectively. To be able to apply the criterion we need specific constructions for each case, using the ideas in \cite{BGP} to prove Theorem \ref{t.geodesic} and the ones of \cite{BZ} for Theorem \ref{t.transverse}. In addition, we show in subsection \ref{ss.obstruction} that not every mapping class can be realized as a partially hyperbolic diffeomorphism and in subsection \ref{ss.everymapping} that there are manifolds with infinite mapping class for which virtually every mapping class can be realized as a partially hyperbolic diffeomorphism. 

Finally, in section \ref{s.incoherence} we show dynamical incoherence of some of the examples presented here (proving Theorem \ref{t.incoh}), and in particular we show some relevant properties about the structure of the center curves of such examples. 

\section{A criterion for partial hyperbolicity}\label{s.criterion}

We provide a general criterion for partial hyperbolicity based on cone-fields. This criterion is obtained by extracting and formalizing 
the key point of the argument
 in the previous 
paper~\cite{BGP} (see also~\cite{HP-survey}).  The criterion has no restrictions on the dimensions of subbundles.

\begin{defi}\label{defi-htran} Let $f: M\to M$ and $g:N\to N$ be partially hyperbolic diffeomorphisms and let $h\colon M\to N$ be a diffeomorphism.
We say that $f$ is $h$-\emph{transverse} to $g$ and use the notation
$$f \ \xrightarrow[h]{} \ g$$ 
if $Dh(E^{uu}_f)$ is transverse to $E^{cs}_g$ and $Dh^{-1}(E^{ss}_g)$ is transverse to $E^{cu}_f$.  
\end{defi}

\begin{rema}
    It is easy to see that $f \ \xrightarrow[h]{} \ g$
    holds if and only if
    $g^{-1} \ \xrightarrow[{h^{-1}}]{} \ f^{-1}$. Further, if $f \
\xrightarrow[h]{} \ g$ then $f \ \xrightarrow[g^lhf^k]{} \ g$ for any
non-negative integers $k,l$.
\end{rema}

The $h$-transversality property is transitive in the following sense.

\begin{lemm}\label{l.h-transverse} Let $f_i:M_i \to M_i$, $i\in\{1, \dots,\ell\}$ be a family of partially hyperbolic diffeomorphisms and let $h_j\colon M_j\to M_{j+1}$, 
$j\in\{1,\dots, \ell-1\}$ be a family of diffeomorphisms such that $f_i$ is $h_i$-transverse to $f_{i+1}$.  Then, there exist positive  $m_2,\dots,m_{\ell-1}$ such that, 
for every $n_i\geq m_i$ 
$$f_1 \ \xrightarrow[h_{\bar n}]{}  \ f_\ell,$$
 where $h_{\bar n}= h_\ell \circ f_{\ell-1}^{n_{\ell-1}}\circ h_{\ell-1}\circ\cdots\circ f_2^{n_2}\circ h_1$. 
\end{lemm}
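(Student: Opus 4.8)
The plan is to reduce the general statement to the two-term case by induction on $\ell$, so the crux is a single lemma: if $f \xrightarrow[h]{} g \xrightarrow[k]{} e$ are partially hyperbolic diffeomorphisms with the two displayed transversalities, then $f \xrightarrow[k\circ g^n\circ h]{} e$ for all sufficiently large $n$. Granting this, the inductive step composes $f_1 \xrightarrow[h_1]{} f_2$ with the already-constructed $f_2 \xrightarrow[h']{} f_\ell$ (where $h'$ is the iterated composition through $f_3,\dots,f_{\ell-1}$), using the Remark to note that $f_2 \xrightarrow[h']{} f_\ell$ is unaffected by precomposing with further iterates of $f_2$; one then only has to feed in one more large exponent $m_2$.

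\textbf{The key two-term lemma.}
To prove $f \xrightarrow[k\circ g^n\circ h]{} e$ we must check that $D(k\circ g^n\circ h)(E^{uu}_f)$ is transverse to $E^{cs}_e$, and symmetrically that $D(k\circ g^n\circ h)^{-1}(E^{ss}_e)$ is transverse to $E^{cu}_f$. I would do the first; the second follows by the same argument applied to the inverses, using the Remark's equivalence $f \xrightarrow[h]{} g \iff g^{-1} \xrightarrow[h^{-1}]{} f^{-1}$. By hypothesis $Dh(E^{uu}_f)$ is transverse to $E^{cs}_g$, which by the cone criterion (or directly from the definition of partial hyperbolicity for $g$) means $Dh(E^{uu}_f)$ lies in an unstable cone field $\cC^{uu}_g$ for $g$ that is strictly invariant and contracted toward $E^{uu}_g$ under $Dg$. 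Applying $Dg^n$ for large $n$ pushes $Dh(E^{uu}_f)$ arbitrarily close to $E^{uu}_g$, uniformly over $M$; since $E^{uu}_g$ is transverse to $E^{cs}_g$ with a uniform angle, $Dg^n Dh(E^{uu}_f)$ stays inside a narrow cone around $E^{uu}_g$, and in particular is uniformly transverse to $E^{cs}_g = Dk^{-1}(E^{cs}_e)$ once $n\ge m$. Pushing forward by $Dk$ (a fixed diffeomorphism, hence with bounded distortion of angles) then gives that $D(k\circ g^n\circ h)(E^{uu}_f)$ is transverse to $E^{cs}_e$. The choice of $m$ depends only on the uniform cone data of $g$, on $h$, and on $k$, not on $f$ or $e$ beyond the transversality hypotheses, which is what makes the induction go through with finitely many exponents $m_2,\dots,m_{\ell-1}$.

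\textbf{Expected main obstacle.}
The routine part is the cone-contraction estimate; the delicate point is bookkeeping the quantifiers in the induction so that the thresholds $m_i$ can be chosen independently and the final composition $h_{\bar n} = h_\ell\circ f_{\ell-1}^{n_{\ell-1}}\circ\cdots\circ f_2^{n_2}\circ h_1$ inherits transversality for \emph{all} choices $n_i\ge m_i$ simultaneously rather than just for one tuple. The clean way to organize this is to prove the statement ``for every pair of transversality cones there is a threshold'' as a standalone quantitative lemma and then apply it $\ell-1$ times, each time absorbing the previously built-up map (which is a diffeomorphism between fixed manifolds) into the role of $h$ or $k$; the Remark's second assertion — that $h$-transversality is stable under precomposition by iterates of $f$ and postcomposition by iterates of $g$ — is exactly what guarantees that enlarging an exponent $n_i$ beyond its threshold never destroys a transversality already achieved at a later stage. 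I expect that once this quantitative reformulation is in place, the proof is short.
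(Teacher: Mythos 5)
Your proposal follows essentially the same route as the paper: reduce to the three-term case, use the fact that $Dg^n\bigl(Dh(E^{uu}_f)\bigr)$ converges uniformly to $E^{uu}_g$ because of the first transversality, then invoke the second transversality to conclude, with a symmetric argument (via inverses) for the $ss/cu$ condition. One slip to correct: the identity $E^{cs}_g = Dk^{-1}(E^{cs}_e)$ you invoke is false in general and is not what the hypothesis $g \xrightarrow[k]{} e$ gives; the correct step is that $Dk(E^{uu}_g)$ is transverse to $E^{cs}_e$ by hypothesis, and since transversality is a uniformly open condition on a compact manifold, $Dk\bigl(Dg^n Dh(E^{uu}_f)\bigr)$ remains transverse to $E^{cs}_e$ once $Dg^n Dh(E^{uu}_f)$ is close enough to $E^{uu}_g$. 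Your attention to the uniformity of the thresholds $m_i$ in the induction is a point the paper itself glosses over, and your resolution (the angles stay uniformly bounded below for all $n_i$ above threshold because the intermediate bundles converge) is the right one.
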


\begin{proof} We will show that $$ f_1 \ \xrightarrow[h_1]{} \ f_2 \ \xrightarrow[h_2]{} \ f_3,$$ 
implies that for large $n$ one has that $f_1 \ \xrightarrow[\hat h]{} \ f_3$ with $\hat h = h_2 \circ f_2^n \circ h_1$ as the statement follows by several applications of this. 

As $Dh_1(E^{uu}_{f_1})$ is transverse to $E^{cs}_{f_2}$ it follows that $Df_2^n(Dh_1(E^{uu}_{f_1}))$ converges uniformly to $E^{uu}_{f_2}$. The fact that $Dh_2(E^{uu}_{f_2})$ is transverse to $E^{cs}_{f_3}$ allows to conclude that $D(h_2 \circ f_2^n \circ h_1)(E^{uu}_{f_1})$ is transverse to $E^{cs}_{f_3}$. 

A symmetric arguments gives the other condition. 
\end{proof}

Now, we can formulate the criteria for partial hyperbolicity:

\begin{prop}\label{prop-criterion}
Let $f : M \to M$ be an absolutely partially hyperbolic diffeomorphism and $h: M \to M$ a diffeomorphism of $M$ such that $f \ \xrightarrow[h]{} \ f$. Then, there exists $N>0$ such that $f^n \circ h$ is absolutely partially hyperbolic for any $n\geq N$. 
\end{prop}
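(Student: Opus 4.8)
The plan is to use cone-fields together with the absolute partial hyperbolicity of $f$ to build invariant cones for $f^n\circ h$. Since $f \xrightarrow[h]{} f$, the subbundle $Dh(E^{uu}_f)$ is transverse to $E^{cs}_f = E^{ss}_f\oplus E^c_f$, so it is contained in some cone-field $\Coneuu$ around $E^{uu}_f$ whose complementary cone $\Conecs$ around $E^{cs}_f$ is strictly $Df\inv$-invariant. The key computation is: $D(f^n\circ h) = Df^n \circ Dh$ maps $Dh\inv(\text{a thin cone around } E^{uu}_f)$ first by $Dh$ into $\Coneuu$, and then iterating $Df^n$ for $n$ large contracts this image uniformly toward $E^{uu}_f$ (this is exactly the convergence statement used in the proof of Lemma~\ref{l.h-transverse}, coming from the domination $E^{cs}_f \oplus E^{uu}_f$). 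Symmetrically, using $Dh\inv(E^{ss}_f)$ transverse to $E^{cu}_f$, one gets that $D(f^n\circ h)\inv = Dh\inv\circ Df^{-n}$ maps a cone around $E^{ss}_f$ strictly inside itself for $n$ large. So first I would fix cone-fields $\Coneuu, \Conecs$ adapted to $f$ (exploiting absolute partial hyperbolicity to get uniform rates), then show these same cone-fields are strictly invariant under $f^n\circ h$ (resp. its inverse) once $n\ge N$.

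The second step is to upgrade cone-invariance to the actual partial hyperbolicity estimates, i.e. to produce the constant $\ell$ and the inequality $\|D(f^n\circ h)^\ell v^{ss}\| < \min\{1,\|D(f^n\circ h)^\ell v^c\|\} \le \max\{\dots\} < \|D(f^n\circ h)^\ell v^{uu}\|$. Here I would argue as follows. On the unstable side: vectors in the $\Coneuu$ cone get expanded by a definite factor under $Df$ (by absolute partial hyperbolicity, any vector in a sufficiently thin cone around $E^{uu}_f$ is expanded by at least $\lambda_2' > 1$), and $Dh$ distorts norms by a bounded factor $C$ independent of $n$; choosing $n$ so that $(\lambda_2)^{n-1}/C$ and similar quantities dominate the relevant competing rates gives genuine expansion on the $f^n\circ h$-invariant unstable cone-field. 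Symmetrically one gets contraction on the stable cone-field. For the center, one takes the invariant center bundle as (roughly) the "remaining" directions; because $f$ is \emph{absolutely} partially hyperbolic the gap between $\lambda_1$ and $\lambda_2$ is uniform, so for $n$ large the composition $f^n\circ h$ still has a spectral gap on the complementary bundle between the stable and unstable cones, which gives the central estimate. The invariant center bundle itself is obtained as the intersection of the maximal invariant $\Conecs$-type and $\Ecu$-type cone-fields (graph transform / standard cone argument), and continuity of $E^{c}_{f^n\circ h}$ follows from the usual contraction-mapping construction.

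A cleaner way to organize the whole argument, which I would actually prefer, is to reduce directly to a known cone criterion: establish that for $n\ge N$ there exist continuous cone-fields $\Coneuu$ and $\Conecs$ on $M$, a Riemannian metric, and constants $0<\mu_1<\mu_2$ with $\mu_1<1<\mu_2$ (one may even arrange $\mu_1<\mu_2$ with a genuine gap, giving \emph{absolute} partial hyperbolicity of $f^n\circ h$ as well, though the statement only claims partial hyperbolicity), such that $D(f^n\circ h)(\Coneuu)\subset \inter \Coneuu$, $D(f^n\circ h)\inv(\Conecs)\subset\inter\Conecs$, vectors in $\Coneuu$ are expanded by more than $\mu_2$, vectors in $\Conecs$ have a stable sub-cone contracted by less than $\mu_1$, and the center rate sits strictly between. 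This is precisely the "formalized key point of \cite{BGP}" the section advertises, so I would either invoke it as the cone criterion or prove it inline. The dependence of $N$ on everything is uniform because $f$ is fixed and absolutely partially hyperbolic, and $h$ is fixed, so all distortion constants ($C$ for $\|Dh^{\pm1}\|$, cone-widths, rates $\lambda_1,\lambda_2$) are fixed; one simply takes $n$ large enough to beat them.

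The main obstacle is the center direction: cone-invariance for the extreme (strong stable and strong unstable) cones is essentially immediate from the transversality hypothesis plus domination, but showing that $f^n\circ h$ genuinely has a one-step-$\ell$ partially hyperbolic splitting requires controlling the center rate of $f^n\circ h$ — a priori the $Dh$ factor could push center vectors into, or close to, the stable or unstable cone and spoil the domination inequality. This is exactly where \emph{absolute} partial hyperbolicity of $f$ is essential (a pointwise-dominated $f$ would not survive the bounded-but-uncontrolled distortion of $Dh$ after only finitely many iterates): the uniform constants $\lambda_1<1<\lambda_2$ let one choose $n$ so large that the $n$-fold application of $Df$ overwhelms the bounded distortion of $Dh$ in all three regimes simultaneously, restoring a uniform spectral gap for $f^n\circ h$ on the complement of the two strong cones.
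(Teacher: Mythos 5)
Your proposal is correct and follows essentially the same route as the paper: fix cone fields adapted to $f$, use the transversality hypothesis plus domination to make them strictly invariant under $f^n\circ h$ and its inverse for large $n$, let the uniform rates $\gamma<\lambda$ from absolute partial hyperbolicity overwhelm the bounded distortion of $Dh$, and conclude via the classical cone criterion applied to the two weak splittings $E^{cs}\oplus E^{uu}$ and $E^{ss}\oplus E^{cu}$. Your closing remarks on why absoluteness is needed and on the center direction match the paper's handling of the complementary cone $\Conecs$ with rate $\gamma^n/a < a\lambda^n$.
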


\begin{proof} We prove that $f^n \circ h$ has a weak absolutely partially hyperbolic
    splitting of the form $\Ecs \oplus \Euu$.
    A similar proof shows that $f^n \circ h$ has a splitting of the form
    $\Ess \oplus \Ecu$,
    and together these show that $f^n \circ h$ is 
    absolutely partially hyperbolic. We will use the classical cone criteria, see e.g. \cite[Appendix B]{BDV}. 

    Up to replacing $f$ by an iterate,
    there are $1 < \gam < \lam$ such that
    \begin{math}
        \|Df v^c\| < \gam < \lam < \|Df v^u\|
    \end{math}
    for all unit vectors $v^c  \in  \Ecs_f$ and $v^u  \in  \Euu_f$.
    By properties of partial hyperbolicity,
    there is a cone family $\Coneuu \subof TM$ such that
    $Df(\Coneuu) \subof \Coneuu$,
    \[
        \bigcap_{n  \ge  0} Df^n(\Coneuu) = \Euu_f,
    \]
    and
    $\|Df v\|  \ge  \lam \|v\|$
    for all
    $v  \in  \Coneuu$.
    Since
    \[
        \bigcap_{n  \ge  0} D(h \circ f^n)(\Coneuu) = Dh(\Euu_f)
    \]
    is transverse to $\Ecs_f$, it follows that
    \begin{math}
        D(h \circ f^k)(\Coneuu) \cap \Ecs_f = 0
    \end{math}
    for some integer $k$.
    By replacing $\Coneuu$ with $Df^k(\Coneuu)$,
    we freely assume that
    $Dh(\Coneuu) \cap \Ecs = \{0\}$.

    For any non-zero vector $v  \in  \Coneuu$, the angle between
    $D(f^n \circ h)(v)$ and $\Euu_f$ tends to zero as $n  \to  +\infty$
    and so there is $N$ such that
    $D(f^n \circ h)(v)  \in  \Coneuu$
    for all $n > N$.
    By a compactness argument,
    this $N$ may be chosen independently of $v$.
    Further, there is a constant $a > 0$
    such that
    \begin{math}
        \|D(f^n \circ h) v\|  \ge  a \lam^{n} \|v\|
    \end{math}
    for all $n > N$ and $v  \in  \Coneuu$.
    If $n$ is sufficiently large, then $a \lam^{n} > 1.$

    By a similar argument (and possibly adjusting $a$ and $N$),
    there is a cone family $\Conecs$ such that if $v  \in  \Conecs$, then
    \[
        D(f^n \circ h) \inv v  \in  \Conecs
        \qandq
        \|D(f^n \circ h) \inv v\|  \ge  \frac{a}{\gam^{n}} \|v\|
    \]
    for all $n > N$.
    Note also that $\frac{\gam^n}{a} < a \lam^n$ for all large $n$.
    These conditions together imply that $f^n \circ h$
    has a weak absolutely partially hyperbolic splitting of the
    form $\Ecs \oplus \Euu$.
\end{proof}

Indeed, it follows from the proof of the proposition that we have: 

\begin{sch}\label{adendum.criteria} 
In the setting of Proposition \ref{prop-criterion}, 
given any sufficiently narrow cone field $\cC^{uu}$ (resp. $\cC^{ss}$) in $M$ around $E^{uu}$ 
(resp.  around  $Dh^{-1}(E^{ss})$), there exists $N$ such that $f^n\circ h$ 
preserves the cone fields for $n>N$. 
\end{sch}

\begin{rema}\label{r.bundles}
Notice that if one denotes $F_{n,m}= f^n \circ h \circ f^m$ it follows that $F_{n,m}$ is smoothly conjugate to $f^{n+m}\circ h$ via $f^{-m}$. It follows that for $n+m$ sufficiently large, $F_{n,m}$ is absolutely partially hyperbolic. Notice also, that if both $n$ and $m$ are large, one gets that the bundles of $F_{n,m}$ are as close as those of $f$ as one wants. 

\end{rema}

Using Lemma \ref{l.h-transverse} one gets. 

\begin{prop}\label{p.h-transverse} Let $f$ and $g$ be two partially hyperbolic diffeomorphisms on $M$ such that there is a $C^1$-continuous path 
$\{f_t\}_{t\in[0,1]}$ of partially hyperbolic diffeomorphisms so that $f_0=f$ and $f_1=g$.  Then there exists a diffeomorphism $h$ of $M$ such that 
$f$ is $h$-transverse to $g$. 

Moreover:
\begin{itemize}
\item  $h$ is isotopic to $f^N$ for some large $N$. In particular,
if $f$ (and therefore $g$) is isotopic to identity, then $h$ can be chosen to be isotopic to identity.
\item If all the diffeomorphisms in the path $\{f_t\}$ preserve a smooth volume $\mathrm{vol}_t$ which varies smoothly with $t$, then, one can choose $h$ so that it maps $\mathrm{vol}_0$ to $\mathrm{vol}_1$. 
\end{itemize} 
\end{prop}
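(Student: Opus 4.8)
The plan is to deduce this from Lemma~\ref{l.h-transverse} by reading the path of partially hyperbolic diffeomorphisms as a chain of mutually $h$-transverse maps. Besides Lemma~\ref{l.h-transverse}, I will use two standard inputs: the invariant bundles $E^{uu},E^{cs},E^{ss},E^{cu}$ of a partially hyperbolic diffeomorphism depend continuously (in the $C^0$ topology on subbundles) on the map taken in the $C^1$ topology; and Moser's trick, which from two $C^1$-close volume forms of equal total mass produces a conjugating diffeomorphism that is $C^1$-close to, and isotopic to, the identity.

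First I would establish the elementary observation that if $f,g$ are partially hyperbolic with $g$ sufficiently $C^1$-close to $f$, and $\theta$ is any diffeomorphism sufficiently $C^1$-close to the identity, then $f \xrightarrow[\theta]{} g$. Indeed $E^{uu}_g$ and $E^{cs}_g$ span $TM$, hence are transverse; transversality of continuous subbundles is an open condition; and $D\theta(E^{uu}_f)$ is $C^0$-close to $E^{uu}_g$ (respectively $D\theta\inv(E^{ss}_g)$ to $E^{ss}_g$) once $g$ is near $f$ and $\theta$ near the identity. Then, using compactness of $[0,1]$ and continuity of $t\mapsto f_t$, I would choose a subdivision $0=s_0<\dots<s_k=1$ fine enough that consecutive maps $f_{s_{i-1}},f_{s_i}$ fall under this observation, and apply Lemma~\ref{l.h-transverse} to the chain $f=f_{s_0},f_{s_1},\dots,f_{s_k}=g$ with all connecting maps equal to the identity. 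This yields exponents $n_i$ and a diffeomorphism $h=h_{\bar n}$ — an alternating composition of the (trivial) connecting maps with high powers $f_{s_i}^{\,n_{i+1}}$ of the intermediate maps — such that $f \xrightarrow[h]{} g$. Since the path $\{f_t\}$ is itself an isotopy, each $f_{s_i}$ is isotopic to $f_0=f$, so $h$ is isotopic to $f^N$ with $N$ the sum of the exponents $n_i$; in particular $h$ is isotopic to the identity when $f$ is.

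For the volume statement I would run the same argument but replace the trivial connecting maps by volume-adjusting ones. After rescaling each $\mathrm{vol}_t$ to unit total mass — harmless, since preserving $\omega$ is the same as preserving $c\,\omega$, and one renormalizes the conclusion at the end — the forms $\mathrm{vol}_t$ vary continuously, so on a fine subdivision consecutive forms $\mathrm{vol}_{s_{i-1}},\mathrm{vol}_{s_i}$ are $C^1$-close and Moser's trick gives $\theta_i$, isotopic and $C^1$-close to the identity, with $(\theta_i)_*\mathrm{vol}_{s_{i-1}}=\mathrm{vol}_{s_i}$. Refining the subdivision so that $f_{s_{i-1}} \xrightarrow[\theta_i]{} f_{s_i}$ holds (possible by the observation above, since $\theta_i$ is $C^1$-small), Lemma~\ref{l.h-transverse} now produces $h=\theta_k\circ f_{s_{k-1}}^{\,n_k}\circ\theta_{k-1}\circ\cdots\circ f_{s_1}^{\,n_2}\circ\theta_1$ with $f \xrightarrow[h]{} g$. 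Computing $h_*\mathrm{vol}_0$ by pushing forward through the composition from right to left — each power $f_{s_i}^{\,n_{i+1}}$ preserving $\mathrm{vol}_{s_i}$ and each $\theta_i$ carrying $\mathrm{vol}_{s_{i-1}}$ to $\mathrm{vol}_{s_i}$ — gives $h_*\mathrm{vol}_0=\mathrm{vol}_1$, and $h$ is still isotopic to $f^N$ because the $\theta_i$ are isotopic to the identity.

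I expect the only real subtlety to lie in the volume case: one must guarantee that the Moser corrections $\theta_i$ are genuinely $C^1$-small, which requires both the equal-mass normalization (a diffeomorphism cannot alter the total volume, so nothing can be done without it) and a subdivision fine enough that the $\mathrm{vol}_{s_i}$ are $C^1$-close. Shrinking the mesh further — to recover the transversality of the preliminary observation with $\theta=\theta_i$ — costs nothing, since it only brings consecutive maps and volume forms closer together, so the argument closes.
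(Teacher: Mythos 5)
Your argument is correct, and for the main statement it is the same as the paper's: both subdivide $[0,1]$ so that consecutive maps are $\mathrm{id}$-transverse (continuity of the invariant bundles plus openness of transversality) and then invoke Lemma~\ref{l.h-transverse}, with the isotopy class of $h$ read off from the fact that every $f_{s_i}$ is isotopic to $f$. For the volume statement your implementation differs from the paper's in a small but genuine way: the paper first uses Moser's trick to produce a \emph{path} $\varphi_t$ with $(\varphi_t)_*\mathrm{vol}_t=\mathrm{vol}_0$, conjugates the whole path to a $\mathrm{vol}_0$-preserving one, applies the first part verbatim (all connecting maps still the identity) to get a $\mathrm{vol}_0$-preserving $\hat h$, and finally corrects by $\varphi_1^{-1}$; you instead keep the original path and insert the Moser corrections $\theta_i$ locally as the connecting maps of the chain, which forces you to verify the $h$-transversality $f_{s_{i-1}}\rightsquigarrow f_{s_i}$ with $\theta_i$ in place of the identity — harmless, since $\theta_i$ is $C^1$-small once the mesh is fine and the masses are normalized. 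The two routes are equivalent in substance; the paper's version isolates all the Moser analysis in one global conjugation, while yours keeps the bookkeeping of where each $\mathrm{vol}_{s_i}$ goes explicit in the composition. Your remark about equal total mass is a point the paper leaves implicit (a diffeomorphism cannot change total volume, so the conclusion must be read up to the normalizing constant), and it is good that you flagged it.
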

\begin{proof} First notice that, by continuity of the bundles with respect to $t$, and by compactness of the parameters set $[0,1]$, there is $\varepsilon>0$ such
that for any $s,t\in [0,1]$ with $|t-s|\leq \varepsilon$ one has 
$$f_t \ \xrightarrow[\mathrm{id}]{} \ f_s.$$ 
Then, one applies the above transitivity lemma to obtain $h$-transversality. The fact that $h$ is isotopic to a power of $f$ is immediate from the lemma and the fact that all $f_t$ are isotopic to $f$. 

To see that one can choose $h$ to preserve volume, notice that one can find, using Moser's trick (see \cite[Theorem 5.1.27]{KH}), a path $\varphi_t$ of diffeomorphisms sending $\mathrm{vol}_t$ into $\mathrm{vol}_0$. If follows that the diffeomorphisms $\{ \varphi_t \circ f_t \circ \varphi_t^{-1}\}$ form a smooth path of diffeomorphisms which preserve $\mathrm{vol}_0$. So applying the previous reasoning to the path $\varphi_t \circ f_t \circ \varphi_t^{-1}$ which connects $f$ and $\hat g=\varphi_1 \circ g \circ \varphi_1^{-1}$ one obtains that $f$ is $\hat h$-transverse to $\hat g$ with $\hat h$ a $\mathrm{vol}_0$ preserving diffeomorphism. Because $\varphi_1$ conjugates $g$ and $\hat g$ we see that $h= \varphi_1^{-1} \circ \hat h$ is the posited diffeomorphism. 
\end{proof}

\begin{rema} 
We have worked in the specific setting that is needed for this paper. It is not hard to see that the concept of $h$-transversality can be generalized to give general statements about \emph{dominated splittings}, \emph{normal hyperbolicity}, etc. Moreover, in Proposition \ref{prop-criterion} one just need to ensure transversality of certain flags and domination between bundles; then, to obtain partial (or absolute partial, or $r$-normal) hyperbolicity, it is enough to have the property for just one of the diffeomorphisms involved. We leave these statements to the reader as they will not be used here. 
\end{rema}

\section{Geodesic flows on higher genus surfaces}\label{s.geodesic}
In this section we prove Theorem~\ref{t.geodesic}.
Let us first rewrite and reinterpret  in our language a key proposition of the proof of \cite[Theorems 2.8 and 2.10]{BGP}.
Recall that, if $\gamma$ is a simple closed curve on a surface $S$ then $\tau_\gamma$ denotes the element of $\cM(S)$ containing the Dehn twist along $\gamma$. 

\begin{prop}\label{p.BGP-geodesic} Consider a closed oriented surface $S$ and let $\gamma$ be the free homotopy class
of a simple closed curve in $S$. Then there exist
\begin{itemize}\item a hyperbolic metric $\mu$  on $S$ and
 \item  a volume preserving diffeomorphism
$h\colon T^1S\to T^1 S$ in the mapping class $\iota(\tau_\gamma)$,
\end{itemize} such that 
the time one map $f$ of the geodesic flow of $(S,\mu)$ is $h$-transverse to itself. 
\end{prop}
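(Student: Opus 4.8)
The plan is to reduce the statement to the geometric content of \cite[Proposition 2.9]{BGP}, and to explain how the $h$-transversality condition of Definition~\ref{defi-htran} is precisely a restatement of the conclusion there. First I would recall the structure of the tangent bundle of $T^1S$ under the geodesic flow: for the time-one map $f$ of the geodesic flow of a hyperbolic metric $(S,\mu)$, the manifold $T^1S$ is carried by an Anosov flow, so $TT^1S = E^{ss}_f\oplus E^c_f\oplus E^{uu}_f$ where $E^c_f$ is the flow direction and $E^{ss}_f,E^{uu}_f$ are the weak-stable-minus-flow and weak-unstable-minus-flow line bundles. Consequently $E^{cs}_f = E^{ss}_f\oplus E^c_f$ is the (two-dimensional) weak stable bundle of the flow, tangent to the weak stable foliation $\cW^{cs}$, and similarly $E^{cu}_f$ is the weak unstable bundle. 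So verifying $f \ \xrightarrow[h]{} \ f$ amounts to checking that $Dh(E^{uu}_f)$ is transverse to the weak stable foliation and $Dh^{-1}(E^{ss}_f)$ is transverse to the weak unstable foliation.

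Next I would invoke \cite[Proposition 2.9]{BGP} essentially verbatim: for the given free homotopy class $\gamma$ of a simple closed curve, that result produces a hyperbolic metric $\mu$ on $S$ and a volume-preserving diffeomorphism $h$ of $T^1S$, realizing the mapping class $\iota(\tau_\gamma)$, such that the image under $Dh$ of the strong unstable bundle of the geodesic flow is transverse to the weak stable foliation, and symmetrically for the strong stable bundle and the weak unstable foliation. (Concretely, $h$ is built from the projectivized Dehn twist supported near the torus over the closed $\mu$-geodesic in the class $\gamma$, and the key estimate in \cite{BGP} is exactly that this twist can be performed without destroying these transversalities, using that a Dehn twist on $T^1S$ along such a torus moves the relevant line fields in a controlled cone.) Translating the conclusion of that proposition into the language of Definition~\ref{defi-htran} gives exactly $f \ \xrightarrow[h]{} \ f$, and the volume-preservation and the mapping class are part of the statement of \cite[Proposition 2.9]{BGP}.

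The main point that needs care — and the step I expect to be the real obstacle if one does not simply cite \cite{BGP} — is the transversality estimate itself: one must choose the hyperbolic metric $\mu$ so that the closed geodesic in the class $\gamma$ is, together with a collar neighborhood, adapted to the geodesic flow in such a way that the Dehn-twist diffeomorphism $h$ shears the fibers of $T^1S$ only in directions that stay off the weak stable (resp. weak unstable) distribution. This is where the genus-$\ge 2$, hyperbolic geometry is used: the weak foliations of a geodesic flow on a hyperbolic surface have a precise description (via the boundary circle at infinity) which makes the required cone estimate checkable. I would therefore present the proof as: (i) identify the bundles and foliations of $f$ as above; (ii) quote \cite[Proposition 2.9]{BGP} for the existence of $\mu$ and $h$ with the stated transversality of the strong bundles against the weak foliations; (iii) observe that this transversality is literally the definition of $f\ \xrightarrow[h]{}\ f$, and that the remaining assertions (volume preservation, mapping class $\iota(\tau_\gamma)$) are recorded in \cite{BGP}. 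This keeps the new content purely at the level of repackaging, with the analytic difficulty isolated in the cited result.
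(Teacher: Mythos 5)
Your proposal matches the paper's treatment exactly: the paper gives no independent proof of this proposition, stating it as a direct citation of \cite[Proposition 2.9]{BGP} rephrased in the language of Definition~\ref{defi-htran}, which is precisely your reduction. Your identification of $E^{cs}_f$ and $E^{cu}_f$ with the weak stable and unstable bundles of the geodesic flow, and of the $h$-transversality condition with the transversality statement proved in \cite{BGP}, is the correct (and only) content of the repackaging.
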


\begin{proof}  
This follows from \cite[Proposition 2.9]{BGP}. In that proposition it is shown that one can choose a sequence of metrics $\mu_n$ in $S$ and a sequence of maps $h_n=D\rho_n$ in the class $\iota(\tau_\gamma)$ such that the bundles of the geodesic flow on $(S,\mu_n)$ are mapped by $h_n$ making arbitrarily small angle as $n\to\infty$ with themselves which implies $h_n$-transversality for a sufficiently large $n$ (this is called \emph{ph-respectful} in \cite{BGP}).


In \cite[Theorem 2.10]{BGP} it is explained how to modify the map to make it volume preserving using local coordinates around $\gamma$. 
\end{proof}

\begin{proof}[Proof of Theorem~\ref{t.geodesic}] Let $\varphi\in \cM(S)$. According to Dehn-Lickorish theorem (see for instance \cite[Theorem 4.1]{FaMa})  one can write 
$\varphi$ as a composition $\varphi=\tau_{\gamma_\ell}^{\varepsilon_\ell}\circ\cdots\circ \tau_{\gamma_1}^{\varepsilon_1}$,
where $\gamma_i$ are simple closed curves 
(not necessary disjoint nor distinct) and $\varepsilon_i\in\{-1,1\}$.

Proposition~\ref{p.BGP-geodesic}  yields hyperbolic metrics  $\mu_1, \dots,\mu_\ell$ on $S$ associated to $\gamma_i$, $i=1,\ldots \ell$, and volume preserving diffeomorphisms
$h_i\colon T^1S\to T^1S$, $i=1,\ldots \ell$, such that each time one map $f_i$ of the geodesic flow for $\mu_i$ is $h_i$-transverse to itself and $h_i$ is in the mapping class of $\iota(\tau_{\gamma_i}^{\eps_i})$. 

Recall that the Teichm\"uller space of hyperbolic metrics on $S$ is diffeomorphic to an open ball and, hence, is path connected (see \eg~\cite[Chapter 10]{FaMa}). Thus $f_i$ is isotopic to $f_{i+1}$ through a path of
time-one maps of geodesic flows which correspond to a path of hyperbolic metrics connecting $\mu_i$ to $\mu_{i+1}$. Clearly the path connecting $f_i$ to $f_{i+1}$ consists of volume preserving partially hyperbolic diffeomorphisms
(the preserved volume varying smoothly with the parameter). 

By Proposition~\ref{p.h-transverse}, we obtain diffeomorphisms $g_i\colon T^1S\to T^1S$ 
such that:
\begin{itemize}
\item $f_i$ is $g_i$-transverse to $f_{i+1}$,
\item $g_i$ sends the Liouville volume of 
$\mu_i$ to that of $\mu_{i+1}$, for $i\in \ZZ/\ell\ZZ$, 
\item $g_i$ is isotopic to identity.\end{itemize} 

Therefore we have

\[ \noindent  f_1 \ \xrightarrow[h_1]{} \ f_1 \ \xrightarrow[g_1]{} \ f_2 \ \xrightarrow[h_2]{} \ f_2 \ \xrightarrow[g_2]{}\cdots  \xrightarrow[g_{\ell-1}]{}  \ f_\ell \ \xrightarrow[h_\ell]{} \ f_\ell \ \xrightarrow[g_\ell]{} \ f_1  \]

Now, according to Proposition \ref{prop-criterion} (and Lemma \ref{l.h-transverse}), for every large $m$ the diffeomorphism
$$F=f_1^m\circ g_\ell\circ f_\ell^m\circ h_\ell \circ \cdots\circ h_2\circ f_2^m\circ g_1\circ f_1^m\circ h_1$$
is absolutely partially hyperbolic. By construction it is volume preserving. Finally, as the $g_i$ and the $f_i$ are isotopic to identity
the isotopy class of $F$ is the same as that of $h_\ell \circ\cdots\circ h_1$. Hence the mapping class of $F$ is given by
$\iota(\tau_{\gamma_\ell}^{\eps_\ell})\circ\cdots\circ\iota(\tau_{\gamma_1}^{\eps_1})=\iota(\varphi)$ as $\iota$ is a group homomorphism. 

Finally, according to \cite{BMVW,HHU}, there exists a small perturbation of $F$ which is stably ergodic, which completes  the proof.
\end{proof}

\begin{rema}\label{rema-metric}
The proof can be adapted to give that for any given hyperbolic metric $\mu$ with geodesic flow $\cG_t$ and $\varphi \in \cM(S)$ there exists a volume preserving diffeomorphism $h$ in $\iota(\varphi)$ so that $\cG_t$ is $h$ transverse to itself. 
\end{rema}

\begin{rema}\label{rema-circle} If we choose a reducible mapping class, \ie if it leaves invariant a simple closed geodesic, then one can apply the same reasoning as in \cite[Proposition 2.12]{BGP} to make a robustly transitive and stably ergodic perturbation of the example. Robust transitivity can also be obtained in other contexts as we will explain in Subsection \ref{ss.minimal}.
\end{rema}

\begin{rema}
Every closed surface $S$ admits a metric of constant negative curvature with an orientation reversing isometry, and this lifts to an $h$-transversality for the geodesic flow of the metric with itself. This allows us to extend Theorem \ref{t.geodesic}  to the case of an orientation reversing mapping classes $\tau$ of $\cM(S)$. Note that in this case $\iota(\tau)$ is orientation preserving because the derivative reverses orientation on the fibers.
\end{rema}

\begin{rema} It was noted to us by Livio Flamino that the above proof also admits the following variant. Equip $S$ with hyperbolic metrics $\mu_1$ and $\mu_2$ such that $\phi:(S,\mu_1)\to (S,\mu_2)$ is an isometry. Clearly $\phi$ defines an $h$-transversality for corresponding geodesic flows. Then connect $\mu_1$ and $\mu_2$ by a path of hyperbolic metrics. Then similarly to the proof above, by subdividing the path into small intervals and using $id\colon S\to S$ as $h$-transversalities, one realizes the mapping class of $\varphi$ by a partially hyperbolic diffeomorphism.
\end{rema}

%
%
\subsection{Some obstructions}\label{ss.obstruction} 

Recall (see~\cite[Chapter 25]{Johannson}) that the mapping class group of $T^1S$ fits into the following short exact sequence.

$$ 1 \to H_1(S,\ZZ) \to \cM(T^1S) \to \cM(S) \to 1 .$$
Theorem \ref{t.geodesic} implies that the set of mapping classes of $T^1S$ which are realized by partially hyperbolic diffeomorphisms surjects onto the whole $\cM(S)$. Here we show the following.

\begin{teo}
If a diffeomorphism $f \colon T^1S \to T^1S$ is partially hyperbolic and its mapping class projects to the identity in $\cM(S)$, then $f$ is isotopic to identity.
\end{teo}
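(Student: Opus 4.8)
The plan is to use the exact sequence recalled above to turn the conclusion into the vanishing of a cohomology class, and then to feed in the structural description of partially hyperbolic diffeomorphisms of $T^1S$ from \cite{HaPS}. Since the mapping class of $f$ projects to the identity in $\cM(S)$, it lies in the subgroup $H_1(S,\ZZ)\cong\ZZ^{2g}$ of $\cM(T^1S)$, and ``$f$ is isotopic to the identity'' means precisely that this class is trivial. Recall that $\pi_1(T^1S)$ is a central extension $1\to\langle z\rangle\to\pi_1(T^1S)\to\pi_1(S)\to1$ with $z$ the fiber class generating the (infinite cyclic) center, and that $\cM(T^1S)$ embeds into $\operatorname{Out}(\pi_1(T^1S))$ (this follows from the exact sequence together with the Dehn--Nielsen--Baer theorem for $\cM(S)$ and the fact that the elements of $H_1(S,\ZZ)$ act on $\pi_1(T^1S)$ by non-inner automorphisms). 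Under this description an element of $H_1(S,\ZZ)\subset\cM(T^1S)$ acts on $\pi_1(T^1S)$ by an automorphism $\gamma\mapsto\gamma\,z^{c(\gamma)}$ for a homomorphism $c\colon\pi_1(T^1S)\to\ZZ$ killing $z$; hence $c$ is induced by a class $\bar c\in H^1(S;\ZZ)\cong H_1(S;\ZZ)$, this class is exactly the class of $f$, and it vanishes if and only if the automorphism is inner (because $\pi_1(S)$ is centerless). So it is enough to show that $f$ induces an inner automorphism of $\pi_1(T^1S)$. As $H_1(S,\ZZ)$ is abelian, contains $[f]$, and is torsion free, the class of $f^n$ is $n$ times that of $f$, so it suffices to prove this for some positive iterate of $f$; we may therefore freely replace $f$ by an iterate --- and, should \cite{HaPS} require it, by a finite lift, the induced map on the relevant kernels being injective --- so that in addition $f$ preserves orientations of the center bundle and of the fibers of the (essentially unique) Seifert fibration and carries invariant coorientable branching foliations in the sense of \cite{BI}.

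\textbf{Step 2: the dynamical input.} Here I would invoke \cite{HaPS}: after this reduction, $f$ is leaf conjugate, through a homeomorphism $h$ homotopic to the identity, to the time-one map of a transitive topological Anosov flow $\phi$ on $T^1S$ (which on $T^1S$ one may take orbit equivalent to a geodesic flow). Then $g:=h\circ f\circ h^{-1}$ maps every orbit of $\phi$ to itself, and partial hyperbolicity forces it to move every point strictly forward along $\phi$, so $g(x)=\phi_{\tau(x)}(x)$ for a positive continuous $\tau$; such a $g$ is isotopic to the identity via $x\mapsto\phi_{s\tau(x)}(x)$, $s\in[0,1]$. Since $h$ is homotopic to the identity, $f$ and $g$ induce the same outer automorphism of $\pi_1(T^1S)$, which is therefore trivial; by Step 1, $f$ is isotopic to the identity.

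\textbf{Main obstacle.} The algebra of Step 1 and the final isotopy in Step 2 are routine; the real content is the structure theorem \cite{HaPS} and, above all, the passage in Step 2 from a statement phrased up to \emph{leaf} conjugacy to one about \emph{mapping classes}. One has to check that the leaf equivalence produced by \cite{HaPS} can be taken homotopic to the identity (it can, being assembled from the $f$-invariant branching foliations) and --- the crucial point --- that $g$ is genuinely of discretized-Anosov-flow type, i.e.\ that a partially hyperbolic diffeomorphism whose mapping class is a vertical Dehn twist cannot act on its center leaves in some more complicated fashion. Certifying that the model lies in the identity component of $\mathrm{Diff}(T^1S)$ is precisely what must be extracted from the classification in \cite{HaPS}; granting it, Step 1 closes the argument.
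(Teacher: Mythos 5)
Your Step 1 is a correct, if heavy, algebraic reformulation: an element of the kernel $H_1(S,\ZZ)\subset\cM(T^1S)$ is trivial if and only if the induced outer automorphism of $\pi_1(T^1S)$ is inner (using that $\pi_1(S)$ is centerless), and passing to iterates is harmless since the kernel is torsion-free. The genuine gap is in Step 2, where the entire dynamical content of the theorem is delegated to a classification statement that is not what \cite{HaPS} provides and that cannot be invoked here without circularity. What \cite{HaPS} gives (and what the paper actually uses) is only that the center-stable \emph{branching} foliation of $f$ can be isotoped to be horizontal, i.e.\ transverse to the Seifert fibers. The ``leaf conjugacy, via $h$ homotopic to the identity, to the time-one map of a topological Anosov flow'' is a much stronger statement of discretized-Anosov-flow type, and in every form in which such results exist they are proved under the hypothesis that $f$ is \emph{homotopic to the identity} --- which is exactly the conclusion you are trying to reach. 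For an $f$ whose class is a nontrivial vertical Dehn twist there is no a priori reason that $g=h\circ f\circ h^{-1}$ should preserve each orbit of $\phi$; excluding that possibility \emph{is} the theorem. You flag this yourself (``the crucial point \dots is precisely what must be extracted from the classification''), but granting it amounts to assuming the statement to be proved.

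The paper closes exactly this gap by a direct coarse-geometric argument which you could substitute for Step 2. Since the branching foliation is horizontal, any two leaves of its lift to $\widetilde{T^1S}$ stay at bounded Hausdorff distance from one another (otherwise they would project to topologically crossing leaves downstairs). One chooses a lift $\tilde f$ that moves each fiber of $\tilde p$ a bounded distance, by lifting the isotopy from a fiber-preserving model. If $[f]$ were a nontrivial vertical Dehn twist along a class $c$, take a closed geodesic $\gamma$ representing $c$ and lift it into a leaf $L$ of the lifted branching foliation; the lift of the vertical Dehn twist moves $\tilde\gamma$ unboundedly far, hence so does $\tilde f$ (which is a bounded distance from it), contradicting $d^{\cH}(L,\tilde f(L))<\infty$. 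No orbit-by-orbit or leaf-conjugacy structure of $f$ is needed, only the horizontality of the branching foliation.
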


The proof assumes familiarity with branching foliations~\cite{BI} and uses a result from~\cite{HaPS}. 

\begin{proof}
    Let $z$ be the element of $\pi_1(T^1S)$ given by a path around a fiber of
    the circle fibering.
    Equivalently, $z$ is a generator for the cyclic subgroup which is the center of $\pi_1(T^1S)$.
    Every element of $\pi_1(T^1S)$ may then be written in the form $[\gam'] \cdot z^k$
    where $k$ is an integer and
    the curve $\gam'$ in $T^1S$ is the derivative of a closed geodesic $\gam$
    in $S$.
    Up to replacing $f$ with $f^2$, we may assume $f_* (z) = z$ and so if $f_*$ is not the identity,
    there is a closed geodesic $\gam$ such that $f_*([\gam']) = [\gam'] \cdot z^k$ for some non-zero $k$.

    By \cite[Section 5.2]{HaPS} we know that the center-stable branching foliation $\cF^{cs}_{bran}$ for $f$ is horizontal.
    To be precise, we may first approximate $\cF^{cs}_{bran}$ by a true foliation
    $\cF_{\epsilon}$ where every leaf of the true foliation is isotopic to a leaf of the
    branching foliation and vice versa.
    Then, there exists a smooth conjugacy which is isotopic to identity and which puts $\cF_{\epsilon}$ transverse
    to the fibers of the circle fibering.
    Using a semiconjugacy result of Matsumoto \cite{Matsumoto},
    one may further show the following:
    for every leaf $\mathcal{L}$ of the weak stable foliation of the geodesic
    flow on $T^1S$,
    there is a leaf $L$ of $\cF^{cs}_{bran}$ isotopic to $\mathcal{L}$
    and vice versa.
    In particular, one of the leaves $L$ of $\cF^{cs}_{bran}$
    is isotopic to the weak stable leaf containing $\gam'$ and this implies that $L$
    itself contains a closed curve isotopic to $\gam'$.

    Both the leaf $L$ and its image $f(L)$ are cylinders
    and $\pi_1(f(L))$ is the cyclic subgroup of $\pi_1(T^1S)$
    generated by $f_*([\gam']) = [\gam'] \cdot z^k$.
    However, the semiconjugacy implies that $f(L)$ is isotopic
    to a leaf of the weak stable foliation of the geodesic flow.
    Since no such leaf has such a fundamental group, we arrive at a
    contradiction.
\end{proof}

\begin{coro}
The set of mapping classes realized by partially hyperbolic diffeomorphisms does not form a subgroup of $\cM(T^1S)$. 
\end{coro}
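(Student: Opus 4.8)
The plan is to use two facts: partial hyperbolicity is invariant under topological conjugacy, so the set $\cP\subset\cM(T^1S)$ of mapping classes admitting a partially hyperbolic representative is invariant under conjugation in $\cM(T^1S)$ (if $f$ is partially hyperbolic and $g$ is any diffeomorphism of $T^1S$, then $gfg\inv$ is partially hyperbolic and represents $[g][f][g]\inv$). Consequently, \emph{if} $\cP$ were a subgroup it would be a \emph{normal} subgroup, and it is this that we aim to contradict. Write $\pi\colon\cM(T^1S)\to\cM(S)$ for the surjection in the short exact sequence above, so that $\ker\pi=H_1(S,\ZZ)$, recall that $\iota$ is a section of $\pi$, and recall that the induced conjugation action of $\iota(\cM(S))$ on $\ker\pi$ is the standard action of $\cM(S)$ on $H_1(S,\ZZ)$ (see~\cite[Chapter 25]{Johannson}).

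The two inputs are then the following. First, by the theorem above, a partially hyperbolic diffeomorphism whose mapping class lies in $\ker\pi$ is isotopic to the identity, so
$$\cP\cap\ker\pi=\cP\cap H_1(S,\ZZ)=\{1\}.$$
Second, by Theorem~\ref{t.geodesic}, for every simple closed curve $\gamma\subset S$ the class $\iota(\tau_\gamma)$ belongs to $\cP$.

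Now suppose $\cP$ is a subgroup, hence a normal subgroup of $\cM(T^1S)$. Since $H_1(S,\ZZ)=\ker\pi$ is also normal and $\cP\cap H_1(S,\ZZ)=\{1\}$, the two normal subgroups commute elementwise, so every element of $\cP$ centralizes $H_1(S,\ZZ)$. On the other hand, take $\gamma$ to be a non-separating simple closed curve, so that $[\gamma]\neq 0$ in $H_1(S,\ZZ)$; then conjugation by $\iota(\tau_\gamma)\in\cP$ acts on $H_1(S,\ZZ)=\ker\pi$ as the transvection $(\tau_\gamma)_*\colon c\mapsto c\pm\langle c,[\gamma]\rangle[\gamma]$, which is not the identity (choose $c$ with $\langle c,[\gamma]\rangle=1$). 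This contradicts the fact that $\iota(\tau_\gamma)$ centralizes $H_1(S,\ZZ)$, and hence $\cP$ is not a subgroup.

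I do not expect a serious obstacle: all the substantial work is already packaged in Theorem~\ref{t.geodesic} and in the preceding theorem (which rely respectively on \cite{BGP}, and on \cite{HaPS} together with branching foliations). The only things to verify are the (immediate) conjugation-invariance of $\cP$ and the identification of the conjugation action of $\iota(\cM(S))$ on $\ker\pi$ with the non-trivial $\cM(S)$-action on $H_1(S,\ZZ)$, which is where the contradiction comes from. As an alternative formulation, one could argue via normal closures: a conjugation-invariant subgroup containing $\iota(\cM(S))$ contains the normal closure of $\iota(\cM(S))$, and conjugating the classes $\iota(\tau_\gamma)$ by vertical Dehn twists already produces all of $H_1(S,\ZZ)$, so this normal closure is all of $\cM(T^1S)$; that would force $\cP=\cM(T^1S)$, again contradicting the preceding theorem since nontrivial vertical Dehn twists are not realizable.
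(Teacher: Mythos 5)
Your proof is correct and follows essentially the same route as the paper: both arguments combine conjugation-invariance of $\cP$, the preceding theorem (which gives $\cP\cap\ker\pi=\{1\}$), Theorem~\ref{t.geodesic} (which gives $\iota(\tau_\gamma)\in\cP$), and the non-trivial action of $\iota(\tau_\gamma)$ on $H_1(S,\ZZ)$. The paper packages the contradiction by exhibiting the two distinct classes $[f_\alpha]$ and $[v_\beta f_\alpha v_\beta^{-1}]$ over the same class of $\cM(S)$, whereas you phrase it via the standard fact that two normal subgroups with trivial intersection commute elementwise — the nontrivial commutator is the same in both cases.
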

\begin{proof}
    It is enough to show that one can create different mapping classes projecting to the same mapping class in $\cM(S)$. Then, by composing one with the inverse of the other one obtains a non-trivial mapping class projecting to the identity, which would yield a contradiction. To see that one has examples in such mapping classes one just has to observe that Theorem~\ref{t.geodesic} produces mapping classes projecting into all of $\cM(S)$ and that the mapping classes which can be realized by partially hyperbolic diffeomorphisms are closed under conjugacy. For example, one can take two homologically non-trivial curves $\alpha$ and $\beta$ in $S$ whose intersection number is one. Then denote by $f_\alpha\colon T^1S\to T^1S$ a partially hyperbolic diffeomorphism in the mapping class of the (differential of) the Dehn twist along $\alpha$ and denote by $v_\beta\colon T^1S\to T^1S$ a diffeomorphism whose mapping class projects to the identity in $\cM(S)$ but for which $\beta'$ is not homotopic to $v_\beta(\beta')$.
    Then $f_\alpha$ and $v_\beta\circ f_\alpha\circ v_\beta^{-1}$ are partially hyperbolic diffeomorphisms from different mapping classes which project to the same mapping class in $\cM(S)$.
\end{proof}

\section{Anosov flows transverse to tori}\label{s.transverse}
In this section we prove Theorem \ref{t.transverse}.

\subsection{Dehn twists in three dimensions}\label{ss-dehn}

Here, we introduce a three-dimensional analogue
of two-dimensional Dehn twists.
Consider a two-dimensional torus $T$ embedded in a
three-dimensional manifold $M.$
Let $U$ be a tubular neighbourhood of this torus.
Then we may realize $U$ as the image of an embedding
\begin{math}
    i : \mathbb{T}^2 \times (-1,+1) \to M
\end{math}
where $\mathbb{T}^2$ is the standard 2-torus given
by the quotient $\mathbb{R}^2 / \mathbb{Z}^2.$
Further assume that $i$ maps $\mathbb{T}^2 \times \{0\}$ to $T.$
Consider now a curve $\gamma$ embedded in $T.$
After a homotopy, we can reduce to the case where
$\gam$ is the image of a linear curve in $\mathbb{T}^2.$
That is, there is a vector $(p,q) \in \mathbb{Z}^2$
such that $\gamma$ is the image under $i$ of the set
\[
    \bigg\{ (t p, t q, 0) : t \in [0,1] \bigg\}
    \subof \mathbb{T}^2 \times (-1,+1)
\]
We now define
\emph{the Dehn twist along $T$ directed by $\gamma$}.
This will be an element of the mapping class group of $M$
and so it is enough to define a diffeomorphism
$\tau : M \to M$
representing this element.
In fact, this map will be the identity outside of $U$
and so we first define a Dehn twist on $\mathbb{T}^2 \times (-1,+1).$
Let $h : (-1,+1) \to [0,1]$ be a smooth bump function
such that $h(z) = 0$ for $z < - \tfrac{1}{2}$
and $h(z) = 1$ for $z > \tfrac{1}{2}.$
Then define $\phi : \mathbb{T}^2 \times (-1,+1) \to \mathbb{T}^2 \times (-1,+1)$
by
\[
    \phi(x,y,z) = (x + p h(z), y + q h(z), z).
\]
Finally, define $\tau : M \to M$ by
\begin{math}
    \tau(v) = i \circ \phi \circ i \inv(v)
\end{math}
if $v \in U,$ and $\tau(v) = v$ otherwise.

\subsection{Anosov flows transverse to tori}\label{ss.covering}
Let $X$ be an Anosov vector field on a 3-manifold $M$ and let $T\subset M$ be a torus transverse to $X$. It is well known that $T$ must be incompressible~\cite{Br}. A systematic study of Anosov flows transverse to tori has been recently carried out in~\cite{BBY}, yet some questions still remain open. 

\subsection{Connecting different vector fields} 

The following proposition is immediate. We state it to emphasize the parallel between this case and the proof of Theorem \ref{t.geodesic} where a analogous statement was obtained using the connectedness of the Teichm\"uller space. 

\begin{prop}\label{prop.isotopicfunct} 
Let $X$ be a smooth Anosov vector field on $M$ and let $\rho_1, \rho_2: M \to (0,1] $ be smooth functions. Then, the time one maps of the flows generated by the vector fields $\rho_1 X$ and $\rho_2 X$ are isotopic through partially hyperbolic diffeomorphisms. 
\end{prop}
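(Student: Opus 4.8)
The plan is to show that for any smooth $\rho\colon M\to(0,1]$, the time-one map of the flow of $\rho X$ is partially hyperbolic, and that the family of such maps, as $\rho$ varies, gives the desired isotopy through partially hyperbolic diffeomorphisms. First I would recall that $X$ itself is an Anosov vector field, so its time-one map $\phi^1_X$ is partially hyperbolic with one-dimensional center spanned by $X$, stable bundle $E^{ss}_X$ and unstable bundle $E^{uu}_X$. The key classical fact I would invoke is that reparametrizing an Anosov flow by a positive function preserves the Anosov property: the flow of $\rho X$ has the same orbits as the flow of $X$ (only the speed along orbits changes), hence the same weak-stable and weak-unstable foliations, and it is again an Anosov flow (see e.g.\ the standard discussion of time changes for Anosov flows). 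Consequently the time-one map of $\rho X$ is again partially hyperbolic, with center direction still $\RR X$ and with (weak) stable/unstable bundles $E^{cs}$, $E^{cu}$ equal to those of $X$; only the contraction/expansion rates are modified by the time change.

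Next I would build the path. Set $\rho_t = (1-t)\rho_1 + t\rho_2$ for $t\in[0,1]$; since $\rho_1,\rho_2$ take values in $(0,1]$, each $\rho_t$ is smooth and takes values in $(0,1]$, and $\rho_t$ varies smoothly with $t$. Let $f_t$ denote the time-one map of the flow generated by $\rho_t X$. By the previous paragraph each $f_t$ is partially hyperbolic, and $f_0,f_1$ are the time-one maps of $\rho_1 X$, $\rho_2 X$ respectively. The map $(t,x)\mapsto f_t(x)$ is smooth because the vector field $\rho_t X$ depends smoothly on $(t,x)$ and solutions of ODEs depend smoothly on parameters; hence $\{f_t\}_{t\in[0,1]}$ is a $C^1$-continuous path of diffeomorphisms. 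So $\{f_t\}$ is a path of partially hyperbolic diffeomorphisms connecting the two time-one maps, which is exactly the claim.

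The main point to be careful about — and the step I expect to need the most attention — is justifying that a positive reparametrization of an Anosov flow stays Anosov with a partially hyperbolic time-one map, \emph{uniformly along the path}. Pointwise this is classical, but for Proposition~\ref{p.h-transverse} and its uses one wants the partial hyperbolicity to persist for every $t$, which follows from the same cone-field argument: one can take a single cone field around $E^{uu}_X$ (resp.\ around $E^{ss}_X$) that works for all $\rho$ with values in a fixed compact subinterval of $(0,\infty)$, because the orbit foliations are independent of the time change and the only effect is on the rates, which stay uniformly hyperbolic by compactness of $M$ and of $[0,1]$. Alternatively, one may observe directly that the weak-stable and weak-unstable distributions of $\rho X$ coincide with those of $X$ and that the time-one map still contracts $E^{ss}$ and expands $E^{uu}$ uniformly (after possibly passing to a power, or simply because $\rho\ge\inf\rho>0$), which gives the dominated splitting $E^{ss}\oplus\RR X\oplus E^{uu}$ needed. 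Once this uniformity is in hand the rest is routine, which is why the proposition is labelled "immediate''; its role here is purely to mirror, in the transverse-tori setting, the use of path-connectedness of Teichm\"uller space in the proof of Theorem~\ref{t.geodesic}.
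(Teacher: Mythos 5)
Your argument is correct and is precisely the one the paper has in mind: the paper states this proposition without proof, calling it immediate, and the intended justification is exactly your combination of (i) the classical fact that a positive time change of an Anosov flow is Anosov (same weak foliations, only rates and strong bundles adjusted), so each time-one map is partially hyperbolic, and (ii) the convex-combination path $\rho_t=(1-t)\rho_1+t\rho_2$, which stays in $(0,1]$ and yields a $C^1$-continuous family by smooth dependence of ODE solutions on parameters. Your closing remarks on uniformity along the path are also the right thing to keep in mind for the subsequent application of Proposition~\ref{p.h-transverse}.
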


As an immediate consequence of this proposition and Proposition \ref{p.h-transverse} we have the following.

\begin{coro}\label{coro-htransv}
Let $X$ be a smooth Anosov vector field on $M$ and let $\rho_1, \rho_2: M \to (0,1]$ be smooth functions. Then, there exists a diffeomorphism $g: M \to M$ such that the time one map of the flow of $\rho_1 X$ is $g$-transverse to the time one map of the flow of $\rho_2X$.  Moreover, $g$ is isotopic to identity.
\end{coro}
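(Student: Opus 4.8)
The plan is simply to feed Proposition~\ref{prop.isotopicfunct} into Proposition~\ref{p.h-transverse}. Write $F_1$ and $F_2$ for the time-one maps of the flows generated by $\rho_1 X$ and $\rho_2 X$. By Proposition~\ref{prop.isotopicfunct} there is a path of partially hyperbolic diffeomorphisms of $M$ joining $F_1$ to $F_2$; concretely one takes $\{f_t\}_{t\in[0,1]}$ to be the time-one map of the vector field $\rho_t X$, where $\rho_t = (1-t)\rho_1 + t\rho_2$ again takes values in $(0,1]$. Since $\rho_t X$ is a positive time-change of $X$ it is again Anosov, so $f_t$ is partially hyperbolic for every $t$, with one-dimensional center tangent to the flow direction; and the time-one map of a smooth flow depends smoothly (hence $C^1$-continuously) on the generating vector field, so $\{f_t\}$ is a $C^1$-continuous path with $f_0 = F_1$ and $f_1 = F_2$.

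Next I would apply Proposition~\ref{p.h-transverse} to the pair $(F_1,F_2)$ together with this path $\{f_t\}$. It yields a diffeomorphism $g$ of $M$ such that $F_1 \ \xrightarrow[g]{} \ F_2$, which is the first assertion of the corollary. For the last assertion, note that $F_1$, being the time-one map of the flow of $\rho_1 X$, is isotopic to the identity via the flow isotopy $t\mapsto \phi^t_{\rho_1 X}$, $t\in[0,1]$. Hence the ``in particular'' clause in the first bullet of Proposition~\ref{p.h-transverse} applies (if $f$, and therefore $g$, is isotopic to the identity, then the produced diffeomorphism can be chosen isotopic to the identity), and we may take $g$ isotopic to the identity.

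The argument is entirely formal once the two propositions are in hand, so there is no real obstacle; the single point deserving a word is that the interpolating vector fields $\rho_t X$ stay Anosov, which is what makes $\{f_t\}$ a genuine path of partially hyperbolic diffeomorphisms. This is precisely the content already recorded in Proposition~\ref{prop.isotopicfunct}, so no additional work is needed — indeed this is why the corollary is labelled ``immediate.''
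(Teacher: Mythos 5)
Your argument is correct and is exactly the paper's intended route: the paper derives the corollary immediately from Proposition~\ref{prop.isotopicfunct} (realized, as you do, by the convex interpolation $\rho_t=(1-t)\rho_1+t\rho_2$, which stays in $(0,1]$ and keeps the flow Anosov) combined with Proposition~\ref{p.h-transverse}, with the isotopy-to-identity clause supplied by the flow isotopy of the time-one map. Nothing is missing.
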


\begin{rema}
If $X$ is volume preserving, then, again by Proposition~\ref{p.h-transverse},  one can choose $g$ so that it sends the volume preserved by $\rho_1X$ to the volume preserved by $\rho_2 X$. 
\end{rema}

\subsection{Coordinates in flow boxes}

We consider an Anosov vector field $X$ transverse to a torus $T$.  Without loss of generality we can assume that we have chosen a metric in $M$ such that $X$ is everywhere orthogonal to $T$ and such that $\|X(x)\|=1$ for all $x \in M$. 

Since $X$ is transverse to $T$ we obtain that the foliations $W^{cs}_{X}$ and $W^{cu}_{X}$ induce (transverse) foliations $F^{s}_{X}$ and $F^{u}_{X}$ on $T$ by taking the intersections.

We can consider coordinates $\theta_T\colon T \to \TT^2$ where $\TT^2= \RR^2 /_{\ZZ^2}$ is equipped with the
usual $(x,y)$-coordinates ($mod \ 1$). 
We denote by $F^s$ and $F^u$ the foliations $\theta_T(F^{s}_{X})$ and $\theta_T(F^{u}_{X})$, respectively.

For each $\eta>0$ consider a manifold $\hat M_{\eta}$ diffeomorphic to $M$ with an Anosov vector field $\hat X^\eta$ obtained as follows: 

\begin{itemize}
\item first cut $M$ along $T$ to obtain a 3-manifold $M'$ with two boundary components $T_0$ and $T_1$ such that the vector field $X$ points outwards on $T_0$ and inwards on $T_1$,
\item glue in a flow box $\cU_\eta=[0,\eta] \times T$ to $M'$ so that $\{0\}\times T$ glues to $T_0$ and $\{\eta\}\times T$ glues to $T_1$, 
\item define $\hat X^\eta$ to be the unit horizontal vector field on $[0,\eta] \times T$ which glues well with $X$. 
\end{itemize}

Denote by $\hat T$  the torus $T_0 \cong \{0\} \times T$ in $\hat M_\eta$ which can be identified with $T$ in the obvious way. We will simply write $\hat X$ for $\hat X^\eta$ and we will write $\hat X_t$ for the time $t$ map of the flow generated by $\hat X$. It is straightforward to check that the flow generated by $\hat X$ is Anosov (even if the constants worsen as $\eta$ increases). The fact that $\hat M_\eta$ is diffeomorphic to $M$ is also immediate because $T$ has a neighborhood in $M$ diffeomorphic to $(0,1) \times T$. 

The following proposition will be helpful to choose nice deformations of the flow in order to apply Proposition \ref{prop-criterion} and Corollary \ref{coro-htransv}. 

\begin{prop}\label{prop.function}
There exists a smooth function $\rho_\eta: M \to (0,1]$ such that the vector field $\rho_\eta  X$ is smoothly conjugate to $\hat X^\eta$. 
\end{prop}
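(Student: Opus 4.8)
The plan is to exhibit the conjugacy very explicitly by comparing the time it takes the flow of $X$ to cross the slab $T\times(0,1)$ near $T$ with the time $\eta$ it takes the horizontal unit field to cross the flow box $\cU_\eta=[0,\eta]\times T$. First I would fix the neighborhood $T\times(0,1)$ of $T$ in $M$ coming from transversality of $X$ to $T$, together with the metric normalization $\|X\|\equiv1$ and $X\perp T$, so that $X$ restricted to this neighborhood is, up to reparametrization, the unit vector field crossing $T\times(0,1)$ from one side to the other. I want to replace $X$ by $\rho_\eta X$ with $\rho_\eta\colon M\to(0,1]$ equal to $1$ outside a slightly smaller slab $T\times(\delta,1-\delta)$, and on that slab chosen so that the flow of $\rho_\eta X$ crosses $T\times(0,1)$ in total time exactly $\eta$ (with $\eta>1$ this forces $\rho_\eta\le 1$, which is consistent with the codomain $(0,1]$ and explains the hypothesis).

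Concretely, fix a smooth coordinate $s\in(0,1)$ on the normal direction of the slab so that $X=\partial_s/\|\partial_s\|$ there, and let $\sigma(s)$ denote the $X$-arc-length parameter. The crossing time of $X$ is then some fixed $a=\sigma(1^-)-\sigma(0^+)$. I would pick a smooth increasing reparametrization of $s$, supported away from the endpoints, scaling the transit time from $a$ to $\eta$; the derivative of this reparametrization (pulled back along the flow) defines $\rho_\eta$ on the slab, extended by $1$ elsewhere. Smoothness and $\rho_\eta\in(0,1]$ need a routine check, and one must verify that $\rho_\eta$ can be made to depend only on the transverse coordinate near $T$ (constant along $T$), which is what allows the gluing to match. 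Then the map $\Phi$ that sends a point $\hat X_t(z)$, $z\in \hat T$, $t\in[0,\eta]$, to the point on the corresponding $\rho_\eta X$-orbit at the same rescaled time, and is the identity outside the slab, is the desired conjugacy: it is a diffeomorphism $\hat M_\eta\to M$ by construction, and $\Phi_*\hat X^\eta=\rho_\eta X$ because both fields are time-one-parametrized reparametrizations of the same unparametrized flow lines matched up by $\Phi$.

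The main obstacle I anticipate is not the dynamics but the \emph{smoothness of the gluing at the two boundary tori} $T_0,T_1$: one must arrange that $\rho_\eta$, the reparametrization, and hence $\Phi$, are $C^\infty$ across $\hat T$ and across $\{\eta\}\times T$, i.e.\ that all the bump/transition functions are flat to infinite order where needed, and that the chosen coordinates on the collar of $T$ in $M$ glue smoothly with the product coordinates on $\cU_\eta$. This is standard collar-neighborhood bookkeeping but is the only genuinely delicate point. I would also note, for later use in the paper, that this construction leaves $X$ (hence its Anosov-ness, and any invariant volume, after a further Moser correction if desired) untouched outside an arbitrarily small neighborhood of $T$, so $\rho_\eta$ can be taken $C^1$-close to a function that is identically $1$ away from $T$; this is exactly the flexibility needed to feed Proposition~\ref{prop-criterion} and Corollary~\ref{coro-htransv}.
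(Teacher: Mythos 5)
Your construction is essentially the paper's own proof: the paper also compresses a flow-collar of $T$ via an increasing reparametrization $\psi$ of the flow-time coordinate with $\psi'\le 1$ and $\psi-x$ flat to infinite order at the endpoints, takes $\rho_\eta$ to be the resulting derivative (extended by $1$ elsewhere), and uses the product map $\psi\times\mathrm{id}$ as the conjugacy. You correctly identified the one delicate point (smoothness of the gluing at the boundary tori, handled by the flatness condition), so no gap.
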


\begin{proof}
Pick a small $\eps>0$ and pick an increasing diffeomorphism $\psi\colon[-\eps,\eps +\eta]\to[-\eps,\eps]$ such that $\psi'\le 1$ and $\psi-x$ is $C^\infty$ flat at the endpoints. Let $T_\eps=\bigcup_{|t|\leq \eps} X_t(T)\subset M$. In a similar way, let $\hat T^\eta_\eps=\bigcup_{-\eps \leq t \leq \eta+\eps} \hat X_t (T_0)\subset \hat M_\eta$. Clearly these are smooth embeddings of $[-\eps,\eps] \times T$ and $ [-\eps,\eta+\eps] \times T$ in $M$ and $\hat M_\eta$, respectively, so that orbits of the flows correspond to horizontal lines. Now, choose a diffeomorphism from $\hat M_\eta$ to $M$ which is the identity outside $\hat T^\eta_\eps$ and identifies $\hat T^\eta_\eps$ with $T_\eps$ via $\psi\times id$. This diffeomorphism yields the conjugacy and the posited time change $\rho_\eta  X$. 
\end{proof}

Recall that the inserted flow box $\cU_\eta=[0,\eta]\times T\subset \hat M_\eta$ can be viewed a sweep-out of $T$ up to time $\eta$:

$$\cU_\eta = \bigcup_{0 \leq t \leq \eta}  \hat X_t(\hat T)$$

\noindent Consider the ``straightening diffeomorphism" $H_\eta: \cU_\eta \to [0,1] \times \TT^2$ given by

\begin{equation}\label{eq.HN}
 H_\eta(\hat X_t(p)) = \left(\frac{t}{\eta}, \theta_T(p)\right), p\in\hat T
\end{equation}

For fixed $\eta$ we will denote by $\hat W^{\sigma}_\eta$ and $\hat E^{\sigma}_\eta$ the corresponding foliations and invariant bundles for   $\hat X$ ($\sigma=cs,cu,ss,uu$). 

We also denote by $\cF^{uu}$ and $\cF^{ss}$  the one-dimensional foliations of $[0,1] \times \TT^2$ which in $\{t\} \times \TT^2$ coincide with the foliations $\{t\} \times F^u$ and $\{t\} \times F^s$, respectively.

\begin{lemm}\label{l.limit} The diffeomorphism $H_\eta$ has the following properties
\begin{itemize}
\item  $H_\eta(\hat W^{cs}_{\eta} \cap \cU_\eta) = [0,1] \times F^s\stackrel{\mathrm{def}}{=}
\cF^{cs}$

\item $H_\eta(\hat W^{cu}_{\eta} \cap \cU_\eta) = [0,1] \times F^u \stackrel{\mathrm{def}}{=}
\cF^{cu}$

\item $DH_\eta(\hat E^{ss}_{\eta})$ converges to the tangent
bundle of the foliation $\cF^{ss}$ as $\eta \to
\infty$.

\item $DH_\eta(\hat E^{uu}_{\eta})$ converges to the  tangent
bundle of the foliation $\cF^{uu}$ as $\eta \to
\infty$.

\end{itemize}
\end{lemm}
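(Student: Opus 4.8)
The first two items are essentially bookkeeping: inside the flow box $\cU_\eta$ the orbits of $\hat X$ are exactly the horizontal segments $t\mapsto \hat X_t(p)$, and the center-stable (resp.\ center-unstable) foliation of an Anosov flow is the saturation of the weak stable (resp.\ unstable) foliation under the flow. Since $\hat W^{cs}_\eta\cap\cU_\eta$ is swept out from $\hat W^{ss}_\eta\cap\hat T$, which by definition of the induced foliation is $F^s_X$ on $\hat T\cong T$, the straightening $H_\eta$ — which by its very definition \eqref{eq.HN} sends $\hat X_t(p)$ to $(t/\eta,\theta_T(p))$ and hence sends orbit segments to horizontal segments and $\hat T$ to $\{0\}\times\TT^2$ with $F^s_X$ going to $F^s$ — carries $\hat W^{cs}_\eta\cap\cU_\eta$ onto $[0,1]\times F^s=\cF^{cs}$. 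The center-unstable case is identical.

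For the last two items I would argue as follows. First, rescale: conjugating $\hat X^\eta$ through $H_\eta$ and rescaling the first coordinate by $\eta$, the time-one flow of $\hat X^\eta$ in the box becomes, up to a bounded smooth change of metric, the time-one map in $[0,\eta]\times\TT^2$ of the constant horizontal vector field $\partial_s$; equivalently, pushing forward by $H_\eta$ the flow becomes the linear flow $(s,z)\mapsto(s+t/\eta,z)$ on $[0,1]\times\TT^2$. The strong unstable direction $\hat E^{uu}_\eta$ is the unique line field inside the center-unstable plane field (which is $T\cF^{cu}=\mathrm{span}(\partial_s)\oplus TF^u$ in the straightened picture, by the second item) that is exponentially expanded in forward time and contains no flow direction in its forward images. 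Under the straightened (nearly linear) flow, a vector $a\,\partial_s + w$ with $w$ tangent to $F^u$ at a point $z$ evolves after the time needed to cross the box — which is $\eta$ units of the true flow, i.e.\ the $\partial_s$-component traverses length $1$ — so that the $\partial_s$-component stays comparable while the $TF^u$-component is multiplied by a factor growing like $e^{c\eta}$. Hence the ratio of the $\partial_s$-component to the $TF^u$-component of $DH_\eta(\hat E^{uu}_\eta)$ is $O(e^{-c\eta})$, which forces $DH_\eta(\hat E^{uu}_\eta)\to TF^u = T\cF^{uu}$ uniformly as $\eta\to\infty$; the strong stable case follows by the time-reversal symmetry (applying the unstable argument to $-\hat X^\eta$, which exchanges $ss\leftrightarrow uu$ and $F^s\leftrightarrow F^u$).

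To make the convergence quantitative and uniform I would invoke the graph-transform / cone-field description of $\hat E^{uu}_\eta$ inside the invariant plane field $\hat E^{cu}_\eta$: write $\hat E^{uu}_\eta$ as a graph over $TF^u$ with a ``slope'' valued in the $\partial_s$-direction, show this slope satisfies a contraction under the (straightened) flow with contraction rate $\to 0$ as $\eta\to\infty$, and conclude by the uniqueness of the fixed section. The one point that needs genuine care — and which I expect to be the main obstacle — is controlling the transition region near the two ends $\{0\}\times\TT^2$ and $\{1\}\times\TT^2$ of the box, where $\hat X^\eta$ is glued to the original Anosov field $X$ on $M'$: one must check that the contribution to $\hat E^{uu}_\eta$ coming from outside $\cU_\eta$ (i.e.\ the ``boundary condition'' imposed on $\hat E^{uu}_\eta$ at $\{1\}\times\TT^2$ by the dynamics on $M'$) is also exponentially suppressed after flowing backward across the long box, so that it does not affect the limit. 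This is where the fact that the box has length $\eta\to\infty$ (so the expansion/contraction across it is arbitrarily strong) does the work, and the estimate should be uniform because the gluing region and the Anosov data on $M'$ are fixed, independent of $\eta$.
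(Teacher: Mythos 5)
Your treatment of the first two items matches the paper's: both follow directly from the definition \eqref{eq.HN} of $H_\eta$ and the product structure of the flow box, so there is nothing to add there.

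For the last two items, however, the mechanism you invoke is wrong, and the error is visible inside your own setup. You correctly observe that, pushed forward by $H_\eta$, the flow in the box is $(s,z)\mapsto(s+t/\eta,z)$; in particular its derivative is the \emph{identity} on the $\TT^2$-factor, because $\hat X^\eta$ on $\cU_\eta=[0,\eta]\times T$ is the unit horizontal field for the product metric and hence acts there by isometric translation. Consequently the $TF^u$-component of a vector is \emph{not} ``multiplied by a factor growing like $e^{c\eta}$'' while the orbit crosses the box: no expansion whatsoever occurs during the crossing (all the hyperbolicity of $\hat X^\eta$ is accumulated in $M'$, which is exactly why the paper remarks that the Anosov constants worsen as $\eta$ grows). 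The same misconception undermines your closing paragraph: flowing backward ``across the long box'' suppresses nothing, for the same reason. So the exponential estimate $O(e^{-c\eta})$ you want for the slope of $DH_\eta(\hat E^{uu}_\eta)$ has no source, and the graph-transform contraction rate you hope tends to $0$ does not.

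The actual argument is not dynamical but metric, and it is the whole point of putting the factor $t/\eta$ into $H_\eta$. One first checks that the component of $\hat E^{uu}_\eta$ along $\hat X$ is bounded uniformly in $\eta$; for instance, a forward-invariant cone field around $E^{uu}_X$, uniformly transverse to $\RR X\oplus E^{ss}_X$ and extended over the box by translation-invariance, remains forward-invariant for $\hat X^\eta$ and therefore contains $\hat E^{uu}_\eta$. Since $DH_\eta$ multiplies the flow direction by $1/\eta$ while acting on the $T$-directions by the fixed diffeomorphism $D\theta_T$, the image line field has flow-component of order $1/\eta$ relative to its $TF^u$-component, hence converges to the tangent bundle of $\cF^{uu}$ as $\eta\to\infty$. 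The stable case is symmetric. This is the paper's (one-line) proof; your plan would need to be rewritten around this rescaling argument rather than around expansion inside the box.
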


It is important to remark that the above convergence is with respect to the standard metric in $[0,1] \times \TT^2$ and not with respect to the push forward metric from the manifold via $H_\eta$.

\begin{proof} Because the differential of $H_\eta$ maps the vector field $\hat X$ to the vector field $\frac{1}{\eta} \frac{\partial}{\partial t}$ the first two properties follow. Note that the component of $\hat E^{\sigma}_{\eta}$ along $\hat X$ is uniformly bounded
($\sigma=ss,uu$). Therefore, contraction by a factor $\frac{1}{\eta}$ implies the posited limit behavior in the latter properties.
\end{proof}

\subsection{A diffeomorphism in a flow box which preserves transversalities}\label{ss-path}

Assume that there exists a smooth path $\{\varphi_s\}_{s\in [0,1]}$ of diffeomorphisms
of $\TT^2$ such that

\begin{itemize}
\item $\varphi_s=\mathrm{id}$ for $s$ in neighborhoods of $0$ and $1$,
\item the closed path $s\mapsto \varphi_s$ is not homotopically
trivial in $\mathrm{Diff}(\TT^2)$, \item for every $s \in [0,1]$:
$$\varphi_s(F^u)\pitchfork F^s.$$
\end{itemize}

We use the coordinate chart $H_\eta\colon \cU_\eta\to[0,1]\times\TT^2$ to define
 diffeomorphism $\cG_\eta : \cU_\eta \to \cU_\eta$ by
$$
(s,x,y) \mapsto (s, \varphi_s(x,y)).
$$

The following lemma is immediate from our choice of $\{\varphi_s\}_{s\in [0,1]}$.
\begin{lemm}\label{l.perturbationorbitspace}
The diffeomorphism $\cG_\eta$ has the following properties:
\begin{itemize}
\item $\cG_\eta(H_\eta^{-1}(\cF^{uu}))$ is
transverse to $\hat W^{cs}_{\hat X} = H_\eta^{-1}(\cF^{cs})$,
\item $\cG_\eta(\hat W^{cu}_{\hat X})=\cG_\eta(H_\eta^{-1}(\cF^{cu}))$ is transverse
to $H^{-1}_\eta(\cF^{ss})$.
\end{itemize}
\end{lemm}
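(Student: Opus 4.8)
The plan is to transport everything through the chart $H_\eta$ and to exploit the fact that, in the model $[0,1]\times\TT^2$, the map $\cG_\eta$ acts \emph{fiberwise over the $s$-coordinate}; this makes the two asserted three-dimensional transversalities collapse to the two-dimensional transversality $\varphi_s(F^u)\pitchfork F^s$ assumed of the path $\{\varphi_s\}$. First I would use that transversality of foliations (equivalently, of their tangent distributions) is a diffeomorphism-invariant notion. Pushing forward by $H_\eta\colon\cU_\eta\to[0,1]\times\TT^2$ and recalling from Lemma~\ref{l.limit} that $H_\eta$ carries $\hat W^{cs}_{\hat X}\cap\cU_\eta$ to $\cF^{cs}$ and $\hat W^{cu}_{\hat X}\cap\cU_\eta$ to $\cF^{cu}$, the two claims become (i)~$\tilde\cG(\cF^{uu})\pitchfork\cF^{cs}$ and (ii)~$\tilde\cG(\cF^{cu})\pitchfork\cF^{ss}$ in $[0,1]\times\TT^2$, where $\tilde\cG := H_\eta\circ\cG_\eta\circ H_\eta^{-1}$ is the explicit diffeomorphism $(s,x,y)\mapsto(s,\varphi_s(x,y))$. (The condition $\varphi_s=\mathrm{id}$ for $s$ near $0$ and $1$ is what allows $\cG_\eta$ to be glued by the identity on $\hat M_\eta\setminus\cU_\eta$, but it plays no role in the transversality computation itself.)

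The structural observation is that $\tilde\cG$ preserves each slice $\TT^2_s=\{s\}\times\TT^2$, acting there by $\varphi_s$. Hence $\cF^{uu}$ and $\cF^{ss}$, whose leaves lie in the slices (namely $\{s\}$ times a leaf of $F^u$, resp. of $F^s$), have $\tilde\cG$-images again tangent to the slices, coinciding on $\TT^2_s$ with $\{s\}\times\varphi_s(F^u)$ and $\{s\}\times\varphi_s(F^s)$ respectively; whereas $\cF^{cs}=[0,1]\times F^s$ and $\cF^{cu}=[0,1]\times F^u$ are vertical, in the sense that every leaf contains the $\partial_s$-direction. Recall that in a $3$-manifold a line and a plane are transverse exactly when the line is not contained in the plane. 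For (i): at a point $(s,\varphi_s(x,y))$ the line $T\tilde\cG(\cF^{uu})$ is the tangent line of $\varphi_s(F^u)$ inside $\TT^2_s$, so it has no component along $\partial_s$, while $T\cF^{cs}$ is spanned by $\partial_s$ and the tangent line of $F^s$; such a horizontal line lies in this plane only if it coincides with the tangent line of $F^s$ at that point, which is excluded precisely by $\varphi_s(F^u)\pitchfork F^s$. For (ii): $T\tilde\cG(\cF^{cu})$ is spanned by a vector of the form $\partial_s+v_s$ with $v_s$ horizontal, together with the horizontal tangent line of $\varphi_s(F^u)$, while $T\cF^{ss}$ is the horizontal tangent line of $F^s$; solving the linear relation forces the latter line to be parallel to the tangent line of $\varphi_s(F^u)$, again contradicting $\varphi_s(F^u)\pitchfork F^s$. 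This establishes (i) and (ii), and hence the lemma.

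I do not expect a genuine obstacle here: the entire content is in choosing the model $[0,1]\times\TT^2$ for the inserted flow box and observing that $\cG_\eta$ is fiberwise over $s$, after which the hypothesis on $\{\varphi_s\}$ does all the work. The only point requiring care is the dimension bookkeeping --- in (i) the image foliation $\tilde\cG(\cF^{uu})$ is one-dimensional and $\cF^{cs}$ two-dimensional, whereas in (ii) these dimensions are reversed --- but in both situations transversality amounts to the same statement, namely that the line is not contained in the plane, which the slicewise transversality settles.
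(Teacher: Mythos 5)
Your argument is correct and is exactly the computation the paper has in mind: the paper states the lemma as ``immediate from our choice of $\{\varphi_s\}$'' and gives no further proof, and your write-up simply makes explicit why it is immediate, namely that in the chart $H_\eta$ the map $\cG_\eta$ acts slicewise by $\varphi_s$, so both three-dimensional transversality claims reduce to the assumed two-dimensional transversality $\varphi_s(F^u)\pitchfork F^s$. The dimension bookkeeping in your cases (i) and (ii) is handled correctly, so there is nothing to add.
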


Let $h_\eta \colon \hat M_{\eta} \to \hat M_{\eta}$ be the diffeomorphism defined as 

\begin{equation}\label{eq:dehnhN}
h_\eta(x)= \left\{ \begin{array}{cc}
  x & \text{if } \ x \notin \cU_\eta \\
  \cG_\eta(x) & \text{if } \ x \in \cU_\eta \\
\end{array}\right.
\end{equation}

\noindent If $\gamma$ denotes the element of $\pi_1(T)$ associated to the loop $\{\theta_T^{-1}\circ \varphi_t \circ \theta_T(x)\}_t$, then, by construction, $h_\eta$ is a diffeomorphism that belongs to $\tau(T,\gamma)$ \ie a Dehn twist along $\gamma$ (cf. \S \ref{ss-dehn}). 

\begin{prop}\label{c.transversalityBL} There exists $\eta_0>0$ such that for all $\eta \geq \eta_0$:
\begin{itemize}
\item $Dh_\eta(E^{uu}_{\hat X})$ is transverse to $E^{cs}_{\hat X}$,
\item $Dh_\eta(E^{cu}_{\hat X})$ is transverse to $E^{ss}_{\hat X}$.
\end{itemize}
\end{prop}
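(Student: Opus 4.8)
The plan is to transfer the transversality statements from the model space $[0,1]\times\TT^2$, where Lemma~\ref{l.perturbationorbitspace} gives them cleanly, back to $\hat M_\eta$ via the straightening diffeomorphism $H_\eta$, using the convergence estimates of Lemma~\ref{l.limit} to absorb the distortion for large $\eta$. The key observation is that $h_\eta$ acts as the identity outside $\cU_\eta$, so the two conditions are automatic there; all the work is inside the flow box $\cU_\eta$, where $h_\eta=\cG_\eta$ and $H_\eta$ conjugates $\cG_\eta$ to the ``product'' map $(s,x,y)\mapsto(s,\varphi_s(x,y))$.

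First I would fix, inside $\cU_\eta$, the four objects pushed forward by $H_\eta$: the foliations $\cF^{cs}=[0,1]\times F^s$ and $\cF^{cu}=[0,1]\times F^u$ are \emph{exactly} the images of $\hat W^{cs}_\eta$ and $\hat W^{cu}_\eta$ (first two bullets of Lemma~\ref{l.limit}), independently of $\eta$, while $DH_\eta(\hat E^{ss}_\eta)$ and $DH_\eta(\hat E^{uu}_\eta)$ converge as $\eta\to\infty$ to the tangent bundles of $\cF^{ss}$ and $\cF^{uu}$. Next, since $\{\varphi_s\}$ is $C^1$-continuous on the compact interval $[0,1]$ and each $\varphi_s(F^u)\pitchfork F^s$, there is a uniform lower bound $\delta>0$ for the angle between $D\cG_\eta$ applied to the tangent bundle of $\cF^{uu}$ and the tangent bundle of $\cF^{cs}$ (both computed in the standard metric of $[0,1]\times\TT^2$), and symmetrically a uniform angle bound between $D\cG_\eta$ applied to $T\cF^{cu}$ and $T\cF^{ss}$. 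Now I would write
\[
Dh_\eta(\hat E^{uu}_\eta)=DH_\eta^{-1}\circ D\cG_\eta\circ DH_\eta(\hat E^{uu}_\eta),
\]
and compare $DH_\eta(\hat E^{uu}_\eta)$ with $T\cF^{uu}$: they are within $o(1)$ of each other as $\eta\to\infty$, so $D\cG_\eta\bigl(DH_\eta(\hat E^{uu}_\eta)\bigr)$ is within $o(1)$ of $D\cG_\eta(T\cF^{uu})$, which makes an angle $\geq\delta$ with $T\cF^{cs}=DH_\eta(\hat E^{cs}_\eta)$; choosing $\eta_0$ so that the error is below $\delta/2$ for $\eta\geq\eta_0$ gives transversality of $D\cG_\eta(DH_\eta(\hat E^{uu}_\eta))$ with $DH_\eta(\hat E^{cs}_\eta)$ in the standard metric. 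Transversality is a metric-independent, open condition on pairs of subspaces, so applying $DH_\eta^{-1}$ preserves it, yielding $Dh_\eta(\hat E^{uu}_\eta)\pitchfork\hat E^{cs}_\eta$ on $\cU_\eta$. The symmetric argument with $\cG_\eta^{-1}$, $\cF^{cu}$ and $\cF^{ss}$ handles the second bullet. (Here I am writing $E^\sigma_{\hat X}$ and $\hat E^\sigma_\eta$ interchangeably, as in the statement.)

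The main obstacle — and the reason Lemma~\ref{l.limit} is phrased with the warning about metrics — is precisely the bookkeeping with the two metrics: $H_\eta$ is a wild diffeomorphism as $\eta\to\infty$ (it contracts the flow direction by $1/\eta$), so one cannot afford to measure angles in the pulled-back metric, and one must be careful that the convergence $DH_\eta(\hat E^{ss}_\eta)\to T\cF^{ss}$ and the uniform transversality coming from $\{\varphi_s\}$ are both stated in the \emph{same} fixed standard metric on $[0,1]\times\TT^2$, so that the triangle-inequality-type estimate on angles is legitimate. Once that is set up the argument is a soft continuity-and-compactness argument, and the uniformity of $\eta_0$ over $\cU_\eta$ (hence over $\hat M_\eta$, the complement being trivial) follows from compactness of $T$ and of the parameter interval $[0,1]$.
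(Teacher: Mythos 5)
Your proposal is correct and follows the same route as the paper, whose proof is exactly the combination of Lemma~\ref{l.perturbationorbitspace}, Lemma~\ref{l.limit}, and the fact that $h_\eta$ is the identity off $\cU_\eta$; you have simply filled in the details (uniform angle bound from compactness of $[0,1]\times\TT^2$, convergence of $DH_\eta(\hat E^{uu}_\eta)$ and $DH_\eta(\hat E^{ss}_\eta)$ in the fixed standard metric, and metric-independence of transversality under $DH_\eta^{-1}$). The bookkeeping of the two metrics is handled correctly, which is indeed the only delicate point.
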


\begin{proof} This follows by combining Lemma \ref{l.perturbationorbitspace}, Lemma~\ref{l.limit} and the fact that outside $\cU_\eta$ the diffeomorphism $h_\eta$ is the identity.
\end{proof}

As a consequence, using Proposition \ref{prop.function}, we have the following.

\begin{coro}\label{c.flowhtransverse}
Given $X$ an Anosov flow on $M$ transverse to an embedded torus $T$ and $\gamma \in G(T,X)$, there exists a smooth function $\rho_T: M \to (0,1]$  and a diffeomorphism $h_\gamma:M \to M$ in the mapping class $\tau(T,\gamma)$ such that if $f_T$ is the time-one map of the flow generated by $\rho_T X$, then $f_T$ is $h_\gamma$-transverse to itself. 
\end{coro}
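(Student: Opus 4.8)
The plan is to combine the pieces just established into a single application of the $h$-transversality criterion, working in the model manifold $\hat M_\eta$ and then transporting everything back to $M$ via the time change. First I would fix $\eta \ge \eta_0$ as produced by Proposition~\ref{c.transversalityBL}, so that on $\hat M_\eta$ the Dehn-twist diffeomorphism $h_\eta$ satisfies $Dh_\eta(E^{uu}_{\hat X}) \pitchfork E^{cs}_{\hat X}$ and $Dh_\eta(E^{cu}_{\hat X}) \pitchfork E^{ss}_{\hat X}$. Noting that $E^{ss}_{\hat X}$ is $h_\eta$-invariant outside $\cU_\eta$ (where $h_\eta=\mathrm{id}$) and that the second transversality is equivalent to $Dh_\eta^{-1}(E^{ss}_{\hat X}) \pitchfork E^{cu}_{\hat X}$, this says precisely, in the language of Definition~\ref{defi-htran}, that the time-one map $\hat X_1$ of the flow of $\hat X^\eta$ is $h_\eta$-transverse to itself: $\hat X_1 \ \xrightarrow[h_\eta]{} \ \hat X_1$.

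Next I would use Proposition~\ref{prop.function} to obtain a smooth function $\rho_\eta \colon M \to (0,1]$ and a smooth conjugacy $\Psi \colon M \to \hat M_\eta$ with $\Psi_* (\rho_\eta X) = \hat X^\eta$; in particular $\Psi$ conjugates the time-one map $f_0$ of the flow of $\rho_\eta X$ to $\hat X_1$. Conjugacy by a diffeomorphism clearly preserves $h$-transversality: if $\hat X_1 \ \xrightarrow[h_\eta]{} \ \hat X_1$, then setting $h_\gamma := \Psi^{-1} \circ h_\eta \circ \Psi$ one has $f_0 \ \xrightarrow[h_\gamma]{} \ f_0$, because $Dh_\gamma$ transports the bundles $E^{uu}_{f_0} = D\Psi^{-1}(E^{uu}_{\hat X})$, $E^{cs}_{f_0} = D\Psi^{-1}(E^{cs}_{\hat X})$, etc., exactly along the transversalities of Proposition~\ref{c.transversalityBL}. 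Finally, since $h_\eta$ lies in the mapping class $\tau(T,\gamma)$ on $\hat M_\eta$ and $\Psi$ is a diffeomorphism identifying $\hat M_\eta$ with $M$ carrying the torus $T_0$ to $T$ (compatibly with the $\theta_T$-coordinates used to define $\gamma$), the conjugate $h_\gamma$ lies in the mapping class $\tau(T,\gamma)$ on $M$, as required. Setting $\rho_T := \rho_\eta$ and $f_T := f_0$ completes the argument.

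The main point to be careful about — rather than a genuine obstacle — is the bookkeeping of which manifold each object lives on and the fact that $h$-transversality is a purely pointwise (conjugacy-invariant) condition on the invariant bundles, so that the metric distortion introduced by $\Psi$ is irrelevant; this is the same phenomenon flagged after Lemma~\ref{l.limit}, where the convergence is measured in the standard metric on $[0,1]\times\TT^2$ and not in the pulled-back metric. Everything else is a direct concatenation of Proposition~\ref{prop.function} and Proposition~\ref{c.transversalityBL}, exactly as the paragraph preceding the statement of Corollary~\ref{c.flowhtransverse} indicates. I would also remark that the hypothesis $\gamma \in G(T,X)$ is exactly what guarantees the existence of the isotopy loop $\{\varphi_s\}$ with $\varphi_s(F^u)\pitchfork F^s$ required in Subsection~\ref{ss-path}, by the result of \cite{BZ} quoted in the introduction, so no extra input is needed there.
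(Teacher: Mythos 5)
Your proposal is correct and follows exactly the route the paper intends: the paper's entire proof is the single remark that the corollary follows from Proposition~\ref{c.transversalityBL} together with Proposition~\ref{prop.function}, and you have simply filled in the (correct) details — the equivalence of the two phrasings of the second transversality condition, the invariance of $h$-transversality under smooth conjugacy, and the transport of the mapping class $\tau(T,\gamma)$ along the conjugacy. Nothing to add.
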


\begin{rema}\label{r.volumepreserving}
Notice that if there is a volume form $\omega$ on $T$ such that $\varphi_s$ preserves the form $(\theta_T)_\ast (\omega)$ for every $s$, then $h_\eta$ preserves volume form $\omega \wedge dX$.
\end{rema}

\subsection{Proof of Theorem \ref{t.transverse}.}

Let $X$ be an Anosov flow on $M$ and let $T_1, \ldots, T_\ell$ be tori (not necessarily disjoint or distinct) transverse to $X$. Choose $\gamma_i \in G(T_i,X)$. 

Using Corollary \ref{coro-htransv} and Corollary \ref{c.flowhtransverse} one obtains diffeomorphisms $h_i$ and $g_i$ of $M$ such that if $f_i$ denotes the time one map of the flow generated by $\rho_{T_i}X$, then we have:

\[ \noindent f_1 \ \xrightarrow[h_1]{} \ f_1 \ \xrightarrow[g_1]{} \ f_2 \ \xrightarrow[h_2]{} \ f_2 \ \xrightarrow[g_2]{}\cdots  \xrightarrow[g_{\ell-1}]{} \ f_\ell \ \xrightarrow[h_\ell]{} \ f_\ell \ \xrightarrow[g_\ell]{} \ f_1  \]

\noindent with $g_i$ isotopic to identity and $h_i \in \tau(T_i,\gamma_i)$. 

Now, according to  Proposition \ref{prop-criterion} (and Lemma \ref{l.h-transverse}), for every large $m$ the diffeomorphism
$$F=f_1^m\circ g_\ell\circ f_\ell^m\circ h_\ell \circ \cdots\circ h_2\circ f_2^m\circ g_1\circ f_1^m\circ h_1$$
is absolutely partially hyperbolic. Clearly, the homotopy class of $F$ is the composition of $\tau(T_\ell,\gamma_\ell) \circ \ldots \circ \tau(T_1,\gamma_1)$. 
 $\hfill\square$
 
 \subsection{Mapping class groups of 3-manifolds} 
 In order to proceed with the proof of Corollary~\ref{corollary}, we need to briefly recall the description of the mapping class group $\cM(M)$ of a Haken manifold $M$. This description relies on Johannson's characteristic submanifold theory which we briefly summarize. For full generality and full details we refer to~\cite[Chapters VI and IX]{Johannson}; see also~\cite[\S 4]{McC} and~\cite[Chapter 4]{CM}.
 
 Let $M$ be a closed ($\partial M=\varnothing$) Haken 3-manifold. Then the {\it characteristic submanifold} $\Sigma\subset M$ is a codimension 0 submanifold of $M$ which consists of all Seifert-fibered pieces of the JSJ decomposition of $M$. A torus which separates two Seifert-fibered pieces of the JSJ decomposition yields two torus boundary components of $\Sigma$ so that the number of connected components of $\Sigma$ is precisely the number of Seifert-fibered pieces in the JSJ decomposition. 
 Johannson's Classification Theorem asserts that every diffeomorphism  of $M$ is homotopic to a homeomorphism which preserves the characteristic submanifold $\Sigma$. Consider the subgroup $\cM(M,\Sigma)$ of $\cM(M)$ given by diffeomorphisms whose restriction to the closure of the complement of $\Sigma$ is isotopic to identity. By work of Johannson,  $\cM(M,\Sigma)$ has finite index in $\cM(M)$.
(In the proof of Corollary~\ref{corollary} below, the fact that $\cM(M,\Sigma)$ has finite index will be immediate from Johannson's Classification Theorem.)
 
 Restricting to connected components $\Sigma_i$, $i=1,\ldots n$, of $\Sigma$ yields an exact sequence (see~\cite[\S 4]{McC})
 \begin{equation}
 \label{eq_K}
 1\to K\to \cM(M,\Sigma)\to \prod_{i=1}^n\bar\cM(\Sigma_i,\partial \Sigma_i)\to 1
 \end{equation}
 where $K$ consists of mapping classes which are represented by homeomorphisms $h\colon M\to M$ which are identity outside a small product neighborhood of $\partial \Sigma$ and $\bar\cM(\Sigma_i,\partial \Sigma_i)$ is the subgroup of the full mapping class group $\cM(\Sigma_i,\partial \Sigma_i)$ which given by homeomorphisms $h\colon \Sigma_i\to\Sigma_i$ whose restriction to $\partial\Sigma_i$ is isotopic to identity.
 
 \begin{prop}
 \label{prop_K}
 Let $M$ be a Haken manifold admitting an Anosov flow $X$ which is transverse to the boundary $\partial\Sigma$ of the characteristic submanifold $\Sigma=\cup_{i=1}^n\Sigma_i$. Assume that for each torus boundary component $T$ of $\Sigma$ the group of permitted Dehn twists $G(T,X)$ is maximal, \ie isomorphic to $\ZZ^2$. Also assume that the mapping class groups of the Seifert fibered pieces $\bar\cM(\Sigma_i,\partial \Sigma_i)$, $i=1,\ldots n$, are finite. Then the mapping class group $\cM(M)$ has a finite index subgroup $K$ such that each mapping class in $K$ can be represented by an absolutely partially hyperbolic diffeomorphism.
 \end{prop}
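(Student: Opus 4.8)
The plan is to identify $K$ in the exact sequence~\eqref{eq_K} as the subgroup generated by Dehn twists along the torus boundary components of $\Sigma$, and then to realize each such generator by an absolutely partially hyperbolic diffeomorphism using the machinery of Section~\ref{s.transverse}. First I would recall that $K$ is, by its very description, generated by homeomorphisms supported in a product neighborhood of $\partial\Sigma$; such a homeomorphism decomposes (up to isotopy) as a composition of Dehn twists $\tau(T,\gamma)$ where $T$ ranges over the torus components of $\partial\Sigma$ and $\gamma\in\pi_1(T)$. Since by hypothesis each $T$ is transverse to the Anosov flow $X$ and $G(T,X)\simeq\ZZ^2=\pi_1(T)$, every such $\tau(T,\gamma)$ has $\gamma\in G(T,X)$, and therefore every element of $K$ lies in the subgroup $\cT(X)\subset\cM(M)$ generated by the admissible Dehn twists. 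Hence $K\subset\cT(X)$.

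Next I would invoke Theorem~\ref{t.transverse}: since $K\subseteq\cT(X)$, every element of $K$ contains an absolutely partially hyperbolic representative. For the finite-index claim, I would argue as follows. Johannson's Classification Theorem gives that $\cM(M,\Sigma)$ has finite index in $\cM(M)$ (this is the part the excerpt says is immediate), so it suffices to see that $K$ has finite index in $\cM(M,\Sigma)$. But the exact sequence~\eqref{eq_K} exhibits $\cM(M,\Sigma)/K$ as a subgroup of $\prod_{i=1}^n\bar\cM(\Sigma_i,\partial\Sigma_i)$, and by hypothesis each factor $\bar\cM(\Sigma_i,\partial\Sigma_i)$ is finite; hence $\cM(M,\Sigma)/K$ is finite, so $K$ has finite index in $\cM(M,\Sigma)$ and therefore in $\cM(M)$. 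Combining, $K$ is a finite-index subgroup of $\cM(M)$ all of whose elements admit absolutely partially hyperbolic representatives.

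The one point that needs a little care — and which I expect to be the main obstacle — is verifying that every element of $K$ is genuinely a product of Dehn twists $\tau(T,\gamma)$ along the boundary tori of $\Sigma$ (rather than some more exotic homeomorphism supported near $\partial\Sigma$), i.e.\ that the subgroup of $\cM(M)$ generated by homeomorphisms supported in a product neighborhood $\partial\Sigma\times[-1,1]$ coincides with the group generated by Dehn twists along the slices $\partial\Sigma\times\{0\}$. This is a standard fact about mapping classes of $T^2\times[-1,1]$ rel boundary — the mapping class group of a product of a torus with an interval, fixing the boundary, is generated by the Dehn twist along a parallel torus, and more precisely is $\ZZ^2$ acting by $(\gamma)$-twists — so it reduces to citing the structure of $\cM(T^2\times I,\partial)$ from~\cite{Johannson} or~\cite{McC}. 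Once that identification is in place, the rest is a direct assembly of Theorem~\ref{t.transverse} with the finiteness hypotheses, with no further analytic input required.
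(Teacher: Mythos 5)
Your proposal is correct and follows essentially the same route as the paper: identify $K$ from the exact sequence as the group of Dehn twists along $\partial\Sigma$, observe $K\subset\cT(X)$ since every $G(T,X)$ is all of $\pi_1(T)$, apply Theorem~\ref{t.transverse}, and deduce finite index from Johannson's theorem together with the finiteness of the groups $\bar\cM(\Sigma_i,\partial\Sigma_i)$. The point you flag for care (that mapping classes supported near $\partial\Sigma$ are exactly products of Dehn twists along the boundary tori) is precisely what the paper asserts without elaboration, and your suggested justification via the structure of $\cM(T^2\times I,\partial)$ is the right one.
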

 \begin{proof}
 By the above discussion and the assumption on $\bar\cM(\Sigma_i,\partial \Sigma_i)$, $i=1,\ldots n$, the subgroup $K$ from~(\ref{eq_K}) has finite index in $\cM(M)$. Each element of $K$ is represented by a Dehn twist along $\partial \Sigma$. Hence, by the assumption on $G(T,X)$, we have $K\subset \mathcal T(X)$ (recall that $\mathcal T(X)$ is the group generated by permitted Dehn twists). Hence each mapping class from $K$ is represented by a partially hyperbolic diffeomorphism according to Theorem~\ref{t.transverse}.
 \end{proof}
 
 In order to demonstrate examples to which Proposition~\ref{prop_K} applies we need to have Seifert-fibered manifolds $\Sigma_i$ with finite $\bar\cM(\Sigma_i,\partial \Sigma_i)$. Thus we need to recall Johannson's description of the mapping class group of Seifert-fibered manifolds with boundary~\cite[\S 25]{Johannson}.
 
 Let $\Sigma$ be a Seifert-fibered manifold with a non-empty boundary $\partial\Sigma$. Consider the mapping class group $\cM(\Sigma,\partial\Sigma)$ of self homeomorphisms $h\colon(\Sigma,\partial\Sigma)\to (\Sigma,\partial\Sigma)$ modulo boundary preserving isotopies ($h$ is allowed to permute boundary components). If one excludes six exceptional manifolds~\cite[5.1.1-5.1.6]{Johannson} (which we will not encounter here) then homeomorphism $h$ is isotopic to a fiber preserving homeomorphism~\cite[Corollary 5.9]{Johannson}. It follows~\cite[Proposition 25.3]{Johannson} that the mapping class group fits into a split short exact sequence
 \begin{equation}
 \label{eq_exact}
 1\to \cM^0(\Sigma,\partial\Sigma)\to \cM(\Sigma,\partial\Sigma)\to\cM(B,\partial B)\to 1,
\end{equation}
where $\cM^0(\Sigma,\partial\Sigma)$ is the subgroup of the mapping class group given by homeomorphisms which preserve each fiber (\ie homeomorphisms which fiber over identity) and $\cM(B,\partial B)$ is the mapping class group of the underlying orbifold $B$. (In the setup to which we are about to specify in order to obtain Corollary~\ref{corollary}, manifold $\Sigma$ will be circle-fibered and, hence, $\cM(B,\partial B)$ will simply be the mapping class group of a surface $B$.) Further, $\cM^0(\Sigma,\partial\Sigma)$ is isomorphic to $H_1(B,\partial B)$ (relative first homology) and is generated by certain vertical Dehn twists~\cite[Lemma 25.2]{Johannson}. 

Recall that Johannson's work elements of $\cM(\Sigma,\partial\Sigma)$ are represented by fiber-preserving homeomorphisms. Further if $h_1, h_2\colon \Sigma\to\Sigma$ are isotopic fiber preserving homeomorphisms then by work of Waldhausen~\cite{Waldhausen} (see discussion on p. 85) $h_1$ is isotopic to $h_2$ via a path of fiber preserving homeomorphisms. It follows that the
short exact sequence ~(\ref{eq_exact}) restricts to a short exact sequence
 \begin{equation}
 \label{eq_exact2}
 1\to\bar \cM^0(\Sigma,\partial\Sigma)\to\bar \cM(\Sigma,\partial\Sigma)\to\bar\cM(B,\partial B)\to 1,
\end{equation}
where $\,\,\,\bar{}\,\,\,$ indicates the subgroup (of corresponding mapping class group) induced by homeomorphisms whose restriction to the boundary is isotopic to identity.
 
 \subsection{Proof of Corollary~\ref{corollary}}\label{ss.everymapping} Denote by $\Sigma$ the total space of the (unique) non-oriented circle bundle $S^1\to \Sigma\stackrel{p}{\to} B$, where $B$ is the projective plane $\RR P^2$ with two disks removed (see Figure~\ref{fig:1}). Then manifold $\Sigma$ is orientable and has two torus boundary components. Bonatti and Langevin exhibited a self gluing of $\Sigma$ (via an orientation-reversing diffeomorphism of the 2-torus) which yields a closed manifold $M$ which supports an Anosov flow $X$ transverse to the gluing torus~\cite{BL}. The closed manifold $M$ is not Seifert-fibered, hence the characteristic submanifold of $M$ is the complement of a product neighborhood of the gluing torus and, therefore, can be identified with $\Sigma$. Note that for $M$ the subgroup $K$ appearing in~(\ref{eq_K}) is the subgroup of Dehn twists along the gluing torus and, hence, is isomorphic to $\ZZ^2$. 
 Further by combining the description of invariant foliations near the gluing torus provided by~\cite{BL} with the description of permitted Dehn twists~\cite{BZ} we have that all Dehn twists are permitted. Hence Proposition~\ref{prop_K} applies and, according to (\ref{eq_exact2}), in order to complete the proof of Corollary~\ref{corollary} in the case when $n=1$ it only remains to show that the groups $ \bar\cM^0(\Sigma,\partial\Sigma)$ and $\bar \cM(B,\partial B)$ are finite. In fact, the form of the intersection of the foliations with the tori allow to see that one can make the Dehn-twists in a volume preserving way (see \cite[Lemma 3.8]{BGP}). 
 
 \begin{figure}[ht]
\vspace{-0.5cm}
\begin{center}
\includegraphics[scale=0.6]{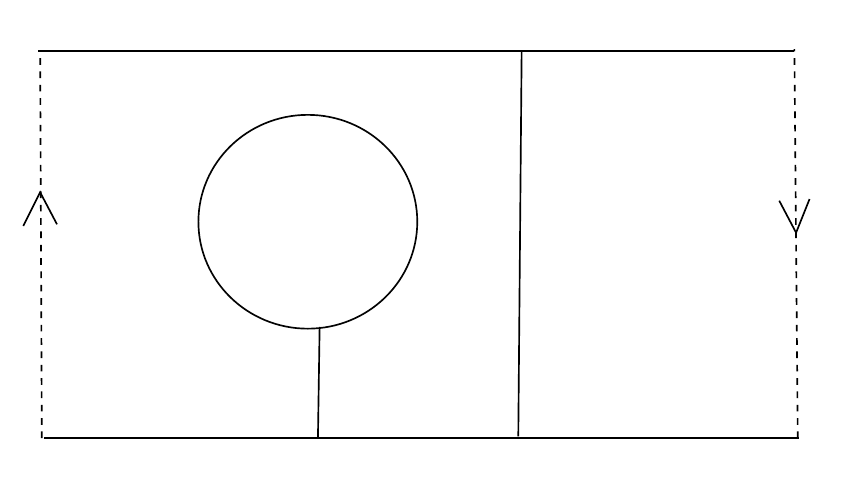}
\begin{picture}(0,0)
\put(-3,100){\small{$B$}}
\put(-95,70){\small{$h$}}
\put(-155,30){\small{$\alpha$}}
\end{picture}
\end{center}
\vspace{-0.5cm}
\caption{\small{The surface $B$.}\label{fig:1}}
\end{figure}

 The mapping class group  $\bar \cM(B,\partial B)$ can be easily seen to be $\ZZ/2\ZZ$.
 By~\cite[Lemma~25.2]{Johannson}, we have the isomorphism $\cM^0(\Sigma,\partial\Sigma)\simeq H_1(B,\partial B)$. The group $H_1(B,\partial B)$ is the first homology of $\RR P^2$ with two points identified. Hence $\cM^0(\Sigma,\partial\Sigma)\simeq H_1(B,\partial B)\simeq \ZZ\oplus\ZZ/2\ZZ$. Further,~\cite[ Proof of Case~2 of Lemma~25.2]{Johannson} gives an explicit description of generators of $\cM^0(\Sigma,\partial\Sigma)$ as follows. Let $\alpha$ be a simple curve which connects the boundary components of $B$ and let $h$ be a simple curve which connects a boundary component to itself as indicated in Figure \ref{fig:1}. Let $A_\alpha=p^{-1}\alpha$ and $A_h=p^{-1} h$ be corresponding annuli. Then, the Dehn twists along $A_\alpha$ and $A_h$ are the generators of $\cM^0(\Sigma,\partial\Sigma)$, the first one being the infinite order generator and the second one being the generator of order two. Clearly the Dehn twist along $A_\alpha$ restricts to a Dehn twist on each torus boundary component and, hence, is not isotopic to identity on the boundary. However, the restriction of Dehn twist along $A_h$ to the boundary $\partial \Sigma$ can be seen to be isotopic to identity (two Dehn twists on the boundary component given by $h$ are inverses of each other). We conclude that  $\bar\cM^0(\Sigma,\partial\Sigma)$ is $\ZZ/2\ZZ$ and that $K\simeq \ZZ^2$ indeed has a finite index in $\cM(M)$.
 
 To address the case $n>1$, consider finite cyclic covers $M_n\to M$ obtained by gluing $n$ copies of $\Sigma$ using the Bonatti-Langevin gluing. Then the lifted Anosov flow on $M_n$ is transverse to each of the gluing tori. The characteristic submanifold of $M_n$ is the disjoint union of $n$ copies of $\Sigma$. Then the same considerations as above apply to show that $K\simeq \ZZ^{2n}$ has finite index in $\cM(M_n)$. Further, because the group of Deck transformations $\ZZ/ n\ZZ$ of the covering $M_n\to M$ commutes with the Anosov  flow one can, in fact, realize a larger subgroup $K\oplus \ZZ/ n\ZZ$ by partially hyperbolic diffeomorphisms by following the arguments for the proof of Theorem~\ref{t.transverse}. More specifically, one has to utilize the fact that if $h$ denotes a Deck transformation of the covering $M_n\to M$ then the time one map of the Anosov flow is $h$-transverse to itself.
As this is
only needed to go from $K$ to $K\oplus \ZZ/ n\ZZ$,
we leave the details to the reader.

\section{Dynamical incoherence}\label{s.incoherence}
In this section, we consider the examples built in Theorem~\ref{t.geodesic} on the unit tangent bundle $T^1S$ of an oriented higher genus  surface $S$ which are 
isotopic to the projectivizations of pseudo-Anosov diffeomorphisms of $S$. 

Consider a hyperbolic surface $(S,\mu)$ and an element $\varphi$ of the mapping class group
$\cM(S)$. We denote by $\{\cG^t\}_{t\in\RR}$ the geodesic flow for the hyperbolic metric $\mu$ and by $Z$ the vector field which generates it.
Let $E^{ss}_Z\oplus \RR Z\oplus E^{uu}_Z$ be the hyperbolic splitting for the geodesic flow. 

According to Theorem~\ref{t.geodesic} and Remarks~\ref{r.bundles} and \ref{rema-metric}, there exists a diffeomorphism $f\colon T^1S\to T^1S$ such that 
\begin{itemize}
 \item $f$ belongs to the mapping class $\iota(\varphi)$;
 \item for any $t\geq 0$  the diffeomorphism $f_t= \cG^t\circ f\circ \cG^t$ is absolutely partially hyperbolic and volume preserving; 
 we denote by $E^{ss}_t\oplus E^c_t\oplus E^{uu}_t$ 
 the corresponding partially hyperbolic splitting.
 \item the subbundles $E^{ss}_t$, $E^c_t$ and $E^{uu}_t$ converge uniformly to $E^{ss}_Z$, $\RR Z$ and $E^{uu}_Z$, respectively, as $t\to +\infty$.
\end{itemize}

The goal of this section is to prove the following result (which is non-vacuous according  to the above discussion).
\begin{theo}\label{t.incoherent} Consider a hyperbolic surface $(S,\mu)$ and a pseudo-Anosov mapping class $\varphi\in\cM(S)$. Let $Z$ be the vector field which generates the geodesic flow on $T^1S$.

Then there exists $\eta>0$ such that, if $f\colon T^1S\to T^1S$ is a diffeomorphism satisfying the following conditions:
\begin{itemize}
\item$f$  is partially hyperbolic;
 \item $f$ belongs to the mapping class $\iota(\varphi)$;
 \item the subbundles of the partially hyperbolic splitting $E^{ss}\oplus E^c\oplus E^{uu}$ of $f$ are $\eta$-close to the 
 bundles of the hyperbolic splitting  $E^{ss}_Z\oplus \RR Z\oplus E^{uu}_Z$ of the geodesic flow;
\end{itemize}
 then $f$ is dynamically incoherent. 
 \end{theo}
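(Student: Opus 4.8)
\medskip
\noindent\textbf{Sketch of a proof.}
The plan is to argue by contradiction: suppose $f$ is dynamically coherent and --- as in the situation in which the statement is applied --- that $f$ lies in the mapping class $\iota(\varphi)$ with $\varphi$ pseudo-Anosov; one reaches a contradiction once $\eta$ is small.

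\emph{Step 1: reduction to genuine foliations and their global structure.} By Burago--Ivanov \cite{BI}, $f$ always carries branching foliations $\cF^{cs}_{bran}$, $\cF^{cu}_{bran}$ tangent to $E^{cs}=E^{ss}\oplus E^c$ and $E^{cu}=E^c\oplus E^{uu}$; dynamical coherence would upgrade these to genuine $f$-invariant foliations $\cF^{cs}$, $\cF^{cu}$, with $\cF^c:=\cF^{cs}\cap\cF^{cu}$ an $f$-invariant one-dimensional foliation tangent to $E^c$. Since $\mu$ has constant curvature, the weak foliations $W^{ws}_Z$, $W^{wu}_Z$ of the geodesic flow $\cG^t$ are smooth, taut, Reebless and $\RR$-covered: their lifts to $\widetilde{T^1S}$ have leaf space $\RR$, on which the fiber class of $\pi_1(T^1S)$ acts by a non-trivial translation with quotient circle $\partial\HH^2$ carrying the Fuchsian $\pi_1(S)$-action. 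The first task is to show that, for $\eta$ small, any foliation tangent to a distribution $\eta$-$C^0$-close to $TW^{ws}_Z$ (resp. $TW^{wu}_Z$) inherits these features --- Reebless, $\RR$-covered, with the same leaf-space dynamics --- with a leaf-conjugacy to $W^{ws}_Z$ (resp. $W^{wu}_Z$) isotopic to the identity, and that one can arrange this simultaneously on the stable and unstable sides.

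\emph{Step 2: the pseudo-Anosov obstruction, and where the difficulty lies.} The outcome of Step~1 is that, up to conjugation by a homeomorphism isotopic to the identity, $f$ would be a self orbit equivalence of $\cG^t$ respecting both (unoriented) weak foliations, so that its action on the circle at infinity $\partial\HH^2\cong\partial\pi_1(S)$ realizes $\varphi_*\in\mathrm{Out}(\pi_1 S)$ compatibly with the weak foliations and with the $\RR$-covered center structure. The contradiction must then come from the pseudo-Anosov structure of $\varphi$, and this is the heart of the argument. Passing to a suitable power of $f$ (harmless for incoherence), one may assume $\varphi$ fixes a $p$-pronged singularity of its invariant singular measured foliations together with each of its prongs; near the corresponding periodic set, the $f$-invariant local pieces of $\cF^{cs}$ and $\cF^{cu}$ dictated by the prong directions would have to assemble into a $2p$-pronged (``singular'') configuration, which is impossible for a genuine pair of foliations transverse along the one-dimensional foliation $\cF^c$ --- equivalently, $\cF^{cs}_{bran}$ is forced to branch genuinely there, so $f$ cannot be dynamically coherent. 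I expect this to be the main obstacle: making precise how the ``singular'' topology of the pseudo-Anosov invariant foliations obstructs the existence of non-branching center-stable and center-unstable foliations using only the $\eta$-closeness of the bundles. This is exactly where the hypothesis ``$\varphi$ pseudo-Anosov'' enters; for $\varphi$ isotopic to the identity the construction does yield dynamically coherent examples (e.g. the time-one map of $\cG^t$). Finally, once incoherence is known, the minimality of $W^{ss}_f$ and $W^{uu}_f$ (established elsewhere in the paper) shows the non-empty $f$-invariant branching set is dense; and since the argument depends only on the $\eta$-closeness of the bundles and on the mapping class --- both stable under small $C^1$-perturbations --- the incoherence is robust, as asserted in Theorem~\ref{t.incoh}.
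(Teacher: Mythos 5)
The overall shape of your argument --- argue by contradiction assuming coherence, transfer the problem to the circle at infinity $\partial\DD^2$, and let the pseudo-Anosov structure of $\varphi$ produce the obstruction --- is aligned with the paper. But the step where the contradiction is actually derived is missing, and the mechanism you propose for it is not the one that works. You suggest that a $p$-pronged singularity of the invariant measured foliations of $\varphi$ would force the $f$-invariant foliations $\cF^{cs},\cF^{cu}$ into a singular ($2p$-pronged) configuration near the corresponding periodic set. You flag this yourself as the main obstacle, and indeed it is a gap: the invariant foliations of $f$ live on the $3$-manifold $T^1S$, not on $S$, and there is no direct mechanism forcing the singular topology of the surface foliations onto $\cF^{cs}$ or $\cF^{cu}$. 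The paper's proof makes no use of singularities at all.

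What the paper actually does is the following. First it proves a shadowing statement for \emph{center curves} only (not a leaf conjugacy of the weak foliations, which is more than you need and more than is established): every center leaf of $\tilde f$ on $T^1\DD^2$ is $\eps$-shadowed by a unique geodesic and conversely, the injectivity coming from a divergence argument inside a single center-unstable leaf (two distinct center curves in the same $cu$-leaf shadowed by the same geodesic would force an unstable leaf to meet a center leaf twice, impossible by Poincar\'e--Bendixson in $W^{cu}\cong\DD^2$). Consequently center leaves are parametrized by pairs of endpoints in $\partial\DD^2$ and $\tilde f$ acts on them by $\partial\tilde\Phi$. Then, after passing to an iterate, one takes a geodesic $\gamma$ of the \emph{unstable} lamination joining two \emph{expanding} fixed points of $\partial\tilde\Phi$; the center leaf $c$ shadowing $\gamma$ is $\tilde f$-invariant and coarsely contracted, hence contains a fixed point $x$. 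Taking $y\neq x$ on the stable manifold of $x$ inside the center-unstable leaf $L\ni c$, the center leaves $c_n$ through $\tilde f^n(y)$ have endpoints moving monotonically on $\partial\DD^2$ towards limits which are \emph{contracting} fixed points of $\partial\tilde\Phi$, hence distinct from the endpoints of $c$; the limiting center leaf $c_\infty$ therefore separates $c$ from all $c_n$ inside $L$, so $d(\pi(x),\pi(\tilde f^n y))$ is bounded below --- contradicting $y\in W^s(x)$. This is the point where both coherence (to have the $cu$-leaf $L$ and a genuine center foliation inside it) and the pseudo-Anosov hypothesis (expanding versus contracting fixed points at infinity) enter, and it is the step your proposal does not supply.
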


In fact, we will prove a slightly more general result. To state the result we need to recall the Nielsen-Thurston classification of mapping classes of $S$~\cite{Th, HTh}. According to this classification, each homeomorphism of a compact oriented surface $S$ of negative Euler characteristic with (possibly empty) boundary $\partial S$ is isotopic to one of the following ``normal forms'' $h\colon S\to S$ 
\begin{itemize}
\item $h$ is {\it periodic}, that is, $h^n=id_S$ for some $n\ge 1$,
 \item $h$ is {\it pseudo-Anosov},
\item $h$ is {\it reducible}. Namely, there exists a finite collection of simple closed curves whose union is invariant under $h$. This collection partitions $S$ into a finite collection of subsurfaces each of which is 
invariant under $h^n$ for some $n\ge 1$. Moreover, the restriction of $h^n$ to each of the subsurfaces is either periodic or pseudo-Anosov.
\end{itemize}

\begin{add}
 Theorem~\ref{t.incoherent} also holds if instead of assuming that mapping class $\varphi$ is pseudo-Anosov we assume that $\varphi$ is a reducible mapping class with at least one pseudo-Anosov component.
\end{add}
 
From now on we fix the hyperbolic surface $(S,\mu)$ its geodesic flow $\cG^t$ and the generating vector field $Z$.

\subsection{Shadowing property of the geodesic flow $\cG^t$.}

Let us first state the standard shadowing lemma for Anosov flows (see \cite[Theorem 18.1.6]{KH}). Recall that a $\delta$-\emph{pseudo-orbit} for a flow generated by a vector field $X$ is a curve $c: \mathbb{R} \to M$ such that $\|c'(t) - X(c(t))\| \leq \delta$ for all $t \in \RR$. We say that a curve $c: \mathbb{R}\to M$ is $\eps$-\emph{shadowed} by an orbit of $X$ if there exists $x\in M$ and a reparametrization $s: \mathbb{R} \to \mathbb{R}$  such that $|s'(t)-1| \leq \eps$ for all $t$ and such that $d(c(s(t)), X_t(x)) \leq \eps$ for all $t\in \mathbb{R}$. 

\begin{theo}\label{t.shadow}
Let $X$ be an Anosov flow on a closed manifold $M$. Then, for every $\eps>0$ there exists $\delta>0$ such that every $\delta$-pseudo orbit is $\eps$-shadowed by an orbit of $X$. Moreover, if $\eps$ is sufficiently small, then the shadowing orbit is unique. 
\end{theo}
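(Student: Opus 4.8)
This is the classical Anosov shadowing theorem, and the plan is to reproduce the standard contraction--mapping proof; the one genuinely new feature compared with the discrete hyperbolic case is the neutral flow direction $\RR X$, which is exactly what forces the time reparametrization $s$. First I would fix an adapted (Lyapunov) metric for the Anosov splitting $TM=E^{s}\oplus\RR X\oplus E^{u}$, so that for some $\lambda>0$ one has $\|DX_t v\|\le e^{-\lambda t}\|v\|$ for $v\in E^{s}$, $t\ge 0$, and $\|DX_{-t}v\|\le e^{-\lambda t}\|v\|$ for $v\in E^{u}$, $t\ge 0$. Then I would pick $\rho>0$ small enough that on every $\rho$-ball the splitting and $X$ vary little; that for each $p$ the transverse disk $D_p:=\exp_p\bigl(\{v\perp X(p):\|v\|<\rho\}\bigr)$ is a smooth disk transverse to the flow, on which $\exp_p$ furnishes coordinates $(v^{s},v^{u})\in E^{s}(p)\oplus E^{u}(p)$; and that for $q\in D_p$ close to $p$ the $X$-orbit of $q$ meets $D_{X_1(p)}$ at a unique first return time $\tau(q)$ with $|\tau(q)-1|$ small.

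Given the $\delta$-pseudo-orbit $c$, I would set $p_n:=c(n)$ for $n\in\ZZ$; from $\|c'(t)-X(c(t))\|\le\delta$ and Gronwall's inequality one gets $d(X_1(p_n),p_{n+1})\le C_0\delta$ with $C_0$ depending only on $X$. Using the coordinates above, for $\delta$ small I define first-return maps $F_n\colon D_{p_n}\to D_{p_{n+1}}$ (defined near the origin), with $\|F_n(0)\|\le C_1\delta$ and with $DF_n(0)$ respecting the $E^{s}\oplus E^{u}$ splitting up to $O(\rho)$ error and inheriting the hyperbolic rates of the adapted metric. A true $X$-orbit crossing every $D_{p_n}$ then corresponds exactly to a sequence $q_n\in D_{p_n}$ with $F_n(q_n)=q_{n+1}$ for all $n$; this is the fixed-point equation of the usual graph--transform operator $T$ on the Banach space of bounded bi-infinite sequences $\{(\xi_n^{s},\xi_n^{u})\}_{n\in\ZZ}$ with the sup norm (push the stable component forward along $F_n$, pull the unstable component back along $F_n^{-1}$), and uniform hyperbolicity makes $T$ a contraction with factor at most $e^{-\lambda}<1$ on a ball of radius $O(\delta)$, hence it has there a unique fixed point. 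Setting $x:=q_0$ and letting $\tau_n$ be the corresponding return times, the reparametrization $s$ is obtained by interpolating the partial sums $\sum_{k=0}^{n-1}\tau_k$; since $|\tau_n-1|=O(\delta)$ one gets $|s'(t)-1|=O(\delta)$, and between consecutive disks $d\bigl(c(s(t)),X_t(x)\bigr)=O(\delta)$. Choosing first $\rho$ and then $\delta$ small in terms of the target $\eps$ yields the $\eps$-shadowing statement.

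For uniqueness when $\eps$ is small, one argues that any $\eps$-shadowing orbit (with $\eps<\rho$) must cross each disk $D_{p_n}$ near the origin and therefore corresponds to a fixed point of $T$ inside the small ball, where the fixed point is unique; alternatively this follows from the expansiveness of Anosov flows (Anosov; see also Bowen--Walters), since two orbits that remain within a sufficiently small tube of each other for all time lie on the same orbit. The step I expect to be the main obstacle is the bookkeeping in the second paragraph: arranging the transverse disks $D_{p_n}$ and the return maps $F_n$ so that the neutral flow direction is genuinely quotiented out --- making each $F_n$ a bona fide hyperbolic map between disks with well-controlled linearization --- while simultaneously controlling the accumulated time errors $\tau_n-1$ so that the reparametrization satisfies $|s'-1|\le\eps$ and not merely a bound independent of $\delta$. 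Everything else is the standard hyperbolic fixed-point machinery, which is why in the paper we simply quote this as a known result.
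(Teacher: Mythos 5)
The paper gives no proof of this statement: it is quoted as the classical shadowing theorem for Anosov flows with a citation to \cite[Theorem 18.1.6]{KH}, so the citation is the intended ``proof.'' Your sketch is a correct outline of exactly that standard argument (adapted metric, transverse disks and return maps quotienting out the flow direction, contraction/graph transform on the Banach space of bounded bi-infinite sequences, reparametrization from the return times, and expansiveness for uniqueness), i.e.\ the same route as the cited reference.
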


In our setting, we also obtain shadowing injectivity as follows. We can parametrise center leaves as $(c, \vec c): \RR \to T^1S$ and the notation is to indicate that the projection into $S$ is the curve $c: \RR \to S$.

\begin{theo}\label{t.uniqueshadowing} 
 Let $Z$ be the vector field which generates the geodesic flow $\cG^t$ of a hyperbolic surface $S$. Then there is $\varepsilon_0>0$ such that for every
$\varepsilon\in(0,\varepsilon_0)$ there is $\delta>0$ with the following property: 
\begin{itemize}
\item consider  any dynamically coherent partially hyperbolic diffeomorphism $f\colon T^1 S\to T^1S$  whose subbundles $E^s_f,E^c_f$ and $E^u_f$ of its partially hyperbolic splitting are 
 $\delta$-close to the corresponding ones of $Z$, and let $X_f^c$ a vector field orienting $E^c_f$ so that $X_f^c$ is $\delta$-close to $Z$, 
\item consider  any pair of different center leaves $(c_1, \vec c_1), (c_2, \vec c_2) \colon \mathbb{R} \to T^1S$ tangent to $X_f^c$, 
\end{itemize}
 \noindent then the orbits of the geodesic flow $\cG^t$ that $\eps$-shadow $(c_1, \vec c_1)$ and $(c_2, \vec c_2)$ are different. 
\end{theo}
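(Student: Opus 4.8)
The plan is to argue by contradiction: suppose there is a single orbit $\cG^t(x)$ of the geodesic flow that simultaneously $\eps$-shadows two distinct center leaves $(c_1,\vec c_1)$ and $(c_2,\vec c_2)$ tangent to $X_f^c$. First I would fix $\eps_0$ small enough that the shadowing orbit in Theorem~\ref{t.shadow} is unique, and also small enough that an $\eps$-neighborhood of any geodesic orbit segment of a fixed length embeds nicely in a flow box; then I would choose $\delta$ so that, in addition to making the shadowing hypothesis applicable (a center leaf tangent to $X_f^c$ with $X_f^c$ being $\delta$-close to $Z$ is automatically a $\delta'$-pseudo-orbit of $Z$ with $\delta'\to 0$ as $\delta\to 0$, so Theorem~\ref{t.shadow} applies and gives the shadowing orbits), the bundles $E^s_f,E^c_f,E^u_f$ are close enough to those of $Z$ to control transversality. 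The contradiction I am aiming for is that the two center leaves $(c_1,\vec c_1)$ and $(c_2,\vec c_2)$, being both $\eps$-close to the same geodesic orbit for all time and both tangent to the coherent center foliation, must in fact coincide — contradicting that they are different leaves.

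The key mechanism is the following: in $T^1S$, two orbits of the geodesic flow that stay a bounded distance apart for all time (forward and backward) are the same orbit, because $\cG^t$ is Anosov and the center (flow) direction has no nontrivial central holonomy for a geodesic flow on a hyperbolic surface — distinct orbits either converge in forward time (same stable leaf) and diverge in backward time, or vice versa, or diverge in both. So it suffices to show that $(c_1,\vec c_1)$ and $(c_2,\vec c_2)$ lie on the same center leaf of $f$. Here is where dynamical coherence enters. Since both center curves are $\eps$-close to the same geodesic orbit $\cG^t(x)$ for all $t$, the point $(c_1,\vec c_1)(0)$ and the point $(c_2,\vec c_2)(0)$ are within $2\eps$ of each other and, more importantly, the whole leaves stay within $2\eps$ of each other in the Hausdorff sense along the flow direction. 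I would then use that $E^c_f$ is $\delta$-close to $\RR Z$, while $E^s_f\oplus E^u_f$ is $\delta$-close to $E^{ss}_Z\oplus E^{uu}_Z$, so the local center-stable and center-unstable leaves of $f$ through a point are uniformly transverse to the fibers and to each other and are graphs over the corresponding leaves of $Z$ in suitable local coordinates. Because $\cF^{cs}_f$ is $f$-invariant and its leaves are saturated by center curves, and because the two center curves remain uniformly close, a standard ``no-holonomy / uniqueness of integral curves in a foliated box'' argument shows that if $(c_1,\vec c_1)(0)$ and $(c_2,\vec c_2)(0)$ were on different center leaves, then one of the two quantities $d((c_1,\vec c_1)(t),(c_2,\vec c_2)(t))$ or its backward-time analog would grow without bound: the center-stable leaf through the second point would have to separate exponentially in the unstable direction from the first in backward time (and symmetrically for center-unstable in forward time), contradicting that both stay $\eps$-close to the single geodesic $\cG^t(x)$. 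More concretely, I would pick the unique geodesic orbit $\cG^t(y_1)$ through a point on $W^{ss}_Z$-fiber of $(c_1,\vec c_1)(0)$ and similarly $y_2$ for $c_2$; shadowing plus uniqueness forces $y_1$ and $y_2$ to lie on the same orbit of $\cG^t$ up to the reparametrizations, and then closeness of the $f$-bundles to the $Z$-bundles upgrades this to $(c_1,\vec c_1)(0)$ and $(c_2,\vec c_2)(0)$ lying on a common local center-stable and local center-unstable leaf of $f$, hence on a common center leaf.

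The main obstacle I expect is handling the reparametrizations in the definition of $\eps$-shadowing and making the ``uniformly close for all time forces same center leaf'' step rigorous without assuming plaque-expansiveness or any extra regularity of $\cF^c_f$. One has to be careful that the center curves tangent to $X_f^c$ are parametrized by the vector field and not by arclength, and that the shadowing reparametrization $s(t)$ with $|s'(t)-1|\le\eps$ does not accumulate enough drift to destroy the bounded-distance conclusion in infinite time — this is fine because being $\eps$-close in $T^1S$ is a $C^0$ statement that is preserved, and the geodesic flow's bounded-distance rigidity is itself a $C^0$ statement. The cleanest route is probably to phrase the final step as: the two leaves project to the same geodesic in the universal cover $\HH^2$ (an orbit of the lifted geodesic flow is determined by its pair of endpoints on $\partial\HH^2$, and two orbits $\eps$-close for all time have the same endpoints, hence are equal), lift everything to $\widetilde{T^1S}$, and observe that a lift of a center leaf of the coherent $f$ that $\eps$-shadows a given lifted geodesic orbit is unique because the lifted center-stable and center-unstable foliations are uniformly transverse graphs — this is where the $\delta$-closeness of the bundles is used — so the two lifts coincide, hence so do the two center leaves downstairs, the desired contradiction. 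I would then conclude that the shadowing orbits of $(c_1,\vec c_1)$ and $(c_2,\vec c_2)$ must be distinct, which is exactly the statement of Theorem~\ref{t.uniqueshadowing}.
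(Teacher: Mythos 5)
You have the right outline --- argue by contradiction, lift to $T^1\DD^2$, and reduce everything to showing that two distinct center leaves cannot both stay $\eps$-close to a single geodesic for all time --- but the heart of the matter is exactly the step you dispatch as ``a standard no-holonomy argument,'' and as stated that step fails. The hypothesis only bounds the Hausdorff distance between the two center curves as subsets (each is within $\eps$ of the same geodesic); it does not bound the distance between matched points. So even after you place $c_1$ and $c_2$ in the same center-unstable leaf, separated by a small strong-unstable displacement, iterating forward expands that displacement exponentially without any immediate contradiction: the iterate $\tilde f^n(y)$ of a point $y\in W^{u,loc}(x)\cap c_2$ eventually leaves $W^{u,loc}(\tilde f^n(x))$, but it can perfectly well land in $W^{u,loc}(\tilde f^n(z))$ for a \emph{different} point $z$ of $c_1$, so the two curves remain Hausdorff-close. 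Your claim that ``one of the two quantities $d(c_1(t),c_2(t))$ or its backward analog would grow without bound'' is therefore unjustified (and the direction is off: center-stable leaves separated in the unstable direction diverge in \emph{forward} time). Likewise, your closing ``observation'' that a lift of a center leaf shadowing a given lifted geodesic is unique because the lifted foliations are uniformly transverse graphs is essentially a restatement of the theorem, not an argument.

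The paper closes this gap with two concrete moves you would need. First, it reduces to the case where $c_1$ and $c_2$ lie in the same center-unstable leaf by replacing $c_2$ with the heteroclinic center leaf $W^{u,loc}(c_1,\vec c_1)\cap W^{s,loc}(c_2,\vec c_2)$, which is still $K\eps$-shadowed by the same geodesic. Second --- and this is the key missing idea --- it shows that the scenario ``$\tilde f^n(y)\in W^{u,loc}(\tilde f^n(z))$ for some $z\neq x$ on $c_1$'' forces the single strong unstable leaf through $x$, $y$ and $z$ to intersect the center leaf $(c_1,\vec c_1)$ in two distinct points inside the plane $W^{cu}(x)\cong\DD^2$, which is impossible by a Poincar\'e--Bendixson index argument for transverse foliations of the plane. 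Without this topological input (or some substitute) the contradiction never materializes. The paper also has to verify two quantitative points absent from your sketch: that the geodesics shadowing $\tilde f^n(c_1,\vec c_1)$ and $\tilde f^n(c_2,\vec c_2)$ coincide for every $n$ (because geodesics of $\DD^2$ at finite distance are equal), and that the distance between the iterated curves measured \emph{inside} the center-unstable leaf stays bounded by $2C\eps$ uniformly in $n$, which is what keeps $\tilde f^n(c_2,\vec c_2)$ inside the local unstable saturation of $\tilde f^n(c_1,\vec c_1)$ in the first place.
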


The argument will be carried out on the cover $T^1\DD^2$ and we need to introduce some notation first.
The Poincar\'e disk $\DD^2$ covers the hyperbolic surface $(S,\mu)$ so that the covering map is a local isometry. Let $d_{\DD^2}$ denote the metric on ${\DD^2}$ induced by the hyperbolic Riemannian metric. We equip $T^1\DD^2$ with the product Riemannian metric (which is $\pi_1S$-equivariant) and denote by $d_{T^1\DD^2}$ the induced metric. 

\begin{proof} 
To prove the theorem, we lift all the structures to $T^1\DD^2$. We denote the
    lift of $f$ by $\tilde f\colon T^1\DD^2\to T^1\DD^2$. First note that it is
    sufficient to establish 
    the ``shadowing injectivity property" on $T^1\DD^2$.
    (Indeed, if a geodesic $(\gamma,\vec{\gamma})\colon\RR\to T^1\DD^2$ shadows a center leaf $(c,\vec{c})\colon \RR\to T^1\DD^2$, and its image under a Deck transformation $g (\gamma,\vec{\gamma})$ shadows another center leaf $(c',\vec{c'})$, then $(\gamma,\vec{\gamma})$ shadows both $(c,\vec{c})$ and $g^{-1} (c',\vec{c'})$. Hence, by shadowing injectivity on the cover we would have $(c',\vec{c'})=g(c, \vec{c})$ and, hence, posited injectivity on $T^1S$.)

    Note that according to our assumptions on subbundles of $f$ the size of
    local product structure neighborhoods is uniform in $\varepsilon$; that
    is, the size stays fixed when we let $\varepsilon\searrow 0$ (and
    $\delta\searrow 0$). Hence we assume that $\varepsilon$
    is much smaller than the size of local product structure neighborhoods.

Each center leaf of $\tilde f$ has the form $(c,\vec{c})\colon \RR\to T^1\DD^2$ where $c\colon\RR\to\DD^2$. Further, by dynamical coherence, each center leaf $(c, \vec{c})$ is contained in a unique center-unstable leaf $W^{cu}(c, \vec{c})$ which is uniformly transverse (\ie with angle bounded away from 0) to the circle fibers of $\pi$. It follows that $\pi\colon W^{cu}(c, \vec{c})\to \DD^2$ is a covering map and, hence, is a diffeomorphism. Further, because $\pi$ is a Riemannian submersion and the angle between $E^s\oplus E^{uu}$ and the circle fibers is uniformly bounded away from 0 (uniformly in $\varepsilon$) we also have that $\|D((\pi|_{W^{cu}(x)})^{-1})\|$ is uniformly bounded and, hence,
\begin{equation}
\label{eq_pi_bound}
d_{W^{cu}(x)}(x_1, x_2)\le C d_{\DD^2}(\pi(x_1),\pi(x_2)),
\end{equation}
where $x_2\in W^{cu}(x_1)$, $d_{W^{cu}(x)}$ is the metric induced by the restriction of the Riemannian metric to $W^{cu}(x)$ and $C$ is a constant which does not depend on $\varepsilon$.

We will argue by contradiction. Assume that a geodesic $(\gamma,\vec{\gamma})\colon\RR\to T^1\DD^2$ $\varepsilon$-shadows two distinct center leaves $(c_1, \vec{c_1})\colon\RR\to T^1\DD^2$ and $(c_2, \vec{c_2})\colon\RR\to T^1\DD^2$. Then, because $\pi$ does not increase distance and, using the triangle inequality, we have
\begin{equation}
\label{eq_2e}
d^\cH_{\DD^2}(c_1,c_2)\le d^\cH_{T^1\DD^2}((c_1, \vec{c_1}), (c_2, \vec{c_2}))\le 2\varepsilon,
\end{equation}
where superscript $\cH$ indicates the Hausdorff distance with respect to the corresponding metric.

Without loss of generality we can assume that $(c_1, \vec{c_1})$ and $(c_2, \vec{c_2})$ belong to the same center-unstable leaf. To see this consider the ``heteroclinic center leaf" $(c_3, \vec c_3)= W^{u, loc}(c_1, \vec c_1)\cap W^{s,loc}(c_2, \vec c_2)$. Clearly this leaf is also $K\varepsilon$-shadowed by $\gamma$, where $K$ is a constant associated with the local product structure and is independent of $\varepsilon$. Also $c_3$ is different either from $c_1$ or from $c_2$ (or both). We assume that $c_1$ and $c_3$ are distinct (otherwise the arguments are the same with the time direction reversed). Hence, we can conclude that, by choosing $\varepsilon$ smaller and by replacing $c_2$ with $c_3$, we indeed can assume that the center leaves $(c_1, \vec c_1)$ and $(c_2, \vec c_2)$ are contained in the same center-unstable leaf.

For any $n\ge 1$ consider center leaves $\tilde f^n(c_1, \vec c_1)$ and $\tilde f^n(c_2, \vec c_2)$. Let $\gamma_1$ and $\gamma_2$ be the geodesics which shadow them, respectively. By~(\ref{eq_2e}) and because $\|D\tilde f\|$ is uniformly bounded, we see that $\gamma_1$ and $\gamma_2$ are finite distance apart (as oriented geodesics).  Recall that in $\DD^2$ geodesics either coincide or diverge exponentially in positive or negative time. Hence $\gamma_1=\gamma_2$. By the same token as before~(\ref{eq_2e}) we also have
\begin{equation}
\label{eq_2e_n}
d^\cH_{\DD^2}( \pi\tilde f^n(c_1, \vec c_1), \pi\tilde f^n(c_2, \vec c_2))\le d^\cH_{T^1\DD^2}(\tilde f^n(c_1, \vec c_1), \tilde f^n(c_2, \vec c_2))\le 2\varepsilon
\end{equation}

Now for each center leaf consider its neighborhood in the center-unstable leaf
$$
W^{cu, loc}(c, \vec c)=\bigcup_{x\in(c,\vec c)}W^{u, loc}(x)\subset W^{cu}(c, \vec c),
$$
where the size of $W^{u, loc}$ is the size of local product structure neighborhoods and hence is much bigger than $\varepsilon$. For all $n\ge 0$, using~(\ref{eq_pi_bound}) and~(\ref{eq_2e_n}), we have
\begin{multline*}
d^\cH_{W^{cu}(\tilde f^n(c_1,\vec c_1))}(\tilde f^n(c_1, \vec c_1), \tilde f^n(c_2, \vec c_2))\le C d^\cH_{\DD^2}(\pi (\tilde f^n(c_1, \vec c_1), \pi \tilde f^n(c_2, \vec c_2))  \\
 \le C d^\cH_{T^1\DD^2}(\tilde f^n(c_1, \vec c_1), \tilde f^n(c_2,  \vec c_2))\le 2C\varepsilon
\end{multline*}
Recall that $C$ is independent of $\varepsilon$. Hence, by the above inequality, we can pick $\varepsilon$ sufficiently small so that $\tilde f^n(c_2, \vec c_2)$ belongs to $W^{u, loc}(\tilde f^n(c_1, \vec c_1))$ for all $n\ge 0$. Now pick an $x\in(c_1, \vec c_1)$ and let $y\in W^{u,loc}(x)\cap (c_2, \vec c_2)$. Then $\tilde f^n(x)$ and $\tilde f^n(y)$ diverge exponentially quickly along the unstable leaf. Pick the smallest $n\ge 1$ such that $\tilde f^n(y)\notin W^{u, loc}(\tilde f^n(x))$. On the other hand $\tilde f^n(y)\in \tilde f^n(c_2, \vec c_2)\subset W^{u,loc}(c_1, \vec c_1)$. Therefore there exists a point  $z\in (c_1, \vec c_1)$ such that $\tilde f^n(y)\in W^{u,loc}(\tilde f^n(z))$. We conclude that the unstable leaf $W^u(x)$ intersects the center leaf $(c_1, \vec c_1)$ in two distinct points $x$ and $z$. But such a configuration is impossible inside $W^{cu}(x)$ (which is diffeomorphic to $\DD^2$) by the standard Poincare-Bendixon's argument (see e.g. \cite[Section 14.1]{KH}). 
\end{proof}

The following global shadowing property of (general) Anosov flows is a straightforward consequence of \cite[Theorem 18.1.7]{KH}: 
\begin{theo} \label{t.globalshadowing} Let $X$ be an Anosov flow on a compact manifold $M$. Then for any $\varepsilon>0$ there is $\delta>0$ with the following property: 
consider  any $C^0$-foliation $\cF$ directed by a vector field $Y$ such that, for every $x\in M$, 
$$\|Y(x)-X(x)\|<\delta.$$
Then there is  $h\colon M\to M$ with the following property
\begin{itemize}
 \item  for any $x\in M$ one has $d(h(x),x)<\varepsilon$.
 \item for any leaf $L$ of $\cF$ the image $h(L)$ is exactly an orbit of $X$
 \item $h$ is continuous and onto: in particular, for any orbit $\gamma$ of $X$ there is a leaf $L$
 with $h(L)=\gamma$.
\end{itemize}
\end{theo}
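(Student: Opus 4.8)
The plan is to recognize every leaf of $\cF$ as a genuine $\delta$-pseudo-orbit of $X$ and then to let $h$ record the shadowing correspondence furnished by Theorem~\ref{t.shadow} (in the refined form \cite[Theorem 18.1.7]{KH}). First I would note that an Anosov flow has no singularities, so $\|X(x)\|\ge c_0>0$ for all $x$; hence for $\delta<c_0$ the vector field $Y$ is nowhere zero, each leaf of $\cF$ is a non-singular curve diffeomorphic to $\RR$ or to a circle, and it may be parametrized by $c_L$ with $\dot c_L=Y\circ c_L$. Then $\|\dot c_L(t)-X(c_L(t))\|=\|Y(c_L(t))-X(c_L(t))\|<\delta$, so $c_L$ is a $\delta$-pseudo-orbit of $X$ (a closed one if $L$ is a circle leaf).

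Given $\eps>0$, which I may shrink below the injectivity radius of $M$, I would choose $\delta>0$ from the shadowing lemma, small enough that the shadowing orbit of any $\delta$-pseudo-orbit, together with its reparametrization, is unique up to time translation; closed pseudo-orbits are then shadowed by periodic orbits. For $x\in M$ let $L$ be the unique leaf of $\cF$ through $x$ — here the foliation hypothesis, rather than merely the vector field $Y$, is what makes $L$ well defined — and let $\gamma_L$ be the orbit of $X$ that $\eps$-shadows $c_L$. Define $h(x)$ to be the point of $\gamma_L$ matched to $x$ by the shadowing reparametrization. By construction $d(h(x),x)<\eps$; as $x$ ranges over $L$ the matched point ranges over all of $\gamma_L$ (the reparametrization being a proper increasing bijection), so $h(L)=\gamma_L$ is exactly an orbit of $X$; well-definedness of $h$ is precisely the uniqueness of the shadowing orbit.

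It then remains to check continuity and surjectivity of $h$. For continuity, if $x_n\to x$ then the leaves through $x_n$ converge on compact pieces to the leaf through $x$, since $\cF$ is a $C^0$ foliation, so the associated $\delta$-pseudo-orbits converge uniformly on compact sets; by uniqueness of the shadowing orbit together with compactness of $M$ the orbits $\gamma_{L_n}$ and the matched points converge to $\gamma_L$ and to $h(x)$, whence $h(x_n)\to h(x)$. For surjectivity, $d(h(x),x)<\eps$ with $\eps$ below the injectivity radius lets one slide $x$ to $h(x)$ along the unique short geodesic, giving a homotopy from $\mathrm{id}_M$ to $h$; a self-map of a closed manifold homotopic to the identity acts as the identity on top mod-$2$ homology and therefore cannot omit a point, so $h$ is onto. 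Finally, for the ``in particular'' clause: given an orbit $\gamma$ of $X$, pick $x$ with $h(x)\in\gamma$ and let $L$ be the leaf through $x$; then $h(L)=\gamma_L$ is an orbit of $X$ containing $h(x)\in\gamma$, hence $\gamma_L=\gamma$.

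The only genuinely delicate point is the continuity of $h$: it rests on the continuity of the leaves of $\cF$ combined with the continuous — not merely pointwise — dependence of the shadowing orbit on the pseudo-orbit, which is exactly why one invokes the refined shadowing statement \cite[Theorem 18.1.7]{KH} rather than Theorem~\ref{t.shadow} alone. Everything else (that leaves are pseudo-orbits, that $h$ moves points by less than $\eps$, that $h$ carries leaves onto orbits, and the homotopy-and-degree argument for surjectivity) is routine bookkeeping.
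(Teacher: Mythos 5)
Your proposal is correct and takes essentially the same route as the paper, whose entire proof is the remark that the statement is a straightforward consequence of \cite[Theorem 18.1.7]{KH}. Your write-up simply unwinds that reduction (leaves of $\cF$ are $\delta$-pseudo-orbits of $X$, shadowing assigns an orbit to each leaf, degree theory gives surjectivity), and you correctly defer the one genuinely delicate point --- that the pointwise matching $x\mapsto h(x)$ is canonically and continuously defined, which does not follow from uniqueness of the shadowing \emph{orbit} alone, since the time-reparametrization is only determined up to small shifts --- to the refined statement of \cite[Theorem 18.1.7]{KH}, exactly as the paper does.
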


Putting together Theorems \ref{t.globalshadowing} and~\ref{t.uniqueshadowing} we immediately obtain the following.

\begin{coro}\label{c.shadowingpartial}
For every $\eps>0$ there exists $\delta>0$ such that if $f: T^1S \to T^1S$ is a dynamically coherent partially hyperbolic diffeomorphism whose splitting is $\delta$-close to the one of $Z$, then, every center leaf $c:\mathbb{R} \to T^1S$ tangent to $X^c_f$ parametrized by arc-length is $\eps$-shadowed by a unique orbit of $Z$. Moreover, for every orbit $\gamma$ of $Z$ there is a unique center curve which is $\eps$-shadowed by $\gamma$. 
\end{coro}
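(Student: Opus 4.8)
The plan is to read off the statement by bookkeeping constants across the three shadowing results just proved. Fix $\eps>0$ and, shrinking it if necessary, assume $\eps<\eps_0$ (the constant from Theorem~\ref{t.uniqueshadowing}) and that $\eps$ is small enough for the shadowing orbit in Theorem~\ref{t.shadow} to be unique. Let $\delta$ be the least of the constants $\delta$ produced, for this $\eps$ and for the Anosov flow $X=Z$, by Theorems~\ref{t.shadow}, \ref{t.uniqueshadowing} and~\ref{t.globalshadowing}; shrinking $\delta$ once more, I would also arrange that $\delta$-closeness of $E^c_f$ to $\RR Z$ guarantees the existence of a continuous unit vector field $X^c_f$ directing $E^c_f$ with $\|X^c_f(x)-Z(x)\|<\delta$ for all $x$, namely the orientation of $E^c_f$ for which $\langle X^c_f,Z\rangle>0$, which is globally well defined because $E^c_f$ is close to the globally defined line field $\RR Z$. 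Now let $f$ be dynamically coherent and partially hyperbolic with splitting $\delta$-close to that of $Z$, and let $\cF^c_f$ be its center foliation, a one-dimensional $C^0$ foliation directed by $X^c_f$.

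First I would prove the existence-and-uniqueness half. A center leaf of $f$ parametrized by arc length is a $C^1$ curve $c\colon\RR\to T^1S$ with $c'(t)=X^c_f(c(t))$, hence $\|c'(t)-Z(c(t))\|<\delta$ for all $t$, so $c$ is a $\delta$-pseudo-orbit of $Z$. By Theorem~\ref{t.shadow} it is $\eps$-shadowed by an orbit of $Z$, and by our choice of $\eps$ this shadowing orbit is unique; this is the first assertion, and assigning to each center leaf its shadowing orbit defines a map $\Phi$ from the set of center leaves of $f$ to the set of orbits of $Z$.

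Next I would show $\Phi$ is a bijection. Injectivity of $\Phi$ is precisely Theorem~\ref{t.uniqueshadowing}: distinct center leaves of $f$ tangent to $X^c_f$ are $\eps$-shadowed by distinct orbits of $Z$. For surjectivity I would apply Theorem~\ref{t.globalshadowing} to the foliation $\cF=\cF^c_f$ with $Y=X^c_f$ (permissible since $\|X^c_f-Z\|<\delta$): this produces a continuous surjection $h\colon T^1S\to T^1S$ with $d(h(x),x)<\eps$ which carries every center leaf onto an orbit of $Z$. Given an orbit $\gamma$ of $Z$, pick a center leaf $L$ with $h(L)=\gamma$; then $L$ lies at Hausdorff distance at most $\eps$ from $\gamma$, while $L$ is also $\eps$-shadowed by $\Phi(L)$, so $\gamma$ and $\Phi(L)$ are two orbits of $Z$ at Hausdorff distance at most $2\eps$ from each other, whence $\gamma=\Phi(L)$ by expansiveness of the Anosov flow $Z$ (for $\eps$ small). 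Thus $L$ is $\eps$-shadowed by $\gamma$, i.e. $\Phi(L)=\gamma$, and the uniqueness of such $L$ is exactly the injectivity of $\Phi$. This is the ``moreover'' part, and the proof is complete.

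I do not expect a serious obstacle: the statement really is an assembly of the three cited theorems. The only step that is more than routine bookkeeping is the last one --- identifying the orbit $h(L)$ output by Theorem~\ref{t.globalshadowing} with the shadowing orbit $\Phi(L)$ output by Theorem~\ref{t.shadow} --- which rests on the uniqueness part of the shadowing lemma (equivalently, expansiveness of $Z$ together with the standard fellow-traveller property for Anosov flows) and on the elementary observation that both an $\eps$-shadowing and the $\eps$-closeness of $h$ to the identity control Hausdorff distance by $\eps$. I would also take minor care that the arc-length parametrization of a center leaf has velocity $X^c_f$ which is $\delta$-close to the unit field $Z$, so that the reparametrization relating it to the $Z$-parametrization of the shadowing orbit is automatically close to the identity, as required by the definition of $\eps$-shadowing.
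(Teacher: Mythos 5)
Your proposal is correct and follows essentially the same route as the paper, which simply asserts that the corollary is obtained ``immediately'' by putting together Theorems~\ref{t.globalshadowing} and~\ref{t.uniqueshadowing}; you have merely filled in the bookkeeping (existence via the shadowing lemma, injectivity via Theorem~\ref{t.uniqueshadowing}, surjectivity via the surjectivity of $h$ in Theorem~\ref{t.globalshadowing}). The one step to phrase carefully is the identification of $h(L)$ with $\Phi(L)$: rather than comparing the two orbits by Hausdorff distance as subsets of the compact manifold (where distinct orbits can be Hausdorff-close), you should compare them as parametrized curves fellow-travelling $L$ and invoke expansiveness, exactly as your closing remark indicates.
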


\subsection{Dynamics at infinity}
 
We compactify the Poincar\'e disk $\DD^2$ by adding the ``the circle at infinity" $\partial \DD^2$ with the topology induced by the Euclidean topology of $\RR^2\supset \DD^2\cup\partial \DD^2$. Recall that $\partial\DD^2$ can be viewed as the space of asymptotic classes of geodesics in $\DD^2$. Further, each unit speed geodesic $\gamma$  in $\DD^2$ is  uniquely determined by an ordered pair of {\it boundary points} $(\gamma_-,\gamma_+)\in \partial \DD^2\times \partial \DD^2\backslash\Delta$, where
$$
\gamma_-=\lim_{t\to-\infty}\gamma(t),\,\,\,\,\, \gamma_+=\lim_{t\to+\infty}\gamma(t)
$$
In the same way any curve $c\colon\RR\to \DD^2$ which stays a bounded distance away from a geodesic determines a pair of boundary points $(c_-, c_+)\in \partial \DD^2\times \partial \DD^2\backslash\Delta$.

Now let $\Psi\colon S\to S$ be a homeomorphism and let $\tilde \Psi\colon\DD^2\to\DD^2$ be a lift. Then the action of $\tilde\Psi$ on geodesic rays induces a homeomorphism $\partial\tilde\Psi\colon \partial\DD^2\to \partial\DD^2$ which depends only on the isotopy class of $\Psi$ (but does depend on the choice of the lift). Namely, if $\Psi'\simeq\Psi$ and $\tilde\Psi'\simeq\tilde\Psi$ is the lift of the isotopy, then $\partial\tilde\Psi'=\partial\tilde\Psi$. Further if $\Psi$ is pseudo-Anosov, then for any lift $\tilde\Psi$ the boundary homeomorphism $\partial\tilde\Psi$ has finitely many fixed points on $\partial\DD^2$ alternatively expanding and contracting (see \eg~\cite[Theorem 5.5]{CB}). Moreover, the lift of the unstable geodesic lamination $\tilde L^u$ of the mapping class of $\Psi$ contains the geodesics joining consecutive expanding fixed points.

In the case when $\Psi$ is reducible, the structure of the fixed points at infinity can be more complicated, but the following proposition is still true.

\begin{prop} ( \cite[\textsection 5, \textsection 6]{CB},\cite[Section 9]{M})
\label{prop_geodesic}
Let $\Psi\colon S\to S$ be a pseudo-Anosov homeomorphism or a reducible homeomorphism with a pseudo-Anosov component. Then there is a positive iterate $\partial\tilde \Psi^n$ which has two distinct expanding  fixed points $\gamma_-$ and $\gamma_+$  and the geodesic $\gamma$ which is determined by these points belongs to the lift of the unstable geodesic lamination $\tilde L^u$. Further $\Psi^n$ can be isotoped to $\Phi$ such that corresponding lift $\tilde \Phi\simeq \tilde \Psi^n$ preserves the unstable geodesic lamination and hence preserves $\gamma$.
\end{prop}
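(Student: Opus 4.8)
The plan is to invoke the Nielsen--Thurston theory of surface homeomorphisms together with the classical description of the action on the circle at infinity, and to reduce to the purely pseudo-Anosov case by passing to an appropriate iterate and an appropriate invariant subsurface. First I would dispose of the case in which $\Psi$ is itself pseudo-Anosov: here it is classical (see \cite{FaMa,CB}) that any lift $\tilde\Psi$ to $\DD^2$ induces a boundary homeomorphism $\partial\tilde\Psi$ with an even number $2k$ ($k\ge 1$) of fixed points, alternately expanding and contracting, and that consecutive expanding fixed points are joined by a leaf of the lift $\tilde L^u$ of the unstable geodesic lamination. If $k\ge 2$ we already have two distinct expanding fixed points $\gamma_-,\gamma_+$ joined by a leaf $\gamma\subset\tilde L^u$, and $\tilde\Psi$ preserves $\gamma$ after isotoping $\Psi$ to its ``geodesic'' (Thurston) representative $\Phi$ which preserves $L^u$. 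If $k=1$ there is a single expanding and a single contracting fixed point; then I would pass to the iterate $\Psi^2$ and choose a different lift (equivalently, compose with a deck transformation), which produces new expanding fixed points — concretely, a leaf of $\tilde L^u$ whose endpoints are expanding for a suitable lift of $\Psi^2$ exists because $L^u$ has leaves, each leaf is a geodesic, and the deck transformations act on the (nonempty, perfect) set of endpoints of $\tilde L^u$ with dense orbits; picking such a leaf $\gamma$ and the lift $\tilde\Phi$ of the appropriate power of the geodesic representative $\Phi$ that fixes both endpoints of $\gamma$ gives the statement.

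Next I would treat the reducible case with a pseudo-Anosov component. Let $\mathcal C$ be the (isotopy class of the) reducing multicurve, realized geodesically, and let $S_0\subset S$ be a component of $S\setminus\mathcal C$ on which some power $\Psi^m$ restricts to a pseudo-Anosov homeomorphism $\Psi_0$ of $S_0$. Replacing $\Psi$ by $\Psi^m$ and working with the geodesic (Thurston) representative $\Phi$ of this power that preserves $\mathcal C$ and each such component, the restriction $\Phi|_{S_0}$ carries an unstable geodesic lamination $L^u_0$ which is a sublamination of the unstable lamination $L^u$ of the reducible class (after spinning leaves appropriately around $\mathcal C$; see \cite[\S5--6]{CB} and \cite[\S9]{M}). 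Lifting $S_0$ to a component $\tilde S_0$ of the preimage in $\DD^2$, the cover $\tilde S_0$ is a convex (hence with well-defined limit set) subset of $\DD^2$, and the restriction of an appropriate lift $\tilde\Phi$ to $\tilde S_0$ induces on the limit set $\Lambda_0\subset\partial\DD^2$ a dynamics of ``North--South'' type on each piece, exactly as in the pseudo-Anosov case; applying the argument of the previous paragraph inside $\tilde S_0$ yields, after possibly one more squaring and a change of lift, two distinct expanding fixed points $\gamma_\pm\in\Lambda_0\subset\partial\DD^2$ joined by a leaf $\gamma$ of $\tilde L^u_0\subset\tilde L^u$, and a lift $\tilde\Phi\simeq\tilde\Psi^n$ preserving $L^u$ and hence $\gamma$.

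The two main technical obstacles I anticipate are, first, the bookkeeping of lifts and deck transformations needed to guarantee \emph{two distinct} expanding fixed points (a single pseudo-Anosov piece with one prong can a priori give a lift with only one expanding and one contracting boundary fixed point, which is why the passage to an iterate and the explicit choice of a leaf of $\tilde L^u$ is essential), and second, the identification — in the reducible case — of the unstable lamination of the reducible mapping class with the union of the unstable laminations of the pseudo-Anosov pieces together with leaves spiralling onto the reducing curves, which is where the references \cite{CB} and \cite{M} do the real work and which I would quote rather than reprove. Everything else (convexity of $\tilde S_0$, existence of the boundary map and its independence of the representative within the isotopy class, the North--South picture at the expanding/contracting fixed points) is standard Nielsen--Thurston theory and I would cite \cite{FaMa,CB,M} for it.
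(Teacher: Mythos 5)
The paper does not actually prove Proposition~\ref{prop_geodesic}; it is quoted from \cite{CB} and \cite{M}, so there is no internal argument to compare yours against. Your overall route is the standard Nielsen--Thurston one that those references carry out: boundary dynamics of lifts in the pseudo-Anosov case, and reduction of the reducible case to a pseudo-Anosov piece via the limit set of a lifted invariant subsurface, with the identification of the unstable lamination of the reducible class deferred to \cite{CB} and \cite{M}. That division of labour is exactly what the paper intends by stating the proposition with those citations.

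One step of your reduction is not right as written. It is not true that an arbitrary lift of a pseudo-Anosov map has $2k\ge 2$ fixed points on $\partial\DD^2$ alternately expanding and contracting: a general lift $T\circ\tilde\Psi$ may have no boundary fixed points at all, and a lift with exactly two has one expanding and one contracting. Your repair of the case $k=1$ --- squaring and ``choosing a different lift'' because deck transformations move endpoints of $\tilde L^u$ densely --- does not produce a lift of a power of $\Psi$ that fixes \emph{both} endpoints of a single leaf with expanding behaviour at each; density of an orbit gives no fixed leaf. The standard argument, and the one in \cite[\textsection 5--6]{CB}, goes through an \emph{interior} periodic point instead: a pseudo-Anosov map has a periodic point $x$ (Lefschetz, or a Markov partition), some power fixes $x$ together with its $p\ge 2$ prongs, and the lift of that power fixing a preimage $\tilde x$ has exactly $2p\ge 4$ boundary fixed points, alternately expanding and contracting, with consecutive expanding ones joined by a leaf of $\tilde L^u$ (the unstable leaf through $\tilde x$ when $x$ is regular, or a boundary leaf of the ideal $p$-gon complementary region at a singularity). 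Substituting this for your ``$k=1$'' branch, and applying the same device inside the pseudo-Anosov piece in the reducible case (where, incidentally, the lift $\tilde S_0$ is only quasiconvex, not convex, but its limit set is all you use), yields the intended proof.
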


\begin{rema}\label{rem_terminology}
We remark that there is a discrepancy between the terminology in hyperbolic dynamics and the terminology for laminations (and singular foliations) of pseudo-Anosov diffeomorphisms. Namely, the {\it stable geodesic lamination} is the one which {\it  expands } sufficiently large segments of its geodesics. We refer to books~\cite{CB, Cal} for further background on geodesic laminations and pseudo-Anosov theory.
\end{rema} 

\subsection{Dynamics at infinity and the center leaves}\label{ss.centerleafdyn}

Here we consider a dynamically coherent partially hyperbolic diffeomorphism $f\colon T^1S\to T^1S$ which satisfies the assumptions of Theorem~\ref{t.uniqueshadowing}. Assume that there is a diffeomorphism $\Phi\colon S\to S$ such that $f$ is isotopic to $D\Phi\colon T^1S\to T^1S$. Consider the $\pi_1S$-covers $\DD^2\to S$ , $T^1\DD^2\to T^1S$ and lifts $\tilde\Phi\colon \DD^2\to\DD^2$,  $\tilde f\colon T^1\DD^2\to T^1\DD^2$ such that the isotopy $f\simeq D\Phi$ lifts to an isotopy $\tilde f\simeq D\tilde\Phi$. As $\tilde f\simeq D\tilde\Phi$ is an equivariant isotopy, we have
\begin{equation}
\label{eq_bounded1}
d_{T^1\DD^2}(\tilde f, D\tilde\Phi)<C
\end{equation}
and because the bundle map $\pi\colon T^1\DD^2\to\DD^2$ is a Riemannian submersion, we also have
\begin{equation}
\label{eq_bounded2}
d_{\DD^2}(\pi\circ\tilde f,\tilde \Phi)<C.
\end{equation}
Consider the center leaves for $\tilde f$, \ie connected components of preimages of center leaves for $f$ under the covering map. Recall that each center leaf has the form $(c,\vec c)\colon \RR\to T^1\DD^2$ where $c\colon\RR\to\DD^2$. We slightly abuse terminology by referring to the underlying curve $c\colon \RR\to\DD^2$ as a center leaf as well. This is harmless because $c\colon \RR\to\DD^2$ uniquely determines the center leaf. By Theorem~\ref{t.uniqueshadowing} each center leaf $c$ is shadowed by a unique geodesic $\gamma$, and hence each center leaf $c$ is uniquely determined by a pair of points on the boundary and vice versa. 
According to the following claim the dynamics of $\tilde f$ on the space of center leaves is uniquely determined by the dynamics on the boundary.

\begin{claim}
\label{claim_center_dynamics}
For a center leaf $(c, \vec c)$ of $\tilde f$,
 let $(c_1,\vec c_1)=\tilde f(c,\vec c)$.
    Then
$$
(c_{1-},c_{1+})=(\partial\tilde\Phi(c_-), \partial\tilde\Phi(c_+)),
$$
where $(c_-,c_+)$ and $(c_{1-},c_{1+})$
are the boundary points of $c$ and $c_1$ respectively.
\end{claim}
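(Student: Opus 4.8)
The plan is to show that the boundary points of $c_1$ agree with the $\partial\tilde\Phi$-images of the boundary points of $c$ by exploiting the fact that all the relevant curves — the center leaf $c$, its image $c_1=\pi\circ\tilde f\circ(c,\vec c)$, the shadowing geodesics, and their $\tilde\Phi$-images — stay within uniformly bounded Hausdorff distance of each other, and that in $\DD^2$ a curve's pair of endpoints on $\partial\DD^2$ is a quasi-isometry invariant: two curves staying a bounded distance apart determine the same pair of boundary points (as discussed just before Proposition~\ref{prop_geodesic}). First I would recall, via Corollary~\ref{c.shadowingpartial} (lifted to $T^1\DD^2$ as in the proof of Theorem~\ref{t.uniqueshadowing}), that $c$ is $\varepsilon$-shadowed by a unique geodesic $\gamma$, so $(c_-,c_+)=(\gamma_-,\gamma_+)$; similarly $c_1$ is $\varepsilon$-shadowed by a unique geodesic $\gamma_1$ with $(c_{1-},c_{1+})=(\gamma_{1-},\gamma_{1+})$.

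Next I would chain the bounded-distance estimates. Since $(c,\vec c)$ is $\varepsilon$-shadowed by the geodesic lift $(\gamma,\vec\gamma)$, applying $\tilde f$ and using that $\|D\tilde f\|$ is uniformly bounded shows $\tilde f(c,\vec c)=(c_1,\vec c_1)$ is a bounded Hausdorff distance from $\tilde f(\gamma,\vec\gamma)$; projecting by the Riemannian submersion $\pi$, the curve $c_1$ is a bounded distance from $\pi\circ\tilde f\circ(\gamma,\vec\gamma)$. Now $(\gamma,\vec\gamma)$ is the unit tangent lift of the geodesic $\gamma$, so by~(\ref{eq_bounded2}) the curve $\pi\circ\tilde f\circ(\gamma,\vec\gamma)$ is within distance $C$ of $\tilde\Phi\circ\gamma$. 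Finally, $\tilde\Phi\circ\gamma$ is a bounded distance from the geodesic with endpoints $(\partial\tilde\Phi(\gamma_-),\partial\tilde\Phi(\gamma_+))$ — this is precisely the defining property of the boundary homeomorphism $\partial\tilde\Phi$: $\tilde\Phi$ is a bounded distance from an isometry-like map at the level of the boundary, so it sends geodesics to quasi-geodesics with the prescribed endpoints. Combining, $c_1$ stays a bounded distance from a geodesic with endpoints $(\partial\tilde\Phi(\gamma_-),\partial\tilde\Phi(\gamma_+))=(\partial\tilde\Phi(c_-),\partial\tilde\Phi(c_+))$, and by uniqueness of the shadowing geodesic $\gamma_1$ and the boundary-point invariance, $(c_{1-},c_{1+})=(\partial\tilde\Phi(c_-),\partial\tilde\Phi(c_+))$.

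The main technical point — the step I expect to be the real obstacle — is justifying that $\tilde\Phi\circ\gamma$ has well-defined endpoints equal to $\partial\tilde\Phi(\gamma_\pm)$ and lies a bounded Hausdorff distance from the geodesic joining them. This is standard for lifts of surface homeomorphisms (it is exactly how $\partial\tilde\Phi$ is constructed: $\tilde\Phi$ is an equivariant quasi-isometry of $\DD^2$, hence extends continuously to the boundary and moves each geodesic a bounded distance), but one must be slightly careful that the relevant quasi-isometry constants, and the constants $C$ in~(\ref{eq_bounded1})–(\ref{eq_bounded2}), are genuinely uniform and do not depend on the particular center leaf. All of this is $\pi_1S$-equivariant and the quotient surface is compact, so compactness gives the needed uniformity; I would spell out that reduction but leave the routine hyperbolic-geometry estimate (bounded-distance-to-a-geodesic for the image of a geodesic under a quasi-isometry) as standard, citing~\cite{CB} or~\cite{Cal}.
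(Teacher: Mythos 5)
Your proposal is correct and follows essentially the same route as the paper: shadow $c$ by its geodesic $\gamma$, apply $\tilde f$, use the bounded distance between $\tilde f$ and $D\tilde\Phi$ (the paper uses~(\ref{eq_bounded1}) upstairs in $T^1\DD^2$ where you project first and use~(\ref{eq_bounded2}) — an immaterial difference), and conclude via the defining property of $\partial\tilde\Phi$ that $\tilde\Phi(\gamma)$ has boundary points $(\partial\tilde\Phi(c_-),\partial\tilde\Phi(c_+))$. The quasi-geodesic step you flag as the main obstacle is exactly what the paper compresses into ``by the definition of $\partial\tilde\Phi$,'' and your uniformity remarks are consistent with the paper's setup.
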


\begin{proof}
Let $\gamma$ be the geodesic which shadows $c$. Then $\gamma$ and $c$ have the same boundary points $(c_-,c_+)$. Applying $\tilde f$, we have that $(c_1, \vec c_1)$ is bounded distance away from $\tilde f(\gamma, \gamma')$. By~(\ref{eq_bounded1}), $\tilde f(\gamma,\gamma')$ is bounded distance away from $D\tilde\Phi(\gamma,\gamma')=(\tilde\Phi(\gamma), \tilde\Phi(\gamma)')$. But $\tilde\Phi(\gamma)$ has boundary points $(\partial\tilde\Phi(c_-), \partial\tilde\Phi(c_+))$ by the definition of $\partial\tilde\Phi$.
\end{proof}

\subsection{Proof of Theorem~\ref{t.incoherent} and the Addendum} 

Let $f$ be a partially hyperbolic diffeomorphism and let $\varphi\in\cM(S)$ be the mapping class as in Theorem~\ref{t.incoherent} or the Addendum with $f\in\iota(\varphi)$. First we pass to positive iterates of $f$ and $\varphi$ so that Proposition~\ref{prop_geodesic} applies and yields a representative $\Phi$ of $\varphi$ such that the lift $\tilde \Phi$ preserves a geodesic $\gamma$. Further, $\gamma$ belongs to the unstable lamination and, hence, is {\it coarsely contracting} (cf. Remark~\ref{rem_terminology}), \ie for all sufficiently long segments $[\gamma(a),\gamma(b)]\subset \gamma(\RR)$ we have
$$
d_{\DD^2}(\tilde\Phi(\gamma(a)),\tilde \Phi(\gamma(b)))\le \lambda d_{\DD^2}(\gamma(a),\gamma(b)),
$$
where $\lambda<1$ and depends only on the mapping class, see \eg~\cite[Section 1.7.4]{Cal}.

Next assume that $f$ is dynamically coherent. By choosing sufficiently small $\eta>0$ we have that the rest of the assumptions of Theorem~\ref{t.uniqueshadowing} are also satisfied. By Corollary~\ref{c.shadowingpartial}, there exists a unique center leaf $(c,\vec c)\colon \RR\to T^1\DD^2$ which is shadowed by $\gamma$. Claim~\ref{claim_center_dynamics} implies that $(c,\vec c)$ is preserved by $\tilde f$. Using~(\ref{eq_bounded2}) and the fact that $\gamma$ is coarsely contracting under $\tilde\Phi$, it is easy to show that $(c,\vec c)$ is coarsely contracting under $\tilde f$. Hence, $\tilde f$ has a fixed point $x\in (c, \vec c)$.

Recall that the center foliation is obtained by taking the intersection of the center-stable and the center-unstable foliations. Consider the center-unstable leaf $L$ which contains $(c,\vec c)$. Because $E^c\oplus E^{ss}$ is transverse to the circle fibers of $T^1S$, the restriction of the bundle map $\pi|_L\colon L\to\DD^2$ is a diffeomorphism. Note that $\pi$ sends a center leaves to its underlying curve.  Pick a point $y\in L$ on the local stable manifold of $x$. Consider the sequence $\{\tilde f^n(y), n\ge 0\}$ and corresponding sequence of center leaves which contain these points $\{(c_n, \vec c_n); n\ge 0\}\subset L$. Because $\pi|_L$ is a diffeomorphism, $c_n\colon\RR\to\DD^2$ are all distinct and therefore have distinct pairs of boundary points $(c_{n-},c_{n+})$, $n\ge 0$. Each $c_n$ partitions $\DD^2$ into two connected components: one containing $\pi(x)$ and the other one containing $\pi(\tilde f^m(y))$ for $m<n$. It follows that one of the sequences $\{c_{n-}, n\ge 0\}$ and $\{c_{n+}, n\ge 0\}$ is non-increasing and the other one is non-decreasing (we orient $\partial\DD^2$ counter-clockwise). For the sake of concreteness, we can assume that $\{c_{n-}, n\ge 0\}$ is non-increasing and that $\{c_{n+}, n\ge 0\}$ is non-decreasing (see Figure~\ref{fig:2}). Let
$$
\ell_-=\lim_{n\to\infty} c_{n-},\,\,\,\,\, \ell_+=\lim_{n\to\infty} c_{n+}
$$ 

Also let $c_\infty$ be the center leaf determined by $(\ell_-,\ell_+)$.

\begin{figure}[ht]
\vspace{-0.5cm}
\begin{center}
\includegraphics[scale=0.9]{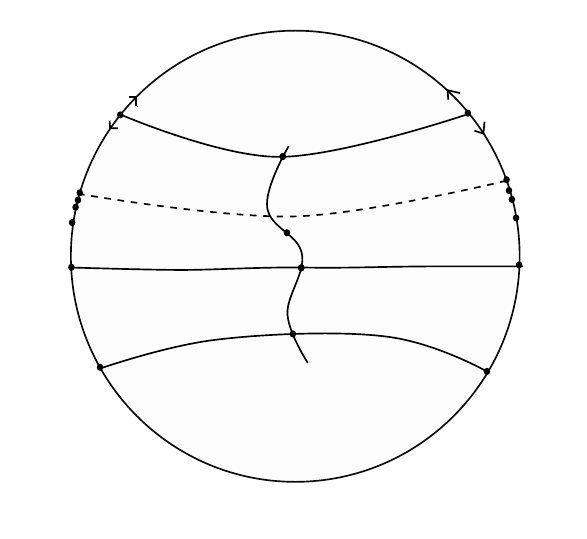}
\begin{picture}(0,0)
\put(-10,180){\small{$L$}}
\put(-48,183){\small{$c_+$}}
\put(-221,183){\small{$c_-$}}
\put(-184,179){\small{$(c,\vec c)$}}
\put(-131,169){\small{$x$}}

\put(-127,106){\small{$\tilde f(y)$}}
\put(-137,80){\small{$y$}}

\put(-34,153){\small{$\ell_+$}}
\put(-236,153){\small{$\ell_-$}}
\put(-100,152){\small{$(c_\infty,\vec c_\infty)$}}

\put(-30,113){\small{$c_{2,+}$}}
\put(-242,113){\small{$c_{2,-}$}}

\put(-41,70){\small{$c_{1,+}$}}
\put(-232,70){\small{$c_{1,-}$}}

\end{picture}
\end{center}
\vspace{-0.5cm}
\caption{\small{Depiction of the elements in the proof.}\label{fig:2}}
\end{figure}

The pairs $(c_{n-}, c_{n+})$ are all distinct and, hence at least one of these sequences does not stabilize at the limit point, say, $\{c_{n-}, n\ge 0\}$ does not stabilize. Recall that by Claim~\ref{claim_center_dynamics}, we have $c_{n-}=\partial\tilde\Phi^n(c_{0-})$. Hence $\ell_-$ is a contracting fixed point for $\partial\tilde\Phi$ (at least on one side). By Proposition~\ref{prop_geodesic}, $c_-$ and $c_+$ are expanding fixed points. Hence, $\ell_-\neq c_-$ and consequently $c_\infty$ and $c$ are disjoint. By construction, $c_\infty$ separates $c$ from the sequence $\{c_n; n\ge 0\}$. In particular, we have for all $n\ge0$
$$
d_{\DD^2}(\pi(x),\pi(\tilde f^ny))\ge d_{\DD^2}(\pi(x),c_\infty)=\rho>0,
$$
which yields a contradiction as $\tilde f^ny$ lies on the stable manifold of $x$.\hfill$\square$

\begin{rema}
 The proof can be extended to rule out a weaker version of dynamical coherence. Namely, we show that there does not exist an invariant foliation $\cF^{cu}$ tangent to $E^{c}\oplus E^{uu}$. Indeed, using a leaf of the center stable branching foliation (see~\cite{BI})\footnote{Notice that it is always possible to consider a ``lowermost" center-stable leaf which will be fixed by $\tilde f$.} and assuming that there is an invariant foliation tangent to $E^{c}\oplus E^{uu}$ one can use the same argument because the argument takes place inside a center-stable leaf. 
  \end{rema} 


\subsection{Further remarks and extensions}

Using the branching foliations of Burago-Ivanov~\cite{BI},
some of the intermediate results in the proof above may be established in 
the dynamically incoherent setting.
In particular, we obtain information about complete curves tangent to the center distribution as well as their coarse dynamics. 

Putting together Theorem \ref{t.globalshadowing}, the ideas in Theorem \ref{t.uniqueshadowing} and a key result from \cite{BI} we obtain the following.

\begin{prop}\label{p.shadowingpartial}
For every $\eps>0$ there exists $\delta>0$ such that if $f: T^1S \to T^1S$ is a partially hyperbolic diffeomorphism whose splitting is $\delta$-close to that of $\cG^t$, then every $c:\mathbb{R} \to T^1S$ tangent to $E^c_f$ and parametrized by arc-length is $\eps$-shadowed by a unique orbit of $\cG^t$. Moreover, for every orbit $\gamma$ of $\cG^t$ there is a unique center curve which is $\eps$-shadowed by $\gamma$. 
\end{prop}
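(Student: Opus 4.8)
The plan is to reprove Corollary~\ref{c.shadowingpartial} in the possibly incoherent setting by replacing the genuine center, center-stable and center-unstable foliations with the branching foliations $\cF^{cs}_{bran}$ and $\cF^{cu}_{bran}$ of Burago--Ivanov~\cite{BI}: these are tangent to $E^{cs}_f$ and $E^{cu}_f$, are $f$-invariant, consist of complete surfaces and do not cross each other topologically, and their existence for every partially hyperbolic diffeomorphism of a closed $3$-manifold is the ``key result from~\cite{BI}'' referred to in the statement. As in the proofs of Theorems~\ref{t.uniqueshadowing} and~\ref{t.globalshadowing}, all objects are lifted to $T^1\DD^2$.

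The first, elementary, step is existence and uniqueness of the shadowing orbit of a given center curve. If $c\colon\RR\to T^1S$ is tangent to $E^c_f$ and parametrised by arclength, then $c'(t)$ is a unit vector in $E^c_f(c(t))$; since $E^c_f$ is $\delta$-close to $\RR Z$, after orienting $c$ appropriately one gets $\|c'(t)-Z(c(t))\|<\omega(\delta)$ for all $t$, with $\omega(\delta)\to 0$ as $\delta\to 0$. Thus $c$ is an $\omega(\delta)$-pseudo-orbit of $\cG^t$, and the standard shadowing lemma (Theorem~\ref{t.shadow}) gives, for $\delta$ small enough, a unique orbit of $\cG^t$ that $\eps$-shadows $c$. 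This already yields the ``unique orbit'' half of both assertions.

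Next I would prove injectivity: distinct center curves are shadowed by distinct geodesics. This amounts to running the proof of Theorem~\ref{t.uniqueshadowing}, whose only use of dynamical coherence was to place each center curve inside a horizontal center-unstable leaf, projecting diffeomorphically to $\DD^2$ on the cover and thereby giving the comparison~(\ref{eq_pi_bound}). In the incoherent case one uses instead a branching leaf $L\in\cF^{cu}_{bran}$ through a point of the curve: by~\cite[Section~5.2]{HaPS} the branching foliations of any partially hyperbolic diffeomorphism of $T^1S$ are horizontal, so $\tilde L$ is a graph over $\DD^2$, $\pi|_{\tilde L}$ is a diffeomorphism and~(\ref{eq_pi_bound}) holds verbatim. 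The remaining ingredients of that proof --- the reduction, via the local product structure, to two center curves contained in one $L$; the exponential divergence of points joined by a strong unstable arc; and the Poincar\'e--Bendixson argument inside the disc $\tilde L$ --- then carry over, using that the branching leaves do not topologically cross and that $\tilde f$ permutes them.

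Finally, surjectivity --- every orbit of $\cG^t$ $\eps$-shadows some center curve --- is the step that requires Theorem~\ref{t.globalshadowing}, and I expect it to be the main obstacle, since $E^c_f$ is not assumed uniquely integrable and so cannot be fed to that theorem directly. The plan is to build an auxiliary genuine one-dimensional foliation $\cF_0$ directed by a vector field $C^0$-close to $Z$ --- for instance as the intersection of genuine foliations $\eps$-approximating $\cF^{cs}_{bran}$ and $\cF^{cu}_{bran}$, such approximations again being available from~\cite{BI} --- so that $\cF_0$ is tangent to a line field $\eps$-close to $E^c_f$, hence to $\RR Z$. Theorem~\ref{t.globalshadowing} applied to $\cF_0$ then gives a continuous surjection sending each $\cF_0$-leaf onto an orbit of $\cG^t$ and meeting every orbit, so every geodesic $\eps$-shadows some $\cF_0$-leaf; repeating the injectivity argument, the shadowing assignment is in fact a bijection between $\cF_0$-leaves and orbits. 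What remains --- the real work --- is to match $\cF_0$-leaves with genuine integral curves of $E^c_f$ so that the shadowing geodesics agree; the natural route is to parametrise both families by the endpoint pair in $\partial\DD^2$ of the geodesic shadowing them (as in Section~\ref{ss.centerleafdyn}) and to check the two parametrisations have the same image, \ie that every endpoint pair realised by an $\cF_0$-leaf is realised by a genuine center curve. Granting this, surjectivity follows; one must of course keep the various smallness parameters (the $\delta$'s of Theorems~\ref{t.globalshadowing} and~\ref{t.uniqueshadowing}, the closeness of $\cF_0$ to $E^c_f$, and of the branching leaves to the horizontal) compatible throughout.
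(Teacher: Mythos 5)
Your treatment of the first two assertions follows the paper's route: the observation that an arc-length parametrised integral curve of $E^c_f$ is a $\delta$-pseudo-orbit of $Z$ plus Theorem~\ref{t.shadow} gives existence and uniqueness of the shadowing geodesic, and injectivity is obtained by rerunning Theorem~\ref{t.uniqueshadowing} with the Burago--Ivanov surfaces tangent to $E^{cs}_f$ and $E^{cu}_f$ standing in for the genuine center-(un)stable leaves. That part is correct and matches the paper.

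The surjectivity step, however, has a genuine gap, and you flag it yourself. Fixing a single auxiliary foliation $\cF_0$ whose tangent line field is merely close to $E^c_f$ and applying Theorem~\ref{t.globalshadowing} only shows that every geodesic shadows a leaf of $\cF_0$; those leaves are not tangent to $E^c_f$, and the proposed ``matching via endpoint pairs at infinity'' is not an argument: proving that every endpoint pair realised by an $\cF_0$-leaf is realised by a genuine integral curve of $E^c_f$ is essentially a restatement of the surjectivity you are trying to establish. The paper closes this gap with a limiting argument over the whole sequence of approximations rather than a single one. By \cite[Key Lemma]{BI} one obtains integrable plane fields $E_n$, $F_n$ converging to $E^{cs}_f$ and $E^{cu}_f$, whose intersection foliations $\cF^c_n$ have tangent line fields converging to $E^c_f$ (with \cite[Theorem 7.2]{BI} controlling the geometry of the leaves in $T^1\DD^2$). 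For each large $n$, Theorem~\ref{t.globalshadowing} produces a leaf $\ell_n$ of $\cF^c_n$ that is shadowed by the given geodesic $\gamma$ with uniform constants; extracting a subsequence, the $\ell_n$ converge uniformly to a curve which, as a limit of curves tangent to line fields converging to $E^c_f$, is itself tangent to $E^c_f$ and is still $\eps$-shadowed by $\gamma$. That compactness-and-passage-to-the-limit step is exactly what is missing from your write-up; without it the second half of the proposition is not proved.
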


\begin{proof} If the center bundle $E^c_f$ integrates to an invariant foliation then the result is a direct consequence of Theorems~\ref{t.globalshadowing} and~\ref{t.uniqueshadowing} (which in particular give that the center bundle should be uniquely integrable).

Now consider the case when $E^c_f$ does not integrate to a foliation. Then the existence of a unique orbit of $\cG^t$ shadowing a given center curve is given by Theorem \ref{t.globalshadowing} and the fact that no two center curves are shadowed by the same orbit of $\cG^t$ is given by Theorem \ref{t.uniqueshadowing}. Indeed, in Theorem \ref{t.uniqueshadowing} one does not actually need that the diffeomorphism $f$ is dynamically coherent: the one important point where dynamical coherence was used is to reduce to the case when the two center curves which were shadowed by the same orbit of the flow were in the same center unstable leaf (or center stable leaf). This can be done in absence of dynamical coherence by appealing to the fundamental results of \cite{BI}. In particular,~\cite[Proposition 3.1]{BI} implies that saturating by strong stable (resp. unstable) manifolds gives a surface tangent to the center-stable bundle (resp. center-unstable). Then the main technical result of~\cite{BI} provides branching foliations which can be used to replace the center-stable and center unstable leaves.

To show that every orbit $\gamma$ of $\cG^t$ shadows some center curve, we apply \cite[Key Lemma]{BI} to approximate $E^s_f \oplus E^c_f$ and $E^c_f \oplus E^u_f$ by integrable distributions $E_n$, $F_n$. Moreover, \cite[Theorem 7.2]{BI} implies that the integral leaves of $E_n$ are pairwise at bounded distance in $T^1\mathbb{D}$ (and similarly for $F_n$). The intersection gives a one-dimensional foliation $\mathcal{F}^c_n$ and as $n\to \infty$ the leaves converge uniformly to center curves. By applying Theorem~\ref{t.globalshadowing}, we have that for all sufficiently large $n$ there exists a unique leaf $\ell_n$ in $\mathcal{F}^c_n$ which is shadowed by $\gamma$. As $n\to \infty$ the curves $\ell_n$ converge to the posited center leaf.  
\end{proof}

\begin{figure}[ht]
\vspace{-0.5cm}
\begin{center}
\includegraphics[scale=0.5]{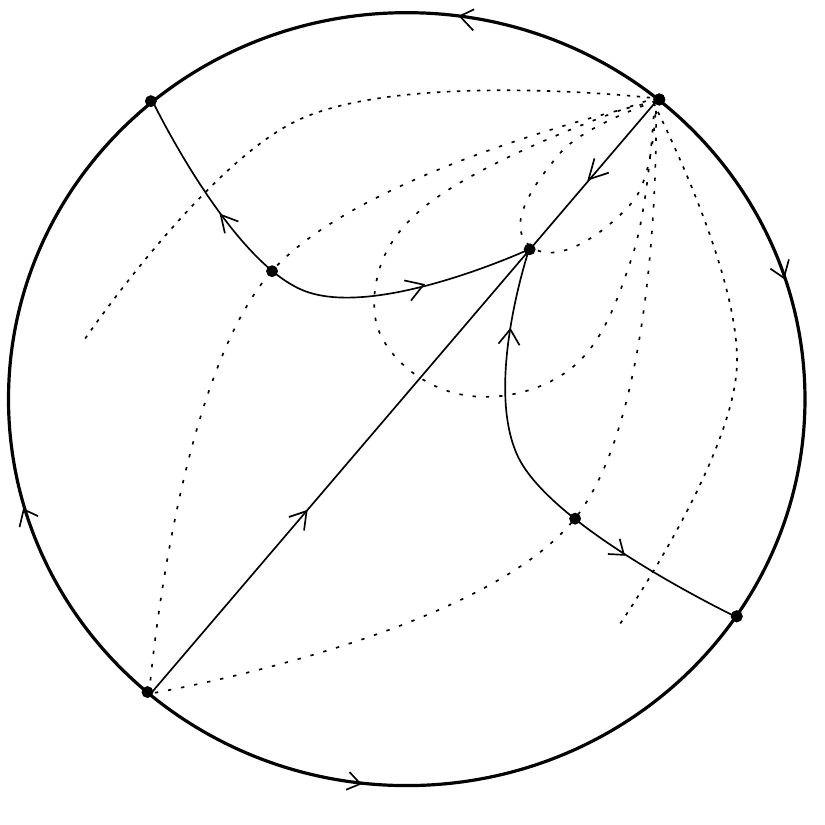}
\begin{picture}(0,0)

\end{picture}
\end{center}
\vspace{-0.5cm}
\caption{\small{Plausible behavior on a periodic $cs$-leaf associated to a regular periodic point of the pseudo-Anosov. The dotted lines correspond to the strong stable manifolds.}\label{fig:3}}
\end{figure}

Using this improvement, one can obtain the following.

\begin{coro}\label{c.noncomplete} Let $f$ be a diffeomorphism which satisfies the assumptions of Theorem~\ref{t.incoherent}. Then there exists a center leaf $c$ such that the union of the stable manifolds through $c$ does not form a complete surface.  \end{coro}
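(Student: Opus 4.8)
The plan is to show that the fixed center leaf already singled out in the proof of Theorem~\ref{t.incoherent} has incomplete stable saturation; the point is that the contradiction reached in that proof was obtained \emph{from} the completeness of a center-stable surface, so the same mechanism should obstruct completeness of $W^s(c)$ for that leaf $c$ in the incoherent setting.

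First I would recall the setup of the proof of Theorem~\ref{t.incoherent}. After replacing $f$ by a positive iterate --- which changes neither the hypotheses nor completeness of stable saturations --- Proposition~\ref{prop_geodesic} provides a representative $\Phi$ of $\varphi$ with a lift $\tilde\Phi$ fixing a geodesic $\gamma$ of the lifted unstable lamination whose ideal endpoints $c_-,c_+$ are distinct expanding fixed points of $\partial\tilde\Phi$. By Proposition~\ref{p.shadowingpartial}, which crucially does \emph{not} require dynamical coherence, $\gamma$ $\eps$-shadows a unique center curve $(c,\vec c)$ of $\tilde f$; and by the argument of Claim~\ref{claim_center_dynamics} (which only uses this shadowing and the bound~(\ref{eq_bounded1}), not coherence) the leaf $(c,\vec c)$ is $\tilde f$-invariant and contains a fixed point $x$.

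Now consider $W^s(c)=\bigcup_{x'\in c}W^{ss}(x')$, the union of the stable manifolds through $c$: it is an injectively immersed $C^1$ surface tangent to $E^{cs}$, it is $\tilde f$-invariant (since $\tilde f(c)=c$), it contains $c$ and each $W^{ss}(x')$ with $x'\in c$, and it is saturated by strong stable manifolds. Suppose for contradiction that its image in $T^1S$ is complete; then $W^s(c)$ is complete as well, and since the partially hyperbolic splitting is $\eta$-close to that of $Z$ the bundle $E^{cs}$ is uniformly transverse to the circle fibres, so the bundle map restricts to a diffeomorphism $\pi\colon W^s(c)\to\DD^2$ with uniformly bounded differential and inverse. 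I would then run the endgame of the proof of Theorem~\ref{t.incoherent} inside $L:=W^s(c)$, in place of the genuine center-stable leaf used there: pick $y$ on the local strong stable manifold of $x$, so that $y\in L$ and $\tilde f^n(y)\to x$; the center curves $c_n$ through $\tilde f^n(y)$ lie in $L$ and, by Claim~\ref{claim_center_dynamics}, their ideal endpoints converge to a contracting fixed point of $\partial\tilde\Phi$ distinct from $c_-$ and $c_+$; the resulting limit center curve $c_\infty$ then separates $x$ from all the $\tilde f^n(y)$ inside $L\cong\DD^2$, so $d_{\DD^2}(\pi(x),\pi(\tilde f^n(y)))\geq\rho>0$ for every $n$, contradicting $\tilde f^n(y)\to x$. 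Hence $W^s(c)$ is not complete, which proves the Corollary.

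The delicate point --- and the expected main obstacle --- is justifying that this endgame may legitimately be run inside $L=W^s(c)$: one needs that the center curves through the points $\tilde f^n(y)$ remain in $L$ (\ie that a complete strong-stable-saturated surface tangent to $E^{cs}$ is center-saturated), and that the identification of such center curves with pairs of ideal endpoints, together with the $\partial\tilde\Phi$-equivariance of Claim~\ref{claim_center_dynamics}, stays valid there. Since $f$ is only assumed partially hyperbolic --- so $E^{cs}$ is merely continuous and, by Theorem~\ref{t.incoherent}, non-integrable --- this is exactly where one must bring in the Burago--Ivanov branching foliations~\cite{BI}, as is already done for Proposition~\ref{p.shadowingpartial}: one replaces $L$ by a ``lowermost'' center-stable branching leaf $\mathcal L$ through $c$ and the center curves in it by its branching center leaves, observes that $W^s(c)\subset\mathcal L$ because strong stable manifolds lie inside center-stable branching leaves, and transfers the completeness hypothesis into the statement $W^s(c)=\mathcal L$. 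The remaining ingredients --- the bi-Lipschitz comparison of the metric induced on such a surface from $T^1S$ with the hyperbolic metric of $\DD^2$, and the elementary fact that a proper non-empty open subset of $\DD^2$ is incomplete --- are routine.
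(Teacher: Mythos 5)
Your proposal is correct and follows essentially the same route as the paper: take the $\tilde f$-invariant center leaf $c$ shadowed by the $\tilde\Phi$-invariant geodesic, use Burago--Ivanov to see that $W^s(c)$ is a surface tangent to $E^{ss}\oplus E^c$ transverse to the fibres, note that completeness would make $\pi$ restricted to its lift a covering of $\DD^2$, and then derive a contradiction from the boundary dynamics of $\partial\tilde\Phi$ exactly as in the endgame of Theorem~\ref{t.incoherent}. The paper phrases the final contradiction slightly more directly (it picks a single center curve in $\tilde W^s(c)$ whose ideal endpoints straddle the attracting fixed point, rather than rerunning the whole sequence $c_n$), but this is the same mechanism, and the delicate point you flag --- that the argument must be run inside a branching center-stable leaf containing $W^s(c)$ --- is precisely what the paper's citation of \cite{BI} is meant to cover.
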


\begin{proof}
Let $c$ be the center leaf constructed in the proof of Theorem~\ref{t.incoherent}, \ie the center leaf which is shadowed by the geodesic which corresponds to two repelling fixed point on the boundary. Let $W^s(c)=\bigcup_{y\in c} W^{ss}(y)$. According to~\cite{BI} one knows that  $W^s(c)$ is a surface tangent to $E^{ss}_f \oplus E^c_f$. Hence its lift $\tilde W^s(c)$ to $T^1\mathbb{D}^2$ is transverse to the circle fibers. By a similar argument as in Theorem~\ref{t.incoherent}, we see that $\tilde W^s(c)$ cannot be a covering of $\mathbb{D}^2$. Indeed, one can choose a center leaf in $\tilde W^s(c)$ whose endpoints leave the attracting fixed point at infinity in between; the fact that this center curve belongs to $\tilde W^s(c)$ gives a contradiction with the dynamics induced in the boundary. In particular, it cannot be complete as a surface in $T^1S$.  
\end{proof}

See Figure~\ref{fig:3} for a possible behavior inside certain periodic center-stable leaf. Finally, we make some comments on the dynamics of the center curves related to~\cite[Problem 7.26]{BDV}.

\begin{rema}\label{rem.centerleafdyn}
Notice that the considerations about coarse dynamics of center leaves made in Section~\ref{ss.centerleafdyn} all apply to the center curves even when the diffeomorphism is dynamically incoherent. In particular, using the dynamics at infinity of the mapping class one can simultaneously create center curves which are contracting and expanding at infinity (non-compact center leaves of stable and unstable type in the nomenclature of~\cite[Problem 7.26]{BDV}). Further,  one can also show the following facts:
\begin{itemize}
\item If the mapping class $\Phi$ is pseudo-Anosov, then there are no closed periodic center curves. 
\item When two center curves merge as shown in Figure~\ref{fig:3}, one can see that there is an invariant center curve with coarse saddle-node behaviour and fixed points. 
\end{itemize} 
\end{rema}

\subsection{Minimality of the strong foliations}\label{ss.minimal}
In~\cite{BDU} it was shown that amongst robustly partially hyperbolic diffeomorphisms on $3$-manifolds, those for which one of the two strong foliations is minimal form an open and dense subset. The proof of~\cite{BDU} is done by performing certain $C^1$ perturbations and robust transitivity is used to avoid checking that these perturbations are still transitive. Because all the perturbations made in~\cite{BDU} (see also \cite[Section 7.3.3]{BDV}) can be made conservative\footnote{The perturbations needed are: the closing lemma, $C^1$-connecting lemma (\cite{BC}) and creation of blenders (\cite{BD,HHTU}).}, the same proof immediately applies to yield the following statement. There exists an open set $\cU$ of conservative partially hyperbolic diffeomorphisms which are stably ergodic, satisfy the conclusions of Theorem~\ref{t.incoherent} and for which either the strong stable foliation or the strong unstable foliation is minimal.

In our setting we can use the specific knowledge on the dynamics on center leaves (see Remark \ref{rem.centerleafdyn}) together with the arguments of \cite{BDU} to show the following stronger result. 

\begin{prop}\label{p.minimal} 
Let $f$ be a diffeomorphism which satisfies the assumptions of Theorem~\ref{t.incoherent}. Then, there exists a small conservative perturbation $\hat f$ of $f$ and a $C^1$-neighborhood $\cU$ of $\hat f$ such that for every $g \in \cU$ the strong stable and unstable foliations are both minimal. In particular, $\hat f$ is stably ergodic and robustly transitive. 
\end{prop}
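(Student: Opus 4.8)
The plan is to run the perturbation scheme of \cite{BDU} in the volume-preserving category and then use the coarse dynamics on center leaves described in Remark~\ref{rem.centerleafdyn} to upgrade ``one strong foliation is minimal'' to ``both are minimal''. To begin, note that every hypothesis we rely on is $C^1$-robust: the conclusions of Theorem~\ref{t.incoherent} (absolute partial hyperbolicity, proximity of the bundles to those of $\cG^t$, robust transitivity, robust dynamical incoherence) persist on a $C^1$-neighborhood of $f$, and so do the shadowing statement of Proposition~\ref{p.shadowingpartial} and the identification of the coarse center dynamics with the boundary action $\partial\tilde\Phi$ via Claim~\ref{claim_center_dynamics}. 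Hence we may freely replace $f$ by any sufficiently small conservative $C^1$-perturbation. As recalled just before the statement, the tools used in \cite{BDU} --- the conservative closing lemma, the conservative $C^1$-connecting lemma of \cite{BC}, and the conservative creation of blenders (\cite{BD,HHTU}) --- are all available, and robust transitivity removes the need to re-verify transitivity after each perturbation. This yields a conservative $\hat f$, $C^1$-close to $f$, and a $C^1$-neighborhood $\cU$ of $\hat f$ such that for every $g\in\cU$ one of the two strong foliations, say $\cF^{uu}_g$, is minimal; in particular $\hat f$ (and every $g\in\cU$) is accessible and hence, exactly as noted before the statement, stably ergodic and robustly transitive. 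It therefore only remains to show, after possibly shrinking $\cU$, that $\cF^{ss}_g$ is also minimal for every $g\in\cU$.

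For this, fix $g\in\cU$ and let $V^s$ be the set of points of $T^1S$ whose strong stable leaf is dense. It is open, $g$-invariant and saturated by strong stable leaves, so by connectedness of $T^1S$ it suffices to prove that $V^s$ is nonempty and closed. Nonemptiness is provided by the \cite{BDU} machinery (a hyperbolic periodic point, sitting in a blender, whose strong stable manifold is robustly dense). Closedness is the crucial point, and this is where the center-leaf dynamics enters. Assume $V^s\ne T^1S$; then its frontier $\Lambda=\overline{V^s}\setminus V^s$ is a nonempty, closed, $g$-invariant, $\cF^{ss}_g$-saturated proper subset. Lift everything to $T^1\DD^2$. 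By Proposition~\ref{p.shadowingpartial} each center leaf is shadowed by a unique geodesic, and by Claim~\ref{claim_center_dynamics} the coarse dynamics of $\tilde g$ on the space of center leaves is conjugate to the action of $\partial\tilde\Phi$ on pairs of boundary points; by Proposition~\ref{prop_geodesic} this action possesses a source-type fixed point --- a pair of expanding fixed points --- corresponding to the distinguished fixed center curve $c_*$ produced in the proof of Theorem~\ref{t.incoherent}. I would then argue that, because strong stable leaves are expanded by $\tilde g^{-1}$ and their projections to $\DD^2$ stay a bounded distance from, and spread along, center curves, the closure of any strong stable leaf must accumulate on $c_*$; the source-like coarse dynamics at $c_*$ then forces $\overline{W^{ss}_g(x)}$ to contain a complete strong unstable leaf of $g$. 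Since $\cF^{uu}_g$ is minimal, that leaf is dense, whence $\overline{W^{ss}_g(x)}=T^1S$ for every $x$, contradicting $\Lambda\ne\varnothing$. Therefore $V^s=T^1S$ and $\cF^{ss}_g$ is minimal; together with minimality of $\cF^{uu}_g$ and the observations of the first paragraph, this gives every conclusion of the proposition.

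The main obstacle is precisely the closedness step, i.e.\ converting the \emph{coarse} (at-infinity) attraction of center leaves toward the source curve $c_*$ into the \emph{metric} statement that the closure of a strong stable leaf engulfs a complete strong unstable leaf. This is the place where one must combine the uniform shadowing of Proposition~\ref{p.shadowingpartial}, the Poincar\'e--Bendixson type rigidity inside center-unstable leaves used in the proof of Theorem~\ref{t.uniqueshadowing}, and the branching-foliation technology of \cite{BI} to control how $\cF^{ss}_g$ sits with respect to the (branching) center-unstable leaves, exactly in the spirit of the incoherent refinements of Section~\ref{ss.centerleafdyn}. Everything else is a routine transcription of \cite{BDU} into the conservative, robustly transitive setting, together with the standard implications (minimality of a strong foliation) $\Rightarrow$ accessibility $\Rightarrow$ stable ergodicity, and (minimality of a strong foliation) $\Rightarrow$ robust transitivity.
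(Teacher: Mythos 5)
Your proposal diverges from the paper's argument at the decisive point, and the step on which your whole scheme rests is not actually proved. The paper does not first establish minimality of one strong foliation and then bootstrap to the other; it gets both \emph{simultaneously and symmetrically}. The specific input from Remark~\ref{rem.centerleafdyn} that the paper uses is not the source-type fixed center curve $c_*$ but the existence of a \emph{saddle-node center leaf}, which carries two periodic points $p$ and $q$ of complementary stable indices ($2$ and $1$). Stable ergodicity (which is part of the construction, via \cite[Proposition 2.1]{BDU}) makes $W^s(p)$ and $W^u(q)$ robustly dense among conservative perturbations; proximity of \emph{both} strong foliations to the minimal ones of the geodesic flow makes $W^s(p)$ a $u$-section and $W^u(q)$ an $s$-section; conservative blenders at $p$ and $q$ (\cite{HHTU}) then make $W^{uu}(p_g)$ and $W^{ss}(q_g)$ robustly dense, and the section property converts this into minimality of both strong foliations for conservative $g$, exactly as in \cite[Proposition 4.1]{BDU}. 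A final application of \cite{PS} (every unstable leaf of bounded length meets a blender) upgrades this to minimality for \emph{all} $C^1$-perturbations, conservative or not --- a step your proposal omits entirely, even though the statement quantifies over all $g\in\cU$.

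The gap in your route is the ``closedness'' step that you yourself flag: the claim that coarse attraction of center curves toward $c_*$ at infinity forces $\overline{W^{ss}_g(x)}$ to contain a complete strong unstable leaf. Nothing in Proposition~\ref{p.shadowingpartial} or Claim~\ref{claim_center_dynamics} controls closures of strong stable leaves; the boundary dynamics only constrains center curves up to bounded distance, and bounded-distance (or $\eps$-dense for a fixed $\eps$) is far from dense. No mechanism is given for why a strong stable leaf should accumulate on $c_*$, nor why accumulation on a coarsely repelling center curve would produce a full unstable leaf in the closure; in the paper that role is played by the blender, not by the dynamics at infinity. There is also a circularity in your first paragraph: you invoke robust transitivity to run the \cite{BDU} scheme, but robust transitivity is a \emph{conclusion} of this proposition, not a hypothesis of Theorem~\ref{t.incoherent}; the paper avoids this precisely by replacing robust transitivity with conservativity of the perturbations (closing lemma, connecting lemma, blender creation) together with stable ergodicity. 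Finally, minimality of one strong foliation does not by itself yield accessibility, so that implication chain at the end also needs justification.
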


\begin{proof} 
Without loss of generality we can assume that $f$ has dense periodic points all of which are hyperbolic (see e.g. \cite{BC}). 

From stable ergodicity of $f$ we know that there is a neighborhood $\cU_0$ of $f$ such that for every volume preserving $g \in \cU_0$ and every periodic point $p$ of $g$ of stable index $2$ the stable manifold $W^s(p)$ is dense in $T^1S$ and symmetrically for the unstable manifolds of a periodic point $q$ of $g$ of stable index $1$ (see \cite[Proposition 2.1]{BDU}, notice that the proof there does not use any perturbation, just that $f$ is transitive and partially hyperbolic). 

Take $p,q$ to be periodic points of $f$ which belongs to a saddle node center leaf (see \S \ref{ss.centerleafdyn}) and are respectively the first and the last periodic points in that leaf. For the sake of concreteness assume that $p$ has stable index $2$ and $q$ has stable index $1$. Using the fact that the strong stable and strong unstable foliations are close to those of the geodesic flow which are minimal, we can assume that every strong stable leaf of some length $L>0$ intersects the unstable manifold of $q$ while every strong unstable manifold of length $L$ intersects the stable manifold of $p$. In the terminology of \cite{BDU}, this means that the stable manifold of $p$ is a $u$-section and the unstable manifold of $q$ is a $s$-section (see \cite[Section 3]{BDU}).  

One can create blenders associated to $p$ and $q$ by conservative perturbation of $f$ (see \cite{HHTU}). This implies that in a neighborhood $\cU_1 \subset \cU_0$ of $f$ we have that for every $g\in \cU_1$ we know that the strong unstable manifold of $p_g$ activates the blender of $q_g$ and vice versa. In particular, we obtain for every $g \in \cU_1$ which preserves volume that the strong stable manifold of $q_g$ and the strong unstable of $p_g$ are dense in $T^1S$ (see~\cite[Proposition 2.6]{BDU}). The fact that every strong stable manifold of length $L$ intersects the unstable manifold of $p_g$ implies minimality of the strong unstable foliation for each conservative $g\in \cU_1$ and the same argument gives minimality of the strong stable foliation (for more details, see \cite[Proposition 4.1]{BDU}). 

It follows from minimality of the strong unstable foliation that there exists $L>0$ such that every strong unstable manifold intersects a blender. This implies that each conservative $g \in \cU_1$ is in the hypothesis of the main theorem of \cite{PS}. As the other strong stable foliation is minimal, this implies that it is robustly minimal even among perturbations which may not be conservative. The same argument applies to the strong unstable foliation and completes the proof. 
\end{proof}

\begin{rema}
Notice that this implies that the set of branching points for the center curves is dense in $T^1S$. 
\end{rema}

\begin{rema} 
This argument applies to the other examples we have constructed in Theorem~\ref{t.geodesic} thanks to Remark~\ref{rema-circle} (see also the discussions in \cite{BDU} and \cite[Section 7.3.3]{BDV}). 
\end{rema}

{\small \emph{Acknowledgments:} The fourth author would like to thank Thomas Barthelem\'e, Sergio Fenley and Steven Frankel for very helpful discussions.}


\begin{thebibliography}{BCDG}


\bibitem[Ba]{Barbot} T. Barbot, Mise en position optimale de tores par rapport à un flot d'Anosov,  \emph{Comment. Math. Helv.} {\bf 70} no. 1 (1995) 113--160.

\bibitem[BBY]{BBY} F. Beguin, C. Bonatti,  B. Yu, Building Anosov flows on 3-manifolds, \emph{Geom. Topol.} {\bf 21} (2017), no. 3, 1837?1930. 

\bibitem[BC]{BC} C. Bonatti, S. Crovisier, R\'ecurrence et g\'en\'ericit\'e, \emph{Invent. Math.} {\bf 158} (2004), 33-104. 

\bibitem[BD]{BD} C. Bonatti, L. D\'iaz,  Persistent Transitive Diffeomorphisms,\emph{ Annals of Mathematics } {\bf 143}(2) (1996) 357-396.

\bibitem[BDU]{BDU} C. Bonatti, L. D\'iaz, R. Ures, Minimality of strong stable and unstable foliations for partially hyperbolic diffeomorphisms. \emph{J. Inst. Math. Jussieu} {\bf 1} no.4 (2002) 513--541.

\bibitem[BDV]{BDV} C. Bonatti, L. D\'iaz and M. Viana, \emph{Dynamics Beyond Uniform Hyperbolicity. A global geometric and probabilistic perspective}, Encyclopaedia of Mathematical Sciences {\bf 102}. Mathematical Physics III. Springer-Verlag (2005).

\bibitem[BL]{BL} C. Bonatti, R. Langevin, Un exemple de flot d'Anosov transitif transverse \`a un tore et non conjugu\'e \`a une suspension, \emph{ Ergod. Theory and Dynam. Sys.} {\bf 14} (1994) p. 633-643. 

\bibitem[BF]{BonattiFranks} C. Bonatti, J. Franks,  A H\"{o}lder continuous vector field tangent to many foliations. \emph{ Modern dynamical systems and applications}, 299--306, Cambridge Univ. Press, Cambridge, 2004.

\bibitem[BMVW]{BMVW} C. Bonatti, C. Matheus, M. Viana, A. Wilkinson, Abundance of stable ergodicity, \emph{Comment. Math. Helv.} {\bf 79} (2004) 753--757.

\bibitem[BG]{BG} C. Bonatti, N. Guelman, Axiom A diffeomorphisms derived from Anosov flows. \emph{J. Mod. Dyn.} {\bf 4} no.1 (2010),  1--63. 

\bibitem[BGP]{BGP} C. Bonatti, A. Gogolev, R. Potrie,Anomalous partially hyperbolic diffeomorphisms II: stably ergodic examples. \emph{ Invent. Math.} {\bf 206} no.3 (2016) 801--836.

\bibitem[BPP]{BPP} C. Bonatti, K. Parwani, R. Potrie, Anomalous partially hyperbolic diffeomorphisms I: dynamically coherent examples, \emph{Annales Sci. ENS} {\bf 47} 6 (2016).

\bibitem[BW]{BW} C. Bonatti, A. Wilkinson, Transitive partially hyperbolic diffeomorphisms on 3-manifolds, \emph{Topology} {\bf 44} (2005), no. 3, 475-508.

\bibitem[BZ]{BZ} C. Bonatti, J. Zhang, Transverse foliations on the torus $T^2$ and partially hyperbolic diffeomorphisms on 3-manifolds, \emph{Comment. Math. Helv}.  {\bf 92} (2017), no. 3, 513--550.


\bibitem[BBI]{BBI} M. Brin, D. Burago, S. Ivanov, On partially hyperbolic diffeomorphisms of $3$-manifolds with commutative fundamental group, \emph{ Modern
dynamical systems and applications,} Cambridge Univ. Press, Cambridge (2004) 307--312.

\bibitem[BBI]{BBI-coherence} M. Brin, D. Burago, S. Ivanov, Dynamical coherence of partially hyperbolic diffeomorphisms of the 3-torus. \emph{J. Mod. Dyn.} {\bf 3} no.1 (2009) 1--11.

\bibitem[BI]{BI}  D. Burago and S. Ivanov, Partially hyperbolic diffeomorphisms of 3-manifolds with abelian fundamental groups. \emph{Journal of Modern Dynamics} {\bf 2} (2008) 541–580.

\bibitem[Bri]{Brin} M. Brin, On dynamical coherence, \emph{ Ergodic Theory Dynam. Systems} {\bf 23} no.2 (2003) 395--401.

\bibitem[Bru]{Br}  M. Brunella, Separating the basic sets of a nontransitive Anosov flow, \emph{Bull. London Math. Soc.} {\bf 25} 5 (1993) 487-490.

\bibitem[BuW]{BW-coherence} K. Burns, A. Wilkinson,  Dynamical coherence and center bunching. \emph{Discrete Contin. Dyn. Syst.} {\bf 22} no. 1-2 (2008) 89--100.


\bibitem[Cal]{Cal} D. Calegari,  Foliations and the geometry of 3-manifolds. Oxford Mathematical Monographs. Oxford University Press, Oxford, 2007. xiv+363 pp.

\bibitem[CM]{CM} R.D. Canary, D. McCullough, Homotopy equivalences of 3-manifolds and deformation theory of Kleinian groups. \emph{Mem. Amer. Math. Soc.} {\bf 172} (2004), no. 812, xii+218 pp. 

\bibitem[CB]{CB} A. Casson, S. Bleiler,  Automorphisms of surfaces after Nielsen and Thurston. London Mathematical Society Student Texts, 9. Cambridge University Press, Cambridge, 1988. iv+105 pp. 

\bibitem[CHHU]{CHHU} P. Carrasco,  F. Rodriguez Hertz, M.A. Rodriguez Hertz, R. Ures, Partially hyperbolic dynamics in dimension 3, \emph{Ergodic Theory Dynam. Systems} {\bf 38} (2018), no. 8, 2801--2837. 

\bibitem[FM]{FaMa} B. Farb, D. Margalit,  A primer on mapping class groups. Princeton Mathematical Series, 49. Princeton University Press, Princeton, NJ, 2012. xiv+472 pp.


\bibitem[HP$_1$]{HP} A. Hammerlindl, R. Potrie, Classification of partially hyperbolic diffeomorphisms in three dimensional manifolds with solvable fundamental group, \emph{Journal of Topology}  {\bf 8} 3 (2015) 842--870.

\bibitem[HP$_2$]{HP-survey} A. Hammerlindl, R. Potrie, Partial hyperbolicity and classification: a survey,  \emph{Ergodic Theory Dynam. Systems} {\bf 38} (2018), no. 2, 401--443.

\bibitem[HaPS]{HaPS} A. Hammerlindl, R. Potrie, M. Shannon, Seifert manifolds admitting partially hyperbolic diffeomorphisms,  \emph{J. Mod. Dyn.}  {\bf 12} (2018), 193?222. 

\bibitem[HT$_1$]{HT} M. Handel, W. Thurston, Anosov flows on new three manifolds, {\it Invent. Math.} {\bf 59} (1980), no. 2, 95--103.


\bibitem[HT$_2$]{HTh} M. Handel, W. Thurston, New proofs of some results of Nielsen. {\it Adv. in Math.} 56 (1985), no. 2, 173--191.


 \bibitem[HHTU]{HHTU} F. Rodriguez Hertz, M.A. Rodriguez Hertz, A. Tahzibi, R. Ures, Creation of blenders in the conservative setting, \emph{Nonlinearity} {\bf 23} 2 (2010) 211--223.

\bibitem[HHU$_1$]{HHU} F. Rodriguez Hertz, M.A. Rodriguez Hertz, R. Ures, Accessibility and stable ergodicity for partially hyperbolic diffeomorphisms with 1d-center bundle, \emph{Inventiones Math.} {\bf 172} (2008) 353-381.


\bibitem[HHU$_2$]{HHU2} F. Rodriguez Hertz, M.A. Rodriguez Hertz, R. Ures, A non-dynamically coherent example on $T^3$. \emph{Ann. Inst. H. Poincaré Anal. Non Linéaire} {\bf 33} no. 4 (2016) 1023--1032

\bibitem[HPS]{HPS} M. Hirsch, C. Pugh and M. Shub, Invariant Manifolds,  \emph{Springer Lecture Notes in Math.}, {\bf 583} (1977).

\bibitem[KH]{KH} A. Katok, B. Hasselblatt, Introduction to the modern theory of dynamical systems. With a supplementary chapter by Katok and Leonardo Mendoza. Encyclopedia of Mathematics and its Applications, 54. Cambridge University Press, Cambridge, 1995. xviii+802 pp.

\bibitem[Mat]{Matsumoto} S. Matsumoto, Some remarks on foliated $S^1$ bundles. \emph{Invent. Math.} {\bf 90} (1987), no. 2, 343--358. 

\bibitem[McC]{McC} D. McCullough, Virtually geometrically finite mapping class groups of 3-manifolds. \emph{J. Differential Geom.} {\bf 33} (1991), no. 1, 1--65.


\bibitem[M]{M} R.T. Miller,  Geodesic laminations from Nielsen's viewpoint. \emph{Adv. in Math.} {\bf 45} (1982), no. 2, 189--212.

\bibitem[Jo]{Johannson} K. Johannson, Homotopy equivalences of 3-manifolds with boundary, \emph{Springer Lecture Notes in Math.}, {\bf 761} (1979).

\bibitem[Pot]{Pot} R. Potrie, A few remarks on partially hyperbolic diffeomorphisms isotopic to Anosov in $\mathbb{T}^3$, \emph{Journal of Dynamics and Differential Equations} {\bf 26} no.3 (2014) 805--815.

\bibitem[PS]{PS} E. Pujals, M.Sambarino, A sufficient condition for robustly minimal foliations. \emph{Ergodic Theory Dynam. Systems} {\bf 26} 1 (2006) 281--289.

\bibitem[Shi]{Shi} Y. Shi, Perturbations of partially hyperbolic  automorphisms on Heisenberg nilmanifold, \emph{Thesis} (2014) Pekin-Dijon. http://www.theses.fr/2014DIJOS048

\bibitem[Th]{Th} W. Thurston, On the geometry and dynamics of diffeomorphisms of surfaces. {\it Bull. Amer. Math. Soc. (N.S.)} 19 (1988), no. 2, 417--431.

\bibitem[Wa]{Waldhausen} F. Waldhausen, On Irreducible 3-manifolds which are sufficiently large, \emph{Ann. of Math} (2) {\bf 87} (1968) 56--88.

\bibitem[Wi]{Wilkinson} A. Wilkinson, Conservative partially hyperbolic dynamics, \emph{2010 ICM Proceedings} (2010).

\end{thebibliography}
\end{document}